\newtheorem{theorem}{Theorem}[section]
\newtheorem{corollary}[theorem]{Corollary}
\newtheorem{lemma}[theorem]{Lemma}
\newtheorem{definition}[theorem]{Definition}
\newtheorem{claim}[theorem]{Claim}
\def\b1K{\mbox{\boldmath $K$}_{-1}}
\def\bK{\mbox{\boldmath $K$}}
\newbox\noforkbox \newdimen\forklinewidth
\noforkbox\hbox{\lower 2pt\box1\lower
2pt\box0\relax}
\newcommand{\NN}{\mathbb{N}}
\def\sub'm{\prec_{\bK'}}
\def\grpf #1 #2{{\rm grp}_{#2}(#1)}
\def\fldf #1 #2{{\rm fld}_{#2}(#1)}
\def\dclf #1 #2{{\rm dcl}_{#2}(#1)}
\def\rclf #1 #2{{\rm rcl}_{#2}(#1)}
\def\aclf #1 #2{{\rm acl}_{#2}(#1)}
\def\acff #1 #2{{\rm acf}_{#2}(#1)}
\def\strf #1 #2{{\rm str}_{#2}(#1)}
\def\tclf #1 #2{{\rm acf}_{#2}(#1)}
\def\hbar{{\bf h}}
\date{\today}
\newcommand{\K}{\mathcal{K}}
\newcommand{\F}{\mathcal{F}}
\newcommand{\A}{\mathcal{A}}
\newcommand{\B}{\mathcal{B}}
\newcommand{\G}{\mathcal{G}}
\newcommand{\T}{\mathcal{H}}
\newcommand{\D}{\mathcal{D}}
\newcommand{\C}{\mathcal{C}}
\newcommand{\I}{\mathcal{I}}
\newcommand{\J}{\mathcal{J}}
\newcommand{\W}{\mathcal{W}}
\newcommand{\Z}{\mathcal{Z}}
\newcommand{\Hc}{\mathcal{H}}
\newcommand{\HH}{\mathcal{H}}
\newcommand{\Lc}{\mathcal{L}}
\newcommand{\ve}{\varepsilon}
\newcommand{\ex}{\mathrm{ex}}
\newcommand\MI{\mathrm{Int}}
\newcommand{\floor}[1]{\left\lfloor #1 \right\rfloor}
\newtheorem{prob}{Problem}
\xpatchcmd{\proof}{\itshape}{\normalfont\proofnamefont}{}{}
\newcommand{\proofnamefont}{}
\renewcommand{\proofnamefont}{\bfseries}
\newcommand{\forb}{{\mathrm{ forb}}}
\newcommand{\Forb}{{\mathrm{Forb}}}
\newcommand{\cross}{\sigma}
\newcommand{\ckl}{C^{(k)}_\ell}
\newcommand{\tkl}{T^{(k)}_\ell}
\newcommand{\gnpk}{G(n,p)^{(k)}}
\begin{document}

\title{On the number of $\HH$-free hypergraphs}

\author{Tao Jiang \footnote{Dept. of Mathematics, Miami University, Oxford, OH 45056, USA, {\tt jiangt@miamioh.edu}. Research supported
by the National Science Foundation grant DMS-1855542.  }\and Sean Longbrake \footnote{Dept. of Mathematics, Emory University,  Atlanta, GA 30322, USA {\tt sean.longbrake@emory.edu}} }

\maketitle

\begin{abstract}
Two central problems in extremal combinatorics are concerned with estimating the number $\ex(n,\HH)$, the size of the largest $\HH$-free hypergraph on $n$ vertices, and the number ${\rm forb}(n,\HH)$ of $\HH$-free hypergraph on $n$ vertices. It is well known that $\forb(n,\HH)=2^{(1+o(1))\ex(n,\HH)}$ for $k$-uniform hypergraphs that are not $k$-partite. In a recent breakthrough, Ferber, McKinley, and Samotij proved that for many $k$-partite (or {\it degenerate}) hypergraphs $\Hc$, $\forb(n, \Hc) = 2^{O(\ex(n, \Hc))}$. However, there are few known instances of degenerate hypergraphs $\HH$ for which $\forb(n,\HH)=2^{(1+o(1))\ex(n,\HH)}$ holds.


    In this paper, we show that $\forb(n,\HH)=2^{(1+o(1))\ex(n,\HH)}$ holds for a wide class of degenerate hypergraphs
known as $2$-contractible hypertrees. This is the first known infinite family of degenerate hypergraphs $\HH$ for which
$\forb(n,\HH)=2^{(1+o(1))\ex(n,\HH)}$ holds.
As a corollary of our main results, we obtain a sharp estimate of
$\forb(n,C^{(k)}_\ell)=2^{(\floor{\frac{\ell-1}{2}}+o(1))\binom{n}{k-1}}$ for the $k$-uniform linear $\ell$-cycle,
for all pairs $k\geq 5, \ell\geq 3$, thus settling a question of 
Balogh, Narayanan, and Skokan  affirmatively for all $k\geq 5, \ell\geq 3$. Our methods also lead to some sharp results on the related random Tur\'an problem.\\

As a key ingredient of our proofs,  we develop a novel supersaturation variant of the delta systems method for set systems, which may be of independent interest.

\end{abstract}

\section{Introduction}

\subsection{History}
In extremal combinatorics, the problem of enumerating the number of discrete structures that avoid given substructures
has a very rich history. One of the most natural questions one may ask is as follows: given a fixed graph $H$, how many $n$-vertex (labelled) graphs are there that contain no copy of $H$? Formally, given a fixed graph $H$, we say that a graph $G$ is {\it $H$-free} if it does not contain $H$ as 
a subgraph. For each natural number $n$, we let $\Forb(n,H)$ denote the family of all labeled $H$-free graphs on
the vertex set $[n]:=\{1,\dots, n\}$ and let $\forb(n,H)=|\Forb(n,H)|$. The problem is to determine or estimate $\forb(n,H)$.
This function is closely related to another classic function studied in extremal graph theory, namely
the {\it extremal number} $\ex(n,H)$, defined as the maximum number of edges in an $n$-vertex $H$-free graph. Indeed,
if we take a maximum $n$-vertex $H$-free graph $G$ and take all the subgraphs of it we get $2^{\ex(n,H)}$ many $H$-free graphs. 
On the other hand, for each $0\leq i \leq \ex(n,H)$ there are at most $\binom{\binom{n}{2}}{i}$ many $n$-vertex $H$-free graphs
with $i$ edges. Hence, we trivially have
\begin{equation}\label{graph-trivial}
2^{\ex(n,H)}\leq \forb(n,H)\leq \sum_{i\leq \ex(n,H)} \binom{\binom{n}{2}}{i}=n^{O(\ex(n,H))}.
\end{equation}
For non-bipartite graphs $H$, the upper bound in \eqref{graph-trivial} was significantly sharpened by Erd\H{o}s, Frankl, and R\"odl \cite{EFR}, extending the earlier seminal work of Erd\H{o}s, Kleitman, and Rothschild \cite{EKR-free} for complete graphs, showing that
$\forb(n,H)\leq 2^{\ex(n,H)+o(n^k)}$. Therefore, for any non-bipartite $H$,
\begin{equation} \label{graph-tight}
\forb(n,H)=2^{(1+o(1))\ex(n, H)}. 
\end{equation}
For bipartite graphs $H$, estimating $\forb(n,H)$ is much more difficult, even for the few bipartite $H$ whose extremal numbers $\ex(n,H)$ 
are relatively well-understood. The first breakthrough in this area was made by Kleitman and Winston \cite{KW}, who showed that the number of $C_4$-free graphs was $2^{O(\ex(n, C_4))}$. For complete bipartite graphs, this was extended by Balogh and Samotij first for symmetric \cite{BS-sym} and then asymmetric \cite{BS-asym} versions, showing that the number of $K_{s, t}$-free graphs is no more than $2^{O(n^{2 - 1/ s})}$, a near optimal result for $t$ sufficiently large compared to $s$. 
For even cycles, this was extended by Morris and Saxton \cite{MS} in a breakthrough work, showing that the number of $C_{2 \ell}$-free graphs is no more than $2^{O(n^{1 + \frac{1}{\ell}})}$. 
In a more recent breakthrough, Ferber, McKinley, and Samotij \cite{FMS} showed that for all bipartite graphs satisfying a 
mild condition (see their Theorem 5), $\forb(n,H)=2^{O(\ex(n, H))}$ holds. 
Until recently, it was believed that in fact $\forb(n,H)=2^{(1+o(1))\ex(n,H)}$ should hold for all bipartite graphs that contain a cycle
just as it does for non-bipartite $H$. But this was disproved by Morris and Saxton \cite{MS}, who showed that 
$\forb(n,C_6)\geq 2^{(1+c)\ex(n,C_6)}$ for some positive $c$. Thus, unlike for non-bipartite graphs, for bipartite $H$, the trivial lower bound of $2^{\ex(n,H)}$ is not always asymptotically tight.

One may consider the natural extension of the problem to the setting of uniform hypergraphs. For an integer $k\geq 2$,
a {\it $k$-uniform hypergraph} (or $k$-graph) is pair $(V,E)$ of finite sets, where the {\it edge set} $E$ is a family of
$k$-element subsets of the {\it vertex set} $V$. For a fixed $k$-graph $\HH$, one defines the extremal number $\ex(n,\HH)$ and $\forb(n,\HH)$
analogously as in the graph setting. As for graphs, for any $k$-graph $\HH$, we trivially have
\begin{equation}\label{hypergraph-trivial}
2^{\ex(n,\HH)}\leq \forb(n,\HH)\leq \sum_{i\leq \ex(n,\HH)} \binom{\binom{n}{k}}{i}=n^{O(\ex(n,\HH))}.
\end{equation}
A $k$-graph $\HH$ is {\it $k$-partite} (or {\it degenerate}) if its vertex can be partitioned into $k$ parts $X_1,\dots, X_k$
so that each edge contains exactly one vertex in each part. We call $(X_1,\dots, X_k)$ a {\it $k$-partition} of $\HH$.
It is easy to see that when $\HH$ is not $k$-partite, $\ex(n,\HH)=\Theta(n^k)$.
When $\HH$ is $k$-partite, it follows from a result of Erd\H{o}s \cite{Erdos-complete-partite} that $\ex(n,\HH)=O(n^{k-c})$, for some constant $c>0$.
Extending the work of Erd\H{o}s, Frankl and R\"odl \cite{EFR},
Nagle, R\"odl, and Schacht \cite{NRS} showed that for any fixed $k$-graph $\HH$,
$\mathrm{forb}(n, \HH) \leq 2^{\ex(n, \HH) + o(n^k)}$. Hence, for any $k$-graph $\HH$ that is not $k$-partite, we have
\begin{equation} \label{hypergraph-tight}
\forb(n,\HH)=2^{(1+o(1))\ex(n, \HH)}. 
\end{equation}

When $\HH$ is $k$-partite, however, the bound $\forb(n,\HH)\leq 2^{\ex(n,\HH)+o(n^k)}$ becomes too weak.
To improve this trivial upper bound for $k$-partite $k$-graphs, a natural approach is to first
study the problem for some prototypical examples of $k$-partite $k$-graphs.
It follows from the work of Balogh et al \cite{BDDLS} on the typical structures of $t$-intersecting families
that $\forb(n,\F_{k,t})=2^{(1+o(1))\ex(n,\F_{k,t})}$,  where $\F_{k,t}$ is the family of $k$-graphs each of which consists of
two edges sharing at least $t$ vertices in common. (Their actual estimate is in fact even sharper than stated,
see their Theorem 1.4). Mubayi and Wang \cite{MW} investigated $\forb(n,C^{(k)}_{\ell})$ for $k\geq 3$, where
$C^{(k)}_\ell$ is the so-called {\it $k$-uniform  linear cycle} of length $\ell$ which is the $k$-graph obtained from a graph
$\ell$-cycle by expanding each edge with $k-2$ degree $1$ vertices. It follows from the work of F\"uredi and Jiang \cite{FJ-cycles}
and of Kostochka, Mubayi and Verstra\"ete \cite{KMV-path-cycle} that for all $k,\ell\geq 3, \ex(n,C^{(k)}_\ell)\sim 
\floor{\frac{\ell-1}{2}} \binom{n}{k-1}$. Mubayi and Wang \cite{MW} 
showed that $\forb(n, C^{(3)}_\ell)=2^{O(n^{k-1})}$
for all even $\ell\geq 4$ and further conjectured that $\forb(n,C^{(k)}_\ell)=2^{O(n^{k-1})}$ for all $k,\ell\geq 3$.
Their conjecture was subsequently settled by Balogh, Narayanan and Skokan \cite{BNS}, who then posed the natural question
of whether $\forb(n, C^{(k)}_\ell)=2^{(1+o(1))\ex(n,C^{(k)}_\ell)}$ holds for all $k,\ell\geq 3$.
More recently, in the same paper mentioned earlier, Ferber, McKinley, and Samotij \cite{FMS} established the very general result that  for all $k$-partite
$k$-graphs $H$ that satisfy a mild condition (see their Theorem 9)
\begin{equation}
\forb(n,\HH)\leq 2^{O(\ex(n,\HH))},
\end{equation}
thus significantly sharpening the trivial upper bound in \eqref{hypergraph-trivial}, while also retrieving the  results
of Mubayi and Wang and of Balogh, Narayan and Skokan on $\ex(n,C^{(k)}_\ell)$.
In spite of this remarkable progress, it remains an intriguing question whether the bounds can be further sharpened,
and in particular, in view of \eqref{hypergraph-tight} and the question of Balogh, Narayanan and Skokan \cite{BNS}
whether there exists some nontrivial infinite family of $k$-partite $k$-graphs
for which $\forb(n,\HH)=2^{1+o(1))\ex(n,\HH)}$ holds. 

In this paper, we establish a large and the first known family of degenerate $k$-graphs,
for which $\forb(n,\HH)=2^{(1+o(1))\ex(n,\HH)}$ holds.  
As an immediate consequence of our main result, we settled
the question of Balogh, Narayanan and Skokan \cite{BNS} in the affirmative for all $k\geq 5, \ell \geq 3$.

\begin{theorem}  \label{cycles}
For all integers, $k\geq 5, \ell\geq 3$, we have
\[\forb(n,C^{(k)}_\ell)= 2^{(1+o(1)) \floor{\frac{\ell-1}{2}} \binom{n}{k-1}}.\]
\end{theorem}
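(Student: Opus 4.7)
The lower bound $\forb(n,\ckl) \geq 2^{(1+o(1))\floor{\frac{\ell-1}{2}}\binom{n}{k-1}}$ is immediate from \eqref{hypergraph-trivial} together with the known asymptotic $\ex(n,\ckl)\sim \floor{\frac{\ell-1}{2}}\binom{n}{k-1}$ due to F\"uredi-Jiang and Kostochka-Mubayi-Verstra\"ete, so the only task is the matching upper bound.

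My plan for the upper bound is the now-standard container-plus-supersaturation route. The key analytic input to establish is a \emph{sharp} supersaturation lemma: for every $\ve>0$ there is $\delta=\delta(\ve,k,\ell)>0$ such that every $n$-vertex $k$-graph $G$ with $|E(G)|\geq (1+\ve)\floor{\frac{\ell-1}{2}}\binom{n}{k-1}$ contains at least $\delta \cdot n^{\ell(k-1)}$ copies of $\ckl$. Once this is in hand, I would feed it into the hypergraph container method (Balogh-Morris-Samotij / Saxton-Thomason) to obtain a collection $\mathcal{C}$ of containers satisfying: (i) every $\ckl$-free $k$-graph on $[n]$ is a sub-hypergraph of some $C\in\mathcal{C}$, (ii) $|\mathcal{C}|\leq 2^{o(n^{k-1})}$, and (iii) each $C\in\mathcal{C}$ has at most $(1+\ve)\floor{\frac{\ell-1}{2}}\binom{n}{k-1}$ edges. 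Summing $2^{|E(C)|}$ over $\mathcal{C}$ and sending $\ve\to 0$ would then give $\forb(n,\ckl)\leq 2^{(1+o(1))\floor{\frac{\ell-1}{2}}\binom{n}{k-1}}$.

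The heart of the matter is the sharp supersaturation, which I would prove by deploying the paper's novel supersaturation variant of the delta systems method. The idea is to first pass to a $\Delta$-kernel of $G$, in the spirit of Frankl-F\"uredi, on which most $(k-1)$-shadow pairs support a uniformly large link, and then to exploit the fact that $\ckl$ can be built by appending a single edge to a linear path $P$, which is a $2$-contractible hypertree subject to the paper's main theorem on trees. Copies of $P$ in the kernel are already plentiful near the extremal threshold, and the surplus $\ve\floor{\frac{\ell-1}{2}}\binom{n}{k-1}$ edges, evenly distributed across $(k-1)$-subsets by the regularization, should supply $\Omega(n^{k-1})$ candidate linking edges per copy of $P$, compounding to the required $\Omega(n^{\ell(k-1)})$ copies of $\ckl$.

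The main obstacle is keeping the constant in the supersaturation \emph{exactly} $1+o(1)$ rather than $1+C$ for some absolute $C>0$; every classical delta-systems argument I am aware of loses a positive fraction of the extremal count at the regularization step, and only a sharp variant --- the paper's main technical contribution --- can drive $C\to 0$. The hypothesis $k\geq 5$ appears to enter precisely here, giving enough room inside the sunflower kernels for the iterative refinement to converge with no loss at each stage.
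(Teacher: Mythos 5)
The paper dispatches this in two lines as an immediate corollary of Theorem \ref{main}: for $k\geq 5$ there exists a $2$-contractible $k$-tree $\tkl$ containing $\ckl$ with $\sigma(\tkl)=\sigma(\ckl)=\floor{\frac{\ell+1}{2}}$, so every $\ckl$-free $k$-graph is automatically $\tkl$-free, giving $\forb(n,\ckl)\leq\forb(n,\tkl)=2^{(\floor{\frac{\ell-1}{2}}+o(1))\binom{n}{k-1}}$ by Theorem \ref{main}, while the lower bound is trivial. You miss this reduction entirely and instead propose to prove sharp balanced supersaturation for the cycle $\ckl$ itself and then run containers on that. That is a genuinely different, and substantially harder, route.

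The problem is that the paper's supersaturation machinery (Theorem \ref{supersat}, Lemmas \ref{lem:heavybalanced} and \ref{lem:balancedlight}) is built exclusively for $2$-contractible $k$-trees; a linear cycle is not a tree, so none of it applies directly to $\ckl$. Your sketch of embedding a linear path and then ``closing'' it with a surplus edge is where the argument breaks. First, the counting is off: a closing edge must contain \emph{both} endpoints of the path, so the number of candidate closing positions per path is $O(n^{k-2})$, not ``$\Omega(n^{k-1})$'', and the correct target for sharp supersaturation is $\Omega(n^{\ell(k-2)})$ copies of $\ckl$ (the random count at density $\Theta(1/n)$), not $\Omega(n^{\ell(k-1)})$. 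Second, and more fundamentally, nothing in your outline guarantees that the endpoints of the embedded paths have positive codegree in the host graph, so there is no reason any of the ``surplus'' edges close any of the paths; this is exactly why cycle supersaturation at the Tur\'an threshold is hard, and it is precisely the step the paper deliberately sidesteps. You correctly identify that keeping the constant at $1+o(1)$ is the crux, but the resolution is not a sharper kernel argument for cycles --- it is to avoid cycle supersaturation altogether, using the monotonicity of $\forb$ under subgraph containment and applying Theorem \ref{main} to the supertree $\tkl$.
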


To state our main results, we need a few definitions and history, which we detail next.

\subsection{Main Result}

First, we define hypertrees recursively as follows. A single edge $E$ is a hypertree.
In general, a hypergraph $\HH$ with at least two edges is a hypertree if there exists an edge $E$ such that
$\HH':=\HH\setminus E$ is a hypertree and there exists an edge $F$ in $\HH'$ 
such that $E\cap V(\HH')=E\cap F$; we call such an edge $E$ a {\it leaf edge} of $\T$ and call $F$ a {\it parent edge} of $E$
in $\HH'$. It follows from the definition above that if $\HH$ is a hypertree with $m\geq 2$ edges, there exists an ordering
of its edges as $E_1,\dots, E_m$ such that for each $i=2,\dots, m$, $\HH_i:=\{E_1,\dots, E_i\}$ is a tree and
$E_i$ is a leaf edge of $\HH_i$. We call such an edge-ordering a {\it tree-defining} ordering for $\HH$.
If $k$ is a positive integer, we will call a $k$-uniform hypertree a {\it $k$-tree}. 
As a simple example, a $k$-uniform matching is a $k$-tree.
It is easy to show by induction that every $k$-tree is $k$-partite.
Given a positive integer $t\leq k-1$, we say that a $k$-graph $\HH$ is {\it $t$-contractible} if each edge
of $\HH$ contains $t$ vertices of degree $1$ and a {\it $t$-contraction} of $\HH$ is the $(k-t)$-uniform muliti-hypergraph
obtained by deleting $t$ degree $1$ vertices from each edge of $\HH$. 
The notion of $t$-contractability is more general than that
of the {\it $k$-uniform expansion} of a graph. The latter has been quite extensively studied, as detailed in 
the survey by  Mubayi and Verstra\"ete \cite{MV-survey}. 
Specifically, the {\it $k$-expansion} of a graph $G$
is the  $k$-graph $G^{(k)}$ obtained from $G$ by expanding each edge into a $k$-set by adding $k-2$ degree $1$ vertices.
Thus, for instance, the $k$-uniform linear cycle $C^{(k)}_\ell$ is the $k$-expansion of a graph $\ell$-cycle.
Hence, the $k$-expansion of a graph $G$ is a $(k-2)$-contractible $k$-graph whose $(k-2)$-contraction is a simple
hypergraph with no repeated edges, whereas for a general $(k-2)$-contractible $k$-graph $\HH$, its $(k-2)$-contraction
is allowed to be a multigraph.

 Given a hypergraph $\HH$, a set $S\subseteq V(\HH)$ is called
a {\it cross-cut} of $\HH$ if each edge of $\HH$ intersects $S$ in exactly one vertex. If $\HH$ has a cross-cut then
we denote the minimum size of a cross-cut of it by $\sigma(\HH)$, and call it the {\it cross-cut number} of $\HH$.
Note that every $k$-partite $k$-graph $\HH$ has a cross-cut, for instance by taking any part in a $k$-partition of $\HH$.
Generalizing a long line of work \cite{Furedi-trees, FJ-cycles, FJS, KMV-path-cycle, KMV-trees} for $k$-graphs with $k\geq 5$, 
F\"uredi and Jiang \cite{FJ-trees} established the following sharp results on the extremal 
number of any subgraph of a $2$-contractible $k$-tree for $k\geq 5$.

\begin{theorem}[\cite{FJ-trees}] \label{FJ-trees}
Let $k\geq 4$ be an integer. For any $k$-graph $\HH$ that is a subgraph of a $2$-contractible $k$-tree, we have
\[\ex(n,\HH)=(\sigma(\HH)-1+o(1))\binom{n}{k-1}.\]
\end{theorem}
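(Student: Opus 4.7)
The plan is to establish matching lower and upper bounds of $(\sigma(\HH)-1+o(1))\binom{n}{k-1}$ on $\ex(n,\HH)$.

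For the \textbf{lower bound}, set $s=\sigma(\HH)$ and fix a set $S\subseteq[n]$ with $|S|=s-1$. Let $G$ be the $k$-graph on $[n]$ whose edges are exactly the $k$-subsets meeting $S$ in a unique vertex; then $|E(G)|=(s-1)\binom{n-s+1}{k-1}=(s-1+o(1))\binom{n}{k-1}$. If $\phi:\HH\hookrightarrow G$ were an embedding, each image $\phi(E)$ would meet $S$ in exactly one vertex, so $\phi^{-1}(S)\cap V(\HH)$ would be a cross-cut of $\HH$ of size at most $s-1$, contradicting $\sigma(\HH)=s$. Thus $G$ is $\HH$-free.

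For the \textbf{upper bound}, first observe that any subhypergraph of a 2-contractible $k$-tree is itself a 2-contractible $k$-forest (degree-1 vertices of the ambient tree persist under edge-deletion), and that $\sigma$ is additive across vertex-disjoint components, so it suffices to treat a connected 2-contractible $k$-tree $\HH$ and then embed components of a forest sequentially into disjoint parts of $V(G)$. Proceed by induction on the number $m$ of edges of $\HH$; the base case $m=1$ is trivial since $\sigma(\HH)=1$. For $m\geq 2$, fix a tree-defining ordering $E_1,\dots,E_m$, set $\HH'=\HH\setminus E_m$, let $F$ be the parent of $E_m$, and write $A=E_m\cap F$, a $(k-2)$-subset of $F$; the two vertices of $E_m\setminus A$ are degree-1 in $\HH$ and do not lie in $V(\HH')$.

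The heart of the argument is a delta-system-style cleaning. Given $G$ on $n$ vertices with $|E(G)|>(\sigma(\HH)-1+\varepsilon)\binom{n}{k-1}$, iteratively discard every edge containing some $(k-2)$-subset of current codegree below a threshold $d:=\varepsilon n/(3k)$. A standard ``blame'' argument bounds the total loss by $d\binom{n}{k-2}<\tfrac{\varepsilon}{2}\binom{n}{k-1}$, so the cleaned subgraph $G^{\ast}$ still satisfies $|E(G^{\ast})|>(\sigma(\HH)-1+\tfrac{\varepsilon}{2})\binom{n}{k-1}$ and has the property that every $(k-2)$-subset contained in some edge of $G^{\ast}$ has codegree at least $d$ in $G^{\ast}$. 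Because $\sigma(\HH')\leq\sigma(\HH)$, the inductive hypothesis applied to $G^{\ast}$ yields a copy of $\HH'$; let $F^{\ast}\in E(G^{\ast})$ be the image of $F$ and $A^{\ast}\subseteq F^{\ast}$ the image of $A$. Since $A^{\ast}$ has codegree $\geq d\gg k|V(\HH)|$ in $G^{\ast}$, we can select an edge $E_m^{\ast}\supseteq A^{\ast}$ whose two vertices outside $A^{\ast}$ avoid the image of $V(\HH')$, extending the embedding to a copy of $\HH$ in $G$.

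The \textbf{main obstacle} is the case when $\sigma(\HH')=\sigma(\HH)$: the inductive hypothesis on $\HH'$ is then only barely activated, and a single copy of $\HH'$ in $G^{\ast}$ may place $F$ at an edge whose $(k-2)$-hub has its codegree ``used up'' by vertices already in the image of $V(\HH')$. Overcoming this requires strengthening the induction to a supersaturation statement guaranteeing many copies of $\HH'$ in $G^{\ast}$, so that for at least one of them $A^{\ast}$ admits the two fresh extension vertices needed for $E_m$. This is precisely the supersaturation variant of the delta-system method highlighted in the abstract. The 2-contractibility hypothesis on $\HH$ is essential throughout: it ensures that each leaf edge contributes exactly two fresh degree-1 vertices --- neither more nor fewer --- which is precisely what a single high-codegree $(k-2)$-subset can accommodate.
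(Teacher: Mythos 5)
This theorem is cited from F\"uredi--Jiang \cite{FJ-trees}; the paper under review does not re-prove it but builds machinery (Theorem \ref{homogeneous}, Theorem \ref{centralization}, Theorem \ref{supersat}) that generalizes the underlying delta-system argument. Comparing your proposal to that machinery reveals a genuine gap in your upper-bound step.

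Your lower-bound construction is correct. For the upper bound, however, the naive codegree cleaning is not strong enough to carry out the extension. After cleaning at threshold $d=\varepsilon n/(3k)$, you know that the $(k-2)$-set $A^{\ast}$ has codegree at least $d$ in $G^{\ast}$, so its link $L=\Lc_{G^{\ast}}(A^{\ast})$ is a graph on $[n]\setminus A^{\ast}$ with at least $d$ edges. You then want an edge of $L$ disjoint from $T:=\phi(V(\HH'))$. But the number of $L$-edges meeting $T$ can be as large as $|T|\cdot\Delta(L)\le |V(\HH)|\cdot(n-1)$, which is of the same order as $d=\Theta(n)$ --- in fact larger, since $\varepsilon/(3k)\ll|V(\HH)|$. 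So the claim ``since $A^{\ast}$ has codegree $\geq d\gg k|V(\HH)|$ we can avoid $T$'' does not follow: $d$ is indeed much bigger than the constant $k|V(\HH)|$, but the obstruction scales with $n$, not with $|V(\HH)|$. Concretely, $L$ could have a single vertex $z$ of degree $\ge d$; if $z\in T$ (and in structures close to the extremal example one expects exactly this), every $L$-edge is blocked. Your codegree cleaning does nothing to prevent such a dominated link.

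You correctly flag that ``something more'' is needed, but you misidentify what. Producing many copies of $\HH'$ via supersaturation does not by itself resolve the issue: each copy of $\HH'$ could place $F^{\ast}$ at an edge whose $(k-2)$-projection $A^{\ast}$ has a link dominated by a vertex that the copy itself uses, and there is no counting argument in your proposal that bounds the number of such ``bad'' copies. The missing ingredient is a \emph{structural} control of the links, not just a numerical one. This is precisely what the intersection semilattice / delta-system machinery provides: the Super-homogeneous Subfamily Lemma (Theorem \ref{homogeneous} in this paper, and its predecessor in F\"uredi's work used in \cite{FJ-trees}) passes to a positive-proportion subfamily $\F'$ in which every intersection pattern $J\in\MI(\F')$ yields $s$-\emph{strong} projections --- i.e., the link of $F_J$ is $s$-diverse, meaning no single vertex covers more than a $1/s$ fraction of it. That diversity is exactly what lets one discard the at most $|V(\HH)|$ blocked vertices and still retain a positive fraction of the link for the extension; see the computation in Lemma \ref{lem:caseone}. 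Additionally, the dichotomy of Theorem \ref{centralization} (type 1 versus the ``centralized'' type 2) is needed to handle the case where the semilattice is not rich enough, reducing to a $(k-1)$-uniform problem after peeling off a bounded ``core'' $W$. None of this is recoverable from low-codegree cleaning alone. (A smaller issue: $|E_m\cap F|$ need not equal $k-2$; $2$-contractibility only guarantees $|E_m\cap F|\le k-2$, so the extension may involve links of higher uniformity, which only widens the gap above.)
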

In particular, Theorem \ref{FJ-trees} implies that for $k \geq 5$ and $\ell \geq 3$, 
$\ex(n,   C_\ell^{(k)}) = \floor{\frac{\ell - 1}{2}} \binom{n}{k - 1}$. Indeed, for $k\geq 5$, it is easy to see
that $C_\ell^{(k)}$ is a subgraph of a $k$-tree and $\cross(C_\ell^{(k)})=\floor{\frac{\ell+1}{2}}$
(see \cite{FJ-cycles} for instance for details).
The authors of \cite{FJ-trees} also demonstrated existence of $1$-contractible hypertrees for which the conclusion
is no longer valid.
The main method used in \cite{FJ-trees} is the so-called {\it Delta system method}, which is 
a powerful tool for studying extremal problems on set systems.

In this paper, we develop a supersaturation variant of the Delta system method and use it in conjunction with the container method
to establish the following sharp enumeration result.

\begin{theorem} [Main Theorem] \label{main}
Let $k\geq 4$ be an integer. For every $2$-contractible $k$-tree $\HH$, we have
\[\forb(n,\HH)= 2^{(\cross(\HH) - 1 +o(1))\binom{n}{k - 1}}.\]
\end{theorem}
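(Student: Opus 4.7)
The lower bound $\forb(n,\HH)\geq 2^{(\cross(\HH)-1+o(1))\binom{n}{k-1}}$ is immediate: by Theorem \ref{FJ-trees} there is an $\HH$-free $k$-graph $G^{*}$ on $[n]$ with $(\cross(\HH)-1+o(1))\binom{n}{k-1}$ edges, and every one of its $2^{|E(G^{*})|}$ subgraphs is $\HH$-free.

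The plan for the matching upper bound is the hypergraph container method, fed with a strong supersaturation input obtained from the new variant of the Delta system method. Concretely, we aim to build a family $\C$ of "containers," i.e.\ $k$-graphs on $[n]$, satisfying three properties: (i) every $\HH$-free $k$-graph on $[n]$ is a subgraph of some $C\in\C$; (ii) $\log_{2}|\C|=o(n^{k-1})$; and (iii) every $C\in\C$ has at most $(\cross(\HH)-1+o(1))\binom{n}{k-1}$ edges. Once these three properties are in place, we immediately conclude
\[
\forb(n,\HH)\ \leq\ |\C|\cdot 2^{\max_{C\in\C}|E(C)|}\ =\ 2^{(\cross(\HH)-1+o(1))\binom{n}{k-1}},
\]
and combining with the lower bound finishes the proof.

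To drive the container machinery (Saxton--Thomason or Balogh--Morris--Samotij) toward containers of size $\ex(n,\HH)+o(n^{k-1})$, we need a supersaturation statement of the following robust form: for every $\varepsilon>0$, any $k$-graph $G$ on $[n]$ with $e(G)\geq(\cross(\HH)-1+\varepsilon)\binom{n}{k-1}$ edges contains $\Omega_{\varepsilon}(n^{v(\HH)})$ copies of $\HH$, uniformly enough that the count survives the balanced deletion/selection steps of the container argument. This is where the promised supersaturation version of the Delta system method is deployed. Following the template of the Füredi--Jiang extremal argument (which established Theorem \ref{FJ-trees}), we first pass to a large "homogeneous" subfamily $\F\subseteq E(G)$ whose members form a sunflower-like kernel system — every edge has a common intersection pattern with the others. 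Because $\HH$ is a $2$-contractible $k$-tree, each edge of $\HH$ carries two degree-$1$ vertices, which translate in the kernel system into two "free" coordinates per edge; this freedom, exploited greedily through a tree-defining ordering $E_{1},\dots,E_{m}$ of $\HH$, is precisely what turns a single embedding into a polynomial count, since at each step $i\geq 2$ there are $\Omega(n^{2})$ valid placements for $E_{i}$ once the parent edge is fixed.

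The main obstacle is this supersaturation step. In the extremal setting it suffices to produce \emph{one} copy of $\HH$, whereas for sharp enumeration we must show that the number of copies is polynomially large, that this lower bound persists after any small perturbation of $G$, and that the constants produced by the kernel-reduction are independent of $n$. Upgrading the Füredi--Jiang embedding into a robust counting version — in particular, controlling the loss at each greedy extension and verifying that the two-degree-$1$ flexibility yields matching $n^{2}$ factors at every step of the tree-defining ordering — is the technical heart of the paper. Once this supersaturation lemma is proved, an iterated application of the container lemma (halving the excess $\varepsilon\binom{n}{k-1}$ at each round) produces the desired family $\C$ with $\log_{2}|\C|=o(n^{k-1})$ and container sizes bounded by $(\cross(\HH)-1+o(1))\binom{n}{k-1}$, which closes the argument.
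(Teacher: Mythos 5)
Your high-level strategy matches the paper's exactly: lower bound from the extremal construction behind Theorem~\ref{FJ-trees}, upper bound via the hypergraph container lemma fed by a balanced supersaturation lemma proved with a counting variant of the Delta system method. You also correctly identify the genuine technical bottleneck (converting Füredi--Jiang's single-embedding argument into a robust balanced count) and the role of the two degree-$1$ vertices per edge. However, two quantitative claims in your sketch are wrong in ways that would derail an attempted execution, and one major component of the argument is elided.

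First, the supersaturation bound you target is incorrect. You claim that when $e(G)\geq(\cross(\HH)-1+\varepsilon)\binom{n}{k-1}$ the graph contains $\Omega_{\varepsilon}(n^{v(\HH)})$ copies of $\HH$. This is not attainable: with only $\Theta(n^{k-1})$ edges one cannot hope for $n^{v(\HH)}$ copies. The correct (and optimal) count proved in Theorem~\ref{supersat} is $\Omega(a^{t}n^{s-t})$ where $s=v(\HH)$, $t=e(\HH)$, i.e.\ $\Omega(n^{v(\HH)-e(\HH)})$ --- this is the expected count in $G^{(k)}(n,p)$ at matching density. Similarly, the per-step factor in the greedy extension is not $\Omega(n^{2})$; once a parent edge's intersection $L$ (say $|L|=\ell_{i}$) is fixed, the number of admissible extensions is governed by $\delta_{\ell_{i}}(\F)$ and is $\Omega(n^{k-\ell_{i}-1})$, which for a $2$-contractible tree is at least $\Omega(n)$ but in general not $\Omega(n^{2})$. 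Since the container calculation (the choice of $\tau$ and the resulting $\log|\C|$) depends on these exponents precisely, aiming for $n^{v(\HH)}$ would cause the degree/codegree bookkeeping to fail.

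Second, you treat ``uniformly enough that the count survives the balanced deletion/selection steps'' as a side remark, but the balanced supersaturation (bounding $\Delta_{b}(\K)$ by $c\,\tau^{b-1}|\K|/|\F|$ for all $1\le b\le t$) is the heart of the paper, occupying all of Section~\ref{sec:balanced-supersaturation} via Lemmas~\ref{lem:heavybalanced} and~\ref{lem:balancedlight}. Moreover the paper's supersaturation argument rests on a structural dichotomy (Theorem~\ref{centralization}): either the super-homogeneous subfamily has an intersection pattern of ``type~1'' (making embedding easy), or the bulk of the edges pass through a bounded set $W$ with one vertex each, reducing the problem to counting copies of the link tree $\HH - R$ through $\sigma$-subsets of $W$. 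Your sketch does not reflect this case analysis, and without it the greedy argument alone does not give the sharp constant $\sigma(\HH)-1$ at the Turán threshold. These are the pieces you would need to supply to make the outline into a proof.
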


This gives the first known family of degenerate $k$-graphs $\HH$ for which $\forb(n,\HH)=2^{(1+o(1))\ex(n,\HH)}$ holds.

Furthermore, as a by-product of the supersaturation variant of the Delta-system method, we also get an optimal supersaturation result
for the family of $2$-contractible $k$-trees for $k\geq 4$, which may be of independent interest (see Theorem~\ref{supersat}).

As an immediate corollary of Theorem \ref{main}, we obtain Theorem \ref{cycles},
which answers the question of Balogh, Narayanan and Skokan \cite{BNS} affirmatively for all $k\geq 5,\ell\geq 3$.
Indeed, as mentioned earlier, $\cross(C^{(k)}_\ell)= \floor{\frac{\ell+1}{2}}$, and by considering $\ell$ even and $\ell$ odd cases 
separately it is not hard to construct a $k$-tree $\tkl$
with cross-cut number $\floor{\frac{\ell+1}{2}}$ that contains $\ckl$. Thus, by Theorem \ref{main},
$\forb(n, \ckl)\leq \forb(n, \tkl) \leq 2^{(\floor{\frac{\ell-1}{2}}+o(1))\binom{n}{k-1}}$, 
for all $k \geq 5$ and $\ell \geq 3$, from which Theorem \ref{cycles} follows.

\subsection{Applications to the random Tur\'an problem}
The methods used in establishing our main theorem also readily yield some sharp results on 
the so-called {\it random Tur\'an problem}. Let $k\geq 2$ be an integer. We let $G_{n, p}^{(k)}$ be the so-called
{\it Erd\H{o}s-R\'enyi random graph} formed by keeping each edge of $K_n^{(k)}$ uniformly at random with probability $p$. 
The random Tur\'an problem is to study the random variable $\ex(G_{n, p}^{(k)}, H)$ 
that counts the maximum number of edges in  an $H$-free subgraph of 
$G_{n,p}^{(k)}$. For a thorough introduction to the random Tur\'an problem, the reader is referred to the excellent survey by R\"odl and Schacht~\cite{RS}, though we will offer a brief summary here.

 The random Tur\'an problem for non-$k$-partite $k$-graphs  was essentially solved
 in breakthrough works by Conlon and Gowers \cite{CG}
and by Schacht \cite{Schacht}, who showed
that when $p\gg n^{-1/m_k(H)}$, almost surely $\ex(n,\gnpk) =p(\ex(n,H)+o(n^k))$, as $n\to \infty$,
where $m_k(H)=\max_{F\subseteq H, e(F)\geq 2} \frac{e(F)-1}{v(F)-k}$.
The theorem was then reproved using the container method by Balogh, Morris and Samotij \cite{BMS} and independently by Saxton and Thomason~\cite{ST}.

For $k$-partite $k$-graphs (which are also called {\it degenerate $k$-graphs})
much less is known. For degenerate graphs, early results due to Haxell, Kohayakawa, and {\L}uczak \cite{HKL} and Kohayakawa, Kreuter, and Steger \cite{KKS} essentially solved the problem for even graph cycles for small values of $p$. For dense ranges, Morris and Saxton \cite{MS} solved the problem for cycles and complete biparite graphs, and McKinley and Spiro  \cite{theta} extended their result for theta graphs. For dense $p$, some general upper bounds are obtained in \cite{JL}.  

Even less is known for degenerate hypergraphs. For $k$-uniform even linear cycles, Mubayi and Yepremyan \cite{MY} and independently Nie \cite{Nie-cycle} proved 
\begin{theorem}[\cite{MY, Nie-cycle}]\label{linear-cycles-old-bounds}
For every $\ell\geq 2$ and $k\geq 4$ with high probability, the following holds: 
$$ \ex\left(G_{n,p}^{(k)}, C_{2\ell}^{(k)}\right) = \begin{cases} 
\Theta(pn^{k-1}), & \mbox{if } { p \geq n^{-(k-2)+\frac{1}{2\ell-1}+o(1)}} \\
n^{1+\frac{1}{2\ell-1}+o(1)}, & \mbox{if } {n^{-(k-1)+\frac{1}{2\ell-1}+o(1)}\leq p\leq  n^{-(k-2)+\frac{1}{2\ell-1}+o(1)}} \\ 
(1-o(1))pn^k,  & \mbox{if } {n^{-k} \ll p\ll n^{-(k-1)+\frac{1}{2\ell-1}}} .
\end{cases}$$
\end{theorem}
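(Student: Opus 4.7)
The plan is to prove the three-regime formula by establishing matching upper and lower bounds in each regime separately, since the three thresholds correspond to three distinct extremal mechanisms. In the dense regime, the extremal subgraph is essentially a hypergraph Tur\'an example intersected with $G_{n,p}^{(k)}$; in the sparse regime, $G_{n,p}^{(k)}$ is already almost $C_{2\ell}^{(k)}$-free; and in the middle regime, the $p$-independent bound $n^{1 + 1/(2\ell - 1) + o(1)}$ matches the well-studied random Tur\'an bound for graph $C_{2\ell}$, suggesting a reduction from the hypergraph problem to the graph problem via a pair-shadow.

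For the lower bounds, in the dense regime take any $C_{2\ell}^{(k)}$-free $k$-graph $H_0$ on $[n]$ with $\Theta(n^{k-1})$ edges (guaranteed by the extremal results of F\"uredi--Jiang and Kostochka--Mubayi--Verstra\"ete) and observe via Chernoff that $H_0 \cap G_{n,p}^{(k)}$ has $(1 - o(1)) p \lvert H_0 \rvert = p n^{k - 1 + o(1)}$ edges. In the sparse regime, $\mathbb{E}[\#C_{2\ell}^{(k)} \text{ copies in } G_{n,p}^{(k)}] = O(p^{2\ell} n^{2\ell(k-1)}) = o(p\binom{n}{k})$ when $p \ll n^{-(k-1) + 1/(2\ell - 1)}$, so a one-edge-per-copy deletion retains $(1 - o(1)) p \binom{n}{k}$ edges. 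For the middle regime, take a $C_{2\ell}$-free graph $G_0$ on $[n]$ with $n^{1 + 1/(2\ell - 1) + o(1)}$ edges and extend each edge to a $k$-edge by attaching a distinct $(k - 2)$-set of fresh vertices; a pigeonholing argument embeds a linear-sized fraction of this construction into $G_{n,p}^{(k)}$ whenever $p$ exceeds the sparse-to-middle threshold.

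For the upper bounds, the dense regime follows from a container lemma for $C_{2\ell}^{(k)}$-free $k$-graphs (via Balogh--Morris--Samotij / Saxton--Thomason hypergraph containers applied together with the known Tur\'an bound for $C_{2\ell}^{(k)}$): there are $2^{o(n^{k-1})}$ containers of size $n^{k - 1 + o(1)}$, and a union bound with Chernoff on each container gives $p n^{k - 1 + o(1)}$ with high probability. The sparse regime upper bound is the trivial $\lvert E(G_{n,p}^{(k)}) \rvert \leq (1 + o(1)) p \binom{n}{k}$.

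The main obstacle is the middle-regime upper bound, whose exponent $1 + 1/(2\ell - 1)$ is independent of $p$. The plan is a shadow-and-embed argument. Given $H \subseteq G_{n,p}^{(k)}$ that is $C_{2\ell}^{(k)}$-free, define the pair-shadow $P(H) = \{\{u, v\}\colon \exists e \in H, \{u, v\} \subseteq e\}$. A concentration argument shows that with high probability over $G_{n,p}^{(k)}$ every pair lies in at most $(1 + o(1)) p n^{k - 2}$ edges of $G_{n,p}^{(k)}$, so $\lvert P(H) \rvert \gtrsim \lvert H \rvert / (p n^{k - 2})$. If $\lvert H \rvert$ exceeds $n^{1 + 1/(2\ell - 1) + o(1)}$, then $P(H)$ contains $\gg n^{1 + 1/(2\ell - 1)}$ pair-edges, forcing a copy of $C_{2\ell}$ inside $P(H)$ via a random Tur\'an bound for $C_{2\ell}$ applied to the ambient pair-graph of $G_{n,p}^{(k)}$. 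A K{\L}R-style embedding lemma then lifts this copy of $C_{2\ell}$ to a copy of $C_{2\ell}^{(k)}$ in $H$ by choosing disjoint $(k-2)$-completions from $G_{n,p}^{(k)}$, which is generically feasible since there are $\Theta(p n^{k-2})$ choices per graph edge and only $2\ell$ edges to realize. This two-step ``shadow, then embed'' argument, together with the careful two-round revelation of $G_{n,p}^{(k)}$ that it requires, is the technical heart of the proof.
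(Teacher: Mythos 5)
The paper does not prove Theorem~\ref{linear-cycles-old-bounds} at all. It is quoted verbatim as a known result of Mubayi--Yepremyan and, independently, Nie, and cited in the sentence immediately preceding it; the paper's own contributions (Theorems~\ref{main} and~\ref{random-tree}) sharpen the dense-regime estimate $pn^{k-1+o(1)}$ to $p(\floor{(\ell-1)/2}+o(1))\binom{n}{k-1}$ for $k\geq 5$, but they give no argument for the three-regime formula you are reconstructing. So there is no in-paper proof to compare against, and your proposal must be judged on its own.

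On its own merits, the sparse- and dense-regime parts are sound and match the standard template (deletion method for the sparse lower bound, extremal graph intersected with $G_{n,p}^{(k)}$ for the dense lower bound, containers plus Chernoff plus union bound for the dense upper bound), but both middle-regime steps have real gaps. For the middle-regime lower bound, you propose expanding each edge of a $C_{2\ell}$-free graph $G_0$ with $n^{1+1/(2\ell-1)+o(1)}$ edges by attaching a \emph{distinct} fresh $(k-2)$-set to each edge; this requires $(k-2)\,n^{1+1/(2\ell-1)+o(1)}\gg n$ fresh vertices, so the construction does not fit on $[n]$, and the ``pigeonholing'' that is supposed to salvage it is not spelled out. (The bound $n^{1+1/(2\ell-1)}$ is the random Tur\'an exponent for graph $C_{2\ell}$, not the deterministic Tur\'an number $\Theta(n^{1+1/\ell})$, so there is no deterministic $C_{2\ell}$-free graph of that density available to expand; the actual lower bound in the middle regime has to be probabilistic in $G_{n,p}^{(k)}$ itself.)

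For the middle-regime upper bound, the ``shadow, then embed'' argument fails on two counts. First, the concentration claim that every pair lies in at most $(1+o(1))pn^{k-2}$ edges of $G_{n,p}^{(k)}$ requires $pn^{k-2}\gg\log n$, but at the lower end of the middle range $p$ can be as small as $n^{-(k-1)+1/(2\ell-1)+o(1)}$, giving $pn^{k-2}=n^{-(2\ell-2)/(2\ell-1)+o(1)}\to 0$; there is no such concentration, and the heuristic $|P(H)|\gtrsim |H|/(pn^{k-2})$ becomes meaningless (it would exceed $\binom{n}{2}$). Second, and more seriously, finding a $C_{2\ell}$ in the pair-shadow $P(H)$ does not produce a $C_{2\ell}^{(k)}$ in $H$: each pair of the cycle must be covered by a hyperedge of $H$ (not just of $G_{n,p}^{(k)}$), and these $2\ell$ hyperedges must intersect pairwise only as the linear cycle dictates. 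You invoke ``$\Theta(pn^{k-2})$ choices per graph edge,'' but that counts hyperedges of $G_{n,p}^{(k)}$, whereas the adversary picked $H$ precisely to block such linear completions; nothing rules out that every hyperedge of $H$ covering a given pair shares an extra vertex with some neighbouring hyperedge. This lifting step would need a genuinely new idea (or the balanced-supersaturation/container machinery that Mubayi--Yepremyan and Nie actually use) to close.
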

For other classes of hypergraphs, near optimal results were obtained by Nie \cite{Nie-expand} for $k$-expansions of subgraphs of tight $p$-trees and of $K_p^{p-1}$, where $k\geq p\geq 3$, which includes both odd and even linear cycles. Nie and Spiro \cite{NS} also were able to get near optimal bounds for expansions of theta-graphs. Furthermore, for any $k$-uniform hypergraph $H$, which satisfies that for every $k$-uniform $G$ there are at least $e(G)^{e(H)}v(G)^{v(H) - k e(H)}$ many homomorphisms from $H \rightarrow G$, they proved that there is some $r_0 \geq k$ such that for every $r \geq r_0$, tight bounds on the random Tur\'an number of $r$-expansions of $H$ hold.

As an immediate byproduct of our main result, for dense $p$ and for $\ckl$ with $k \geq 5$, we are able to 
sharpen the bound $pn^{k-1+o(1)}$ to the correct bound  
$p(\floor{\frac{\ell-1}{2}}+o(1))\binom{n}{k-1}=p(1+o(1)) \ex(n,\ckl)$,
albeit for a slighly more restricted range of $p$.
This comes as an immediate corollary to the following more general theorem. 
A hypergraph $\HH$ is $\ell$-overlapping if for any two edges $E,F$ in $\HH$, $|E\cap F|\leq \ell$.

\begin{theorem}\label{random-tree} Let $k\geq 4$ be an integer. Let $\HH$ be a $2$-contractible $\ell$-overlapping $k$-tree. When $p \gg \frac{\log (n)^2}{n^{ k -\ell - 1}}$, with high probability 
\[\ex(\gnpk, \HH)= (\cross(\HH) - 1 +o(1))p \binom{n}{k - 1} = p(1+o(1))\ex(n,\HH).\]
\end{theorem}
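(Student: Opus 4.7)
The plan is to prove matching lower and upper bounds separately, with the lower bound being routine and the upper bound being where the paper's new machinery does its work. For the lower bound, I would take a $k$-graph $G^{\ast}$ on $[n]$ achieving $\ex(n,\HH) = (\sigma(\HH)-1-o(1))\binom{n}{k-1}$ from Theorem~\ref{FJ-trees} and consider $G^{\ast}\cap G_{n,p}^{(k)}$, which is automatically $\HH$-free. A standard Chernoff bound gives $|E(G^{\ast}\cap G_{n,p}^{(k)})|\ge(1-o(1))p|E(G^{\ast})| = (\sigma(\HH)-1-o(1))p\binom{n}{k-1}$ with high probability, as soon as $p\binom{n}{k-1}\to\infty$, which holds under our hypothesis on $p$.

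For the matching upper bound, the plan is to combine the supersaturation variant of the Delta-system method developed in the paper (the engine behind Theorem~\ref{main}) with the hypergraph container lemma of Balogh--Morris--Samotij / Saxton--Thomason. The first step is to upgrade the supersaturation output to a \emph{balanced} supersaturation statement: any $k$-graph on $[n]$ with more than $(\sigma(\HH)-1+\varepsilon)\binom{n}{k-1}$ edges contains many copies of $\HH$, and moreover these copies are codegree-bounded in the sense the container lemma demands (few copies through any small edge set). The $\ell$-overlapping hypothesis is essential here: since any two edges of $\HH$ meet in at most $\ell$ vertices, pairs of $\HH$-edges live on common neighbourhoods of size $O(n^{k-\ell-1})$, which is the geometric input that drives the container bound. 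Feeding this into the container lemma should produce a family $\mathcal{C}$ of $k$-graphs on $[n]$ such that (a) every $\HH$-free $k$-graph on $[n]$ is a subgraph of some $C\in\mathcal{C}$, (b) $|E(C)|\le(\sigma(\HH)-1+\varepsilon)\binom{n}{k-1}$ for each $C$, and (c) $\log|\mathcal{C}|=O(n^{\ell}\log^{2}n)$.

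Given such containers, the upper bound closes via Chernoff plus a union bound. For each $C\in\mathcal{C}$, an upper-tail Chernoff inequality yields
\[
\Pr\!\left[\,|C\cap G_{n,p}^{(k)}| > (\sigma(\HH)-1+2\varepsilon)\, p\binom{n}{k-1}\,\right] \le \exp\!\left(-c\varepsilon^{2}\, p\binom{n}{k-1}\right),
\]
and summing over $\mathcal{C}$ bounds the total failure probability by $\exp\!\bigl(\log|\mathcal{C}| - c\varepsilon^{2}p\binom{n}{k-1}\bigr)$. The assumption $p\gg\log^{2}(n)/n^{k-\ell-1}$ is precisely what ensures $p\binom{n}{k-1}\gg n^{\ell}\log^{2}n\gg\log|\mathcal{C}|$, so the union bound swallows all failure events. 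Since every $\HH$-free subgraph of $G_{n,p}^{(k)}$ lies inside some $C\cap G_{n,p}^{(k)}$, we conclude that with high probability $\ex(G_{n,p}^{(k)},\HH)\le(\sigma(\HH)-1+2\varepsilon)\, p\binom{n}{k-1}$, and letting $\varepsilon\to 0$ finishes.

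The main obstacle is the balanced supersaturation step. Producing copies of $\HH$ above the extremal threshold is already nontrivial and is the content of the paper's supersaturation Delta-system machinery, but the container lemma demands more: a codegree-bounded distribution of copies with the right scaling. Arranging the Delta-system output so that codegrees of small $\HH$-copy bundles obey $O(n^{k-\ell-1})$-type bounds matching the $\ell$-overlapping hypothesis, and then pushing this through the container fingerprinting to hit the target $\log|\mathcal{C}|=O(n^{\ell}\log^{2}n)$ that aligns with the hypothesized $p$-threshold, is where the real technical commitment sits.
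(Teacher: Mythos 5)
Your proposal is correct and follows essentially the same route as the paper's proof: balanced supersaturation via the Delta-system machinery, iterated containers yielding a family of at most $\exp(O(n^{\ell}\log^{2}n))$ containers each with at most $(\sigma-1+\varepsilon)\binom{n}{k-1}$ edges, Chernoff plus a union bound over containers for the upper bound, and an extremal $\HH$-free graph intersected with $G_{n,p}^{(k)}$ together with Chernoff for the lower bound. The only cosmetic difference is that the paper's lower bound uses the explicit $\sigma-1$-core star construction rather than appealing to Theorem~\ref{FJ-trees}, but this is immaterial.
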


As mentioned earlier, for all $k\geq 5, \ell \geq 3$, there exists a $2$-contractible
$k$-tree $\tkl$ containing $\ckl$ with $\sigma(\tkl)=\sigma(\ckl)=\floor{\frac{\ell+1}{2}}$. Hence,
Theorem \ref{random-tree} immediately implies

\begin{corollary} For all $k\geq 5, \ell\geq 3$ and $p \gg \log (n)^2 n^{ -(k - 3)}$, with high probability,
\[\ex(\gnpk, \ckl) = p\left(\floor{\frac{\ell-1}{2}}+o(1)\right )\binom{n}{k-1} 
=p(1+o(1))\ex(n,\ckl).\]
\end{corollary}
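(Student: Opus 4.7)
The plan is to derive the corollary from Theorem \ref{random-tree} by exhibiting, for each $k\geq 5$ and $\ell\geq 3$, a $2$-contractible, $2$-overlapping $k$-tree $\tkl$ that contains $\ckl$ and satisfies $\sigma(\tkl)=\sigma(\ckl)=\floor{(\ell+1)/2}$. Given such a $\tkl$, the $2$-overlapping condition makes Theorem \ref{random-tree} applicable at threshold $p\gg \log(n)^2/n^{k-2-1}=\log(n)^2/n^{k-3}$, exactly the range in the corollary, and one then pairs the resulting upper bound on $\ex(\gnpk,\tkl)$ with the obvious random-sparsification lower bound on $\ex(\gnpk,\ckl)$.

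The technical heart of the argument is the construction of $\tkl$. Write the edges of $\ckl$ as $E_1,\dots,E_\ell$, with connection vertices $b_{i,i+1}:=E_i\cap E_{i+1}$ (indices mod $\ell$) and $k-2$ private vertices in each $E_i$. For $\ell=3$ and $k\geq 5$, a single bridge edge $F=\{b_{12},b_{23},b_{31},u_1,\dots,u_{k-3}\}$ works: starting from $F$, each $E_i$ can be appended as a leaf with $E_i\cap V(\{F,E_j: j<i\})=E_i\cap F=\{b_{i-1,i},b_{i,i+1}\}$, and $F$ has $k-3\geq 2$ degree-$1$ vertices, so the resulting $4$-edge hypergraph is a $2$-contractible $2$-overlapping $k$-tree, and one checks that taking one connection vertex together with a private vertex of a far cycle edge yields a cross-cut of size $2=\sigma(C_3^{(k)})$. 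For general $\ell$, one iterates this idea: add a small number of bridge edges, each sharing at most two vertices with every previously chosen edge, so that the cycle edges are absorbed sequentially in a tree-defining order, each bridge retains at least two private degree-$1$ vertices, and a minimum cross-cut of $\ckl$ still sees every bridge in exactly one vertex. The main obstacle is to produce a construction that simultaneously satisfies all four properties (tree structure, $2$-contractibility, $2$-overlap, and unchanged cross-cut number) for every $k\geq 5, \ell\geq 3$; this is handled by having each bridge span just enough consecutive connection vertices to serve as a common parent for a block of subsequent cycle leaves, and by placing the bridges so that one of the "canonical" cross-cuts $\{b_{12},b_{34},\dots\}$ of $\ckl$ (suitably adjusted in the odd-$\ell$ case) extends to a cross-cut of $\tkl$ without growth.

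With $\tkl$ in hand, the upper bound is immediate: any $\ckl$-free sub-hypergraph of $\gnpk$ is $\tkl$-free, so $\ex(\gnpk,\ckl)\leq \ex(\gnpk,\tkl)$, and Theorem \ref{random-tree} gives whp
\[\ex(\gnpk,\tkl)=(\sigma(\tkl)-1+o(1))p\binom{n}{k-1}=\left(\floor{\tfrac{\ell-1}{2}}+o(1)\right)p\binom{n}{k-1}.\]
For the matching lower bound, take an extremal $\ckl$-free $k$-graph $G$ on $[n]$, which by Theorem \ref{FJ-trees} has $(1-o(1))\floor{(\ell-1)/2}\binom{n}{k-1}$ edges, and observe that $G\cap \gnpk$ is $\ckl$-free with edge count distributed as $\operatorname{Bin}(|E(G)|,p)$. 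A standard Chernoff bound gives $|E(G\cap \gnpk)|\geq p(1-o(1))\ex(n,\ckl)$ whp as soon as $p\cdot\ex(n,\ckl)\to\infty$, which holds comfortably in the regime $p\gg \log(n)^2\, n^{-(k-3)}$. Combining the two bounds yields the stated asymptotic equality.
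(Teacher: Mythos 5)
Your proposal follows essentially the same route as the paper: construct a $2$-contractible $2$-overlapping $k$-tree $\tkl\supseteq\ckl$ with $\sigma(\tkl)=\sigma(\ckl)=\floor{(\ell+1)/2}$, apply Theorem~\ref{random-tree} for the upper bound, and combine with a sparsified extremal construction for the lower bound. One thing you do that the paper does not: you make explicit that $\tkl$ must be $2$-overlapping (not merely $2$-contractible) in order for Theorem~\ref{random-tree} to apply in the stated range $p\gg\log^2(n)\,n^{-(k-3)}$; the paper's preceding remarks assert only that $\tkl$ is a $2$-contractible $k$-tree with the right cross-cut number, leaving the $2$-overlap property implicit. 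Your construction for $\ell=3$ is complete and correct, and your outlined ``bridge path/fan'' for general $\ell$ (e.g.\ bridges $F_j=\{b_{\ell,1},b_{j,j+1},b_{j+1,j+2}\}\cup U_j$ for $j=1,\dots,\ell-2$, each with $k-3\geq 2$ private vertices) does produce a $2$-contractible $2$-overlapping $k$-tree containing $\ckl$ whose cross-cut number is preserved, so the sketch is adequate and at the same level of detail as the paper's own ``it is not hard to construct.''
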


\subsection{Overiew of methodology and organization of the paper}

At the heart of our work is the so-called optimal balanced supersaturation at the Tur\'an thresold. Several long-standing conjectures
of Erd\H{o}s and Simonovits \cite{ES-cube} addressed the question of how many copies of a graph $H$  we can guarantee
in a dense enough host graph $G$. Loosely speaking, the conjectures say that the number of copies of $H$
we expect in $G$ should be at least on the same order of magnitude as the number of copies of $H$ we expect
in a random graph with the same edge-density as $G$. These are referred to as supersaturation conjectures.
The strongest of the Erd\H{o}s-Simonovits supersaturation conjectures says that the conjectured bound 
on the number of copies of $H$ should already hold as soon as an $n$-vertex graph $G$ has just barely asymptotically a bit
more edges than what is enough to guarantee a single copy of $H$, namely when $e(G)\geq (1+\varepsilon) \ex(n,H)$
for any small real $\varepsilon>0$. We will refer to this phenomenon as {\it optimal supersaturation at the Tur\'an threshold}.
Establishing optimal supersaturation at the Tur\'an threshold turns out to be a very difficult task for degenerate (i.e bipartite) 
graphs, with the problem being unsolved except for very few bipartite graphs.

In the last decade or so, the development of the container method has brought
enhanced importance to the supersaturation problem. Specifically, the container method allows one
to obtain tight enumeration results on $\forb(n,H)$ once one is able to obtain supersaturation of $H$ with the
additional feature that the copies of $H$ found are evenly distributed in a sense. While this paved the way
for several breakthroughs mentioned earlier, naturally developing optimal supersaturation at the Tur\'an threshold
with the added balanced feature is an even more difficult task than the one without the additional balanced requirement, as witnessed by the fact this has not been done even
for the $4$-cycle, whose extremal number is very well-understood \cite{Furedi-C4} and for whom supersaturation at the Tur\'an threshold has been achieved \cite{ES-sat}.

Our main results crucially build on the optimal balanced supersaturation at the Tur\'an threshold for $2$-contractible hypertrees.
Given the difficulty with the optimal balanced supersaturation at the Tur\'an threshold for graphs, it does come
as a surprise that one is able to establish it for a large family of degenerate hypergraphs. This is in a strong sense 
attributed to the power of the method we use, known as the Delta system method for set-systems. However, while the Delta system method
has been successfully used on Tur\'an type problem for hypergraphs, it has not been tailored for supersaturation problems before.
In that regard, the most important innovative aspect of our work is the development of a supersaturation variant of the Delta system method
and applying it successfully with the container method to get tight enumeration results. We believe that this variant of the Delta system 
method will find future applications.

We organize the rest of the paper as follows. 
In section \ref{sec:notation}, we give some notation. In Section \ref{sec:delta},
we develop a supersaturation variant of the Delta system method and develop
a structural dichotomy for all $k$-graphs with $\Theta(n^{k-1})$ edges, both of which may be of independent interest.
In Section \ref{sec:supersaturation}, we develop optimal supersaturation at the Tur\'an threshold for $2$-contractible hypertrees.
In Section \ref{sec:balanced-supersaturation}, we develop optimal balanced supersaturation at the Tur\'an thresold for $2$-contractible hypertrees. In Section \ref{sec:main-proofs}, we prove our main theorem, Theorem \ref{main}, as well as Theorem \ref{random-tree}.

\section{Notation} \label{sec:notation}

Let $\F$ be a hypergraph on $V=V(\F)$. For each integer $i\geq 0$,  we define the {\it $i$-shadow} of $\F$ to be
\[\partial_i(\F):=\{D: |D|=i, \exists F\in \F, D\subseteq F\}.\]

The Lov\'asz'~\cite{L79} version of the Kruskal-Katona theorem states that if
 $\F$ is a $k$-graph of size $|\F|=\binom{x}{k}$, where $x\geq k-1$ is a real number,
 then for all $i$ with $1\leq i \leq k-1$ one has
\begin{equation}\label{lovasz-bound}
  |\partial_i(\F)|\geq \binom{x}{i}.
  \end{equation}
  
Given $D\subseteq V(\F)$, we define the {\it link of} $D$ in $\F$ to be
\[\Lc_\F(D)=\{F\setminus D: F\in \F, D\subseteq F\}.\]
Note that we allow $\emptyset$ to be a member of $\Lc_\F(D)$. We define the {\it degree of } $D$ in $\F$ to
be $d_\F(D):=|\Lc_\F(D)|$.

Given a $k$-graph $\F$ where $k\geq 2$ is an integer and integer $i$ with $1\leq i\leq k-1$, let
\[\Delta_i(\F)= \max \{d_\F(D): D\in \partial_i(\F)\} \mbox { and } \delta_i(\F):=\min\{d_\F(D): D\in \partial_i(\F)\}.\]
We call $\delta_i(\F)$ the {\it proper minimum $i$-degree} of $\F$. By definition, every $i$-set in $\F$
either has degree $0$ or has degree at least $\delta_i(\F)$. Let $\F$ be a $k$-graph and $S\subseteq V(\F)$,
we let $\F-S:=\{F\setminus S: F\in \F\}$



\section{A variant of the Delta system method and a structural dichotomy for $k$ graphs of size $\Theta(n^{k - 1})$}
\label{sec:delta}

 The {\it delta system method}, originated by Deza, Erd\H{o}s and Frankl \cite{DEF}
and others, is a powerful tool for solving extremal set problems. A particularly versatile tool within the method, which one may call 
{\it the intersection semilattice lemma} was developed by F\"uredi \cite{Furedi-1983} (Theorem 1'). The delta system method, particularly aided
by the semilattice lemma, has been 
very successfully used to obtain a series of sharp results on extremal set problems (see for instance \cite{FF-1985,
FF-exact, Furedi-1983, Furedi-trees, FJ-cycles, FJS}). However, despite its effectiveness in determining the threshold
on the size of a hosting hypergraph beyond which a certain subgraph occurs, it does not readily allow us
to effectively count the number of such subgraphs (known as the supersaturation problem).
In this section, we develop a variant of F\"uredi's intersection semilattice lemma, Theorem~\ref{homogeneous} below,
to also address hypergraph supersaturation.

Let $k\geq 2$ be an integer. Let $\F$ be a $k$-partite $k$-graph $\F$ with a fixed $k$-partition $(X_1,\dots, X_k)$.
For each $J\subseteq [k]$ and $F\in \F$,  we define the {\it $J$-projection} of $F$, denoted by $F_J$ to be

\[F_J:= F\cap (\bigcup_{i\in J} X_i).\]
Conversely, for any $D\in \bigcup_{i=1}^k \partial_i(\F)$, we define the {\it pattern} of $D$, denoted by $\pi(D)$, to be
\[\pi(D):=\{i\in [k]: D\cap X_i\neq \emptyset\}.\]
Note that since $\F$ is $k$-partite, $|\pi(D)|=|D|$ for each $D\in \bigcup_{i=1}^k \partial_i(\F)$.

Given a positive integer $s\geq 2$, a nonempty hypergraph $\Hc$ is called {\it $s$-diverse} if 
\[\forall v\in V(\Hc), d_\Hc(v)< (1/s)|\Hc|.\]
Note that implicitly an $s$-diverse hypergraph necessarily contains more than $s$ edges.

Let
\[\MI(\F)=\{\pi(E\cap F): E,F\in \F, E\neq F\}.\]


\begin{theorem}[Super-homogeneous Subfamily Lemma] \label{homogeneous} 
Let $s, k\geq 2$ be integers where $s \geq 2k$. Let $c(k,s) =  \frac{k!}{k^k (2s(1 + 2^{k}))^{2^k}}$. 
Let $\F$ be a $k$-graph on $[n]$.
Then there exists a $k$-partite subgraph $\F' \subseteq \F$ with some $k$-partition $(X_1,\dots, X_k)$ such that the following holds: 
\begin{enumerate}
    \item $|\F'|\geq c(k,s) |\F|$.
\item For every $F\in \F'$ and $J\in \MI(\F')$, $\Lc_{\F'}(F_J)$ is $s$-diverse. 
\item For every $F\in \F'$ and $J\in \MI(\F')$, $d_{\F'}(F_J)\geq \max\{s,  \frac{1}{2k}\frac{|\F'|}{n^{|J|} }\}$.
\end{enumerate}
We call such a $k$-graph $\F'$ $s$-super-homogeneous. 
\end{theorem}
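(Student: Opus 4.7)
The plan is a two-phase argument: first reduce to the $k$-partite setting via a random coloring, then iteratively clean up to enforce the homogeneity conditions. In the first phase, I would color each vertex of $[n]$ independently and uniformly with one of $k$ colors, producing classes $X_1,\dots,X_k$. Any fixed $F\in\F$ is rainbow (one vertex per class) with probability $k!/k^k$, so in expectation $(k!/k^k)|\F|$ edges are rainbow, and fixing a coloring that beats this expectation yields a $k$-partite subfamily $\F_0\subseteq\F$ with $|\F_0|\ge(k!/k^k)|\F|$.

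Starting from $\F_0$, I would then iteratively delete edges to enforce conditions (2) and (3). Call a set $D$ with $\pi(D)=J\subsetneq[k]$ \emph{thin} in the current family $\F^*$ if $0 < d_{\F^*}(D) < \max\{s,\tfrac{1}{2k}|\F_0|/n^{|J|}\}$, and \emph{concentrated} if $d_{\F^*}(D)>0$ and $\Lc_{\F^*}(D)$ is not $s$-diverse. I would sweep the $2^k$ patterns $J\subseteq[k]$ in decreasing order of $|J|$; for each $J$, delete every edge of $\F^*$ that contains a thin or concentrated $D$ of pattern $J$. If a sweep creates new bad sets, re-sweep until a fixed point is reached.

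The per-pattern loss should be bounded by $2s(1+2^k)$. The thin contribution loses at most $n^{|J|}\cdot\max\{s,|\F_0|/(2kn^{|J|})\}\le sn^{|J|}+|\F_0|/(2k)$ edges, giving a factor-$2$ loss (absorbing lower-order terms, as the lemma is vacuous when $|\F|$ is too small). For the concentrated contribution, whenever $\Lc_{\F^*}(D)$ has a vertex $v$ of degree $\ge|\Lc_{\F^*}(D)|/s$, we have $d_{\F^*}(D)\le s\,d_{\F^*}(D\cup\{v\})$, so I would charge the removed edges to the higher-pattern projection $D\cup\{v\}$; iterating this charge up the subset lattice (with at most $2^k$ promotion targets) yields the factor $s(1+2^k)$. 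Combining and multiplying over the $2^k$ patterns gives a total loss of $(2s(1+2^k))^{2^k}$, so that $|\F'|\ge c(k,s)|\F|$, establishing condition (1), while condition (3) is immediate from the thin-cleanup threshold and condition (2) from the concentrated-cleanup.

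The chief difficulty will be the interdependence between cleanup steps at different patterns: deleting edges at pattern $J$ shrinks links at other patterns and can damage already-established $s$-diversity or degree bounds. I expect to handle this via the decreasing-$|J|$ ordering (so cleanups at smaller patterns only propagate to larger, already-cleaned patterns) together with a fixed-point argument bounding the number of re-sweeps by $2^k$ so the total loss factor remains $(2s(1+2^k))^{2^k}$. A useful simplification is that conditions (2) and (3) are required only for $J\in\MI(\F')$: if a pattern becomes inactive after cleanup (no two remaining edges intersect with that pattern), then no obligations remain there, giving additional slack in the deletion scheme and easing verification of the fixed point.
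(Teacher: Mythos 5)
Your first phase (random $k$-coloring to extract a $k$-partite $\F_0$ with a $k!/k^k$ loss) matches the paper's use of the Erd\H{o}s--Kleitman bound, so the starting point is the same. The gap is in the second phase, and it is not a small technicality: a ``delete bad projections and re-sweep'' scheme cannot, by itself, reach the conclusion, because the charging step you describe for the concentrated cleanup is circular. If $D$ is concentrated, you propose to remove \emph{all} edges containing $D$ and to charge them, at cost $s$, to the edges containing $D\cup\{v\}$, where $v$ is the heavy vertex of $\Lc(D)$. But every edge containing $D\cup\{v\}$ also contains $D$, so those edges are \emph{also} being removed in the same step. The charge therefore lands on removed mass, not surviving mass, and iterating it up the subset lattice never terminates at anything that is kept --- at the top level $|D'|=k-1$ the set $D'$ with $d(D')\le s$ is again removed. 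So nothing in the argument shows that the surviving family after a sweep has size $\ge |\F_0|/(2s(1+2^k))$, and in fact it can be empty: a single ``concentrated'' direction can make every $J$-projection fail $s$-diversity simultaneously, at which point your cleanup deletes the whole family. Relatedly, the bound ``loss $\le sn^{|J|}+|\F_0|/(2k)$'' is not a fraction of $|\F_0|$ when $|\F_0|\lesssim sn^{|J|}$, and the lemma is \emph{not} vacuous in that regime, so ``absorbing lower-order terms'' does not close that hole either. Finally, there is no argument that the number of re-sweeps is at most $2^k$; you assert it, but deletions can create new thin or concentrated projections, and nothing in a pure deletion scheme forces termination in $2^k$ rounds.

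What the paper does differently --- and what your proposal is missing --- is that it does \emph{not} delete its way to homogeneity. The filtering process routes discarded edges into buckets $\W_0$ (too low degree, provably small) and $\A_J$ (not $s$-diverse). If the surviving family $\G_0^*$ is small, then rather than accepting the loss, the proof \emph{recovers} the edges: by pigeonhole some $\A_J$ is large, and within each batch of $\A_J$ one keeps only the edges through the heavy vertex, producing a subfamily $\A'_J$ of size at least $|\A_J|/s$ in which $J$ is \emph{no longer} an intersection pattern. The whole process is then restarted on $\A'_J$. The progress measure is $|\MI(\F_i)|$, which strictly decreases each round, giving the $2^k$ bound on the number of rounds and hence the constant $c(k,s)$. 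This ``restart on the dominant sub-batch, killing a pattern from $\MI$'' move is precisely the mechanism that replaces your (non-working) charging argument, and you would need something equivalent to make your outline go through.
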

\begin{proof}
By a well-known result of Erd\H{o}s and Kleitman \cite{EK}, $\F$ contains a $k$-partite subgraph $\F_0$
with $|\F_0|\geq \frac{k!}{k^k}|\F|$.  Let $(X_1, X_2, \dots X_k)$ be a fixed $k$-partition of $\F_0$.
If $\F_0$ satisfies conditions 2 and 3, then the theorem holds with $\F'=\F_0$. Otherwise, let 
$\G_0=\F_0$. We perform a so-called {\it filtering process} on $\G_0$ as follows.
Let $\W_0=\emptyset$.
For each $J\in \MI(\G_0)$, let $\A_J=\emptyset$. We iteratively modify $\G_0$, $\W_0$  and the $\A_J$'s for $J\in \MI(\G_0)$ as follows.
 Whenever there is an edge $F\in \G_0$ and a $J\in \MI(\G_0)$ such that
$\Lc_{\G_0}(F_J)$ is nonempty and not $s$-diverse, we remove all the edges of $\G_0$ containing $F_J$ and
add them to $\A_J$. Whenever there is an edge $F\in \G_0$ and a $J\in \MI(G_0)$ such that $\Lc_{\G_0}(F_j)$ is $s$-diverse
but $d_{\G_0}(F_J)<\frac{1}{2k}\frac{|\F_0|}{n^{|J|}}$, we remove all the edges of $\G_0$ containing $F_J$ and add them to $\W_0$.
Let $\G^*_0$ denote the final $\G_0$ at the end of the filitering process. By definition, if $\G^*_0$ is nonempty, then $\G^*_0$ satisfies conditions 2
and 3.

Note that  $|\W_0|\leq  |\F_0|/2$. This is because for any fixed $j=1,\dots, k-1$, there are at most $\binom{n}{j}$
different $F_J$'s. When all edges containing some $F_J$ are moved to $\W_0$, by definition, fewer than $\frac{1}{2k}\frac{|\F_0|}{n^{|J|}}$ edges 
are moved.  Hence $|\F_0\setminus \W_0|\geq |\F_0|/2$.
If $|\G_0^*|\geq \frac{1}{1+2^k} |\F_0\setminus \W_0|$, we let $\F'=\G^*_0$.
Otherwise, by the pigeonhole principle, there exists some $J\in \MI(\G_0)$ such that $|\A_J|\geq \frac{1}{1+2^k} |\F_0\setminus \W_0|$. 
Note that edges were added to $\A_J$ in batches, with each  batch consisting of edges $F$ with the same $J$-projection and
different batches have different $J$-projections. Consider any batch $\B$  added to $\A_J$. Let $D$ denote the common $J$-projection
of the edges in $\B$. By definition, $\Lc_\B(D)=\Lc_{\G_0}(F_J)$ is nonempty and not $s$-diverse at the moment $\B$ was added to $\A_J$.
By definition, there exists a vertex $v$ in $\Lc_\B(D)$ that lies in at least $(1/s)|\Lc_\B(D)|$ of the edges. 
Let $\B'$ denote the subset of edges in $\B$ that also contain $v$. We now remove $\B$ from $\A_J$ and replace it with $\B'$.
We do this for each batch of edges that were added to $\A_J$, and denote the resulting subgraph of $\A_J$ by $\A'_J$.
Then $|\A'_J|\geq (1/s)|\A_J|$. Furthermore, it is easy to see that $J\notin \MI(\A'_J)$.
We let $\F_1=\A'_J$. Then 
\[|\F_1|\geq \frac{1}{s(1+2^k)}|\F_0\setminus \W_0|\geq \frac{1}{2s(1+2^k)}|\F_0|  \quad \mbox{ and } \quad |\MI(\F_1)|\leq |\MI(\F_0)|-1.\]

Now, let $\G_1=\F_1$, let $\W_1=\emptyset$ and set $\A_J=\emptyset$ for all $J\in \MI(\G_1)$. We 
then perform the same filtering on $\G_1$ to iteratively modify $\G_1$, $\W_1$ and the $\A_J$'s for $J\in \MI(\G_1)$. Let $\G^*_1$
denote the final $\G_1$ at the end of the filtering process. By definition, $\G^*_1$ satisfies conditions 2 and 3.
 If $|\G_1^*|\geq \frac{1}{1+2^k}|\F_1\setminus \W_1|$, we let $\F'=\G_1^*$.
Otherwise, as before, there exists some $J\in \MI(\G_1)$ and a subgraph $\A'_J\subseteq \A_J$ with $|\A'_J|\geq \frac{1}{2s(1+2^k)} |\F_1|$
such that $|\MI(\A'_J)|\leq  |\MI(\G_1)|-1$. We let $\F_2=\A'_J$.

We continue like this, obtaining a sequence $\F_0, \F_1,\dots$. Since $|\MI(\F_i)|$ strictly decreases with $i$, the sequence must end with $\F_m$
for some $m\leq 2^k-1$. Since $\F_{m+1}$ is undefined, this must mean that 
\[|\G_m^*|\geq \frac{1}{1+2^k}|\F_m\setminus \W_m|\geq \frac{1}{2(1+2^k)}|\F_m|\geq \frac{1}{[2s(1+2^k)]^{2^k}}|\F_0|\geq c(k,s)|\F|,\]
where $c(k,s)=\frac{k!}{k^k}\cdot  \frac{1}{[2s(1+2^k)]^{2^k}}$. Let $\F'=\G^*_m$. Then $|\F'|\geq c(k,s)|\F|$ and $\F'$ also satisfies conditions
2 and 3, by the definition of $\G^*_m$.
\end{proof}

Throughout the rest of the paper, whenever we consider an $s$-super-homogeneous $k$-graph $\F$, we always implicitly
fix a $k$-partition associated with $\MI(\F)$. 

Next, we collect some useful facts about $s$-super-homogeneous families, for which we need the following definition. 

\begin{definition}
{\rm 
Let $k$ be a positive integer. Given a family $\J$ of proper subsets of $[k]$,  let 
\[r(\J):=\min\{|D|: D\subseteq [k], \not\exists J\in \J \text{ such that } D\subseteq J\}.\] We call $r(\J)$ the {\it rank} of $\J$.}
\end{definition}
\begin{lemma} \label{semilattice}
Let $s , k \geq 2$ be integers with $s \geq 2k$. Let $\F$ be an $s$-super-homogeneous $k$-partite $k$-graph on $[n]$ with some fixed $k$-partition
$(X_1,\dots, X_k)$. Then, the following hold.
\begin{enumerate}
\item For each $J\subsetneq [k]$ where $J\notin \MI(\F)$ and each $F\in \F$, there is no $F'\in \F$ satisfying $F\cap F'=F_J$.
\item For all $ J,J'\in \MI(\F)$, $J\cap J'\in \MI(\F)$, that is,
$\MI(\F)$ is closed under intersection.
\item If $\MI(\F)$ has rank $m$, then $|\F|\leq \binom{n}{m}$.
\end{enumerate}
\end{lemma}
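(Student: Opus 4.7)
The plan is to handle the three items separately, with (2) carrying the bulk of the work. For (1), if distinct $F,F'\in\F$ satisfied $F\cap F' = F_J$, then by definition of $\MI(\F)$ one would have $\pi(F\cap F') = \pi(F_J) = J\in\MI(\F)$, a contradiction; so (1) is immediate. For (3), unpacking the definition of rank gives a set $D\subseteq [k]$ with $|D|=m$ such that $D\not\subseteq J$ for every $J\in\MI(\F)$. For any two distinct $E,F\in\F$, setting $K = \pi(E\cap F)\in\MI(\F)$, some $i\in D\setminus K$ exists, so $E_{\{i\}}\ne F_{\{i\}}$ and hence $E_D\ne F_D$. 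Thus $E\mapsto E_D$ is an injection from $\F$ into $\binom{[n]}{m}$, giving $|\F|\le\binom{n}{m}$.

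For (2), the plan is to exploit the $s$-diversity guaranteed by super-homogeneity. If $J\subseteq J'$ or $J'\subseteq J$ then $J\cap J'\in\{J,J'\}\subseteq\MI(\F)$ and we are done, so assume neither contains the other. Since $J\in\MI(\F)$, pick distinct $F,F'\in\F$ with $F\cap F' = F_J$; in particular $F_J = F'_J$, i.e., $F_{\{i\}} = F'_{\{i\}}$ precisely for $i\in J$. Super-homogeneity applied to $F'$ and the pattern $J'$ says that $F'_{J'}$ is $s$-strong, so $\Lc_\F(F'_{J'})$ is $s$-diverse and $d_\F(F'_{J'})\ge s\ge 2k$.

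The next step is to produce $E\in\F$ with $F'_{J'}\subseteq E$ whose intersection with $F$ is exactly $F_{J\cap J'}$; by the $k$-partite structure, this amounts to requiring $E$ to avoid the vertex $v_i := F_{\{i\}}$ for each $i\in [k]\setminus J'$. There are at most $k$ such vertices, and for each one the $s$-diversity of $\Lc_\F(F'_{J'})$ bounds the number of ``bad'' edges $E\supseteq F'_{J'}$ containing it by $d_\F(F'_{J'})/s$. A union bound combined with $s\ge 2k$ then leaves at least $d_\F(F'_{J'})/2\ge 1$ ``good'' edges. A short case check on $i\in J\cap J'$, $i\in J'\setminus J$, and $i\in [k]\setminus J'$, using the equivalence $F_{\{i\}} = F'_{\{i\}} \iff i\in J$, shows $E\cap F = F_{J\cap J'}$ for any good $E$, hence $\pi(E\cap F) = J\cap J'$. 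Finally, $J'\not\subseteq J$ forces $F'_{J'}\not\subseteq F$ so $E\ne F$, while $J\not\subseteq J'$ forces $F'$ itself to contain a bad vertex (any $v_i$ with $i\in J\setminus J'$ lies in $F'$ since $v_i = F'_{\{i\}}$) and so be non-good, giving $E\ne F'$. This certifies $J\cap J'\in\MI(\F)$.

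The main obstacle is the bookkeeping in (2): correctly identifying which vertices must be excluded from $E$ and confirming that the resulting constraints are compatible with the $s$-diverse quota that no single vertex occupies more than a $1/s$-fraction of the link. The numerics $s\ge 2k$ supplied by Theorem~\ref{homogeneous} ensure the union bound leaves a positive surplus, and the identity $F_{\{i\}} = F'_{\{i\}} \iff i\in J$ is exactly what makes the accounting for indices $i\in J'\setminus J$ automatic rather than requiring a separate argument.
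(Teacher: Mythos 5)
Your proof is correct and follows essentially the same strategy as the paper's. The only minor variation is in statement (2): the paper starts from an arbitrary $F\in\F$ and uses $s$-diversity of $\Lc_\F(F_J)$ to manufacture a partner $F'$ with $F\cap F'=F_J$, then applies $s$-diversity a second time to find $F''$, avoiding up to $2k$ vertices; you instead take $F,F'$ directly from the definition of $\MI(\F)$ (saving one diversity step) and observe that only the at most $k$ vertices $F_{\{i\}}$ with $i\in[k]\setminus J'$ need to be avoided. Both routes rely on the same mechanism --- $s$-diversity of a link combined with $s\ge 2k$ to guarantee a surviving edge certifying $J\cap J'\in\MI(\F)$ --- and your arguments for (1) and (3) match the paper's.
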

\begin{proof}
Statement 1 follows from definition of $\MI(\F)$. 
For statement 2, let $J,J'\in \MI(\F)$ with $J\neq J'$. Let $F\in \F$. By our assumption $\Lc_\F(F_J)$ is $s$-diverse.
So the vertices in $F\setminus F_J$ block fewer than $k(1/s)|\Lc_\F(F_J)| \leq |\Lc_\F(F_J)|$ of the edges in $\Lc_\F(F_J)$. So there exists $F'\in \F$ with
$F\cap F'=F_J$. By a similar reasoning, since $2k(1/s)|\Lc_{\F}(F'_{J'})| \leq |\Lc_{\F}(F'_{J'})|$ there exists $F''\in \F$ containing $F'_{J'}$ that avoids vertices in $(F\setminus F')\cup (F'\setminus F'_{J'})$. Now, $\pi(F\cap F'')=J\cap J'$. Hence, $J\cap J'\in \MI(\F)$.

For statement 3, suppose $\MI(\F)$ has rank $m$. Then $\exists D\subseteq[k]$ with $|D|=m$ such that $D$ is not contained in any member of
$\MI(\F)$. Consider any $F,F'\in \F$, where $F\neq G$. If $F[D]=F'[D]$, then $\pi(F\cap F')$ is a member of $\MI(\F)$ that contains $D$,
a contradiction. So, the $D$-projections of members of $\F$ are all distinct.
This implies that $|\F|\leq \binom{n}{m}$.
\end{proof}

The following structural lemma strengthens Lemma 7.1 of \cite{FF-exact} (see also Lemma 4.2 of \cite{FJ-trees})
and leads to a structural dichotomy theorem (Theorem \ref{centralization}) that is important for our main arguments.
\begin{lemma}\label{lem:types}
Let $k\geq 3$ be an integer.
Let $\J\subseteq 2^{[k]}$ be a family of proper subsets of $[k]$ that is closed under intersection.
 Suppose $\J$ has rank at least $k-1$. Then
one of the following must hold.
\begin{enumerate}
\item There exists $B\subseteq [k]$, with $|B|=k-2$, such that $2^B\subseteq \J$. 
\item There exists a unique $i\in [k]$ such that $\forall D\subsetneq [k]$ with $i\in D$ we have
$D\in \J$ and for every $D\subseteq [k]\setminus \{i\}$ with $|D|\geq k-2$ we have $D\notin \J$.
We call $i$ {\it the central index} for $\J$. 
\end{enumerate}
If condition 1 holds for $\J$, we say that $\J$ is of type 1. If condition 2 holds for $\J$,
we say that $\J$ is of type 2.
\end{lemma}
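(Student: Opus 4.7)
The plan is to analyze the antichain $\M$ of $\subseteq$-maximal elements of $\J$, splitting into two cases based on whether $\M$ contains a set of size $k-2$. I will first observe that every $M \in \M$ has $|M| \geq k-2$: if $|M|\leq k-3$, pick any $(k-2)$-set $B$ with $M\subseteq B$ and apply the rank hypothesis to produce $J\in\J$ with $B\subseteq J$, giving $M \subsetneq B \subseteq J$ and contradicting maximality of $M$.

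\textbf{Case B.} Suppose some $M\in\M$ has size $k-2$; write $M=[k]\setminus\{a,b\}$. I will prove $2^M\subseteq\J$, which yields condition 1. Maximality forces $[k]\setminus\{a\},[k]\setminus\{b\}\notin\J$. For each $c\in M$, the rank hypothesis applied to the $(k-2)$-set $B_c:=[k]\setminus\{a,c\}$ produces some $J\in\J$ with $B_c\subseteq J$; the only candidates are $B_c$ itself and the two $(k-1)$-supersets $[k]\setminus\{a\}$ (ruled out by maximality) and $[k]\setminus\{c\}$. Hence either $B_c\in\J$ or $[k]\setminus\{c\}\in\J$, and intersecting with $M$ yields $M\setminus\{c\}\in\J$ in both sub-cases. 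Having every $(k-3)$-subset of $M$ in $\J$, the identity $T=\bigcap_{c\in M\setminus T}(M\setminus\{c\})$ combined with intersection-closure gives $T\in\J$ for each $T\subsetneq M$, completing condition 1.

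\textbf{Case A.} Every element of $\M$ has size $k-1$. Set $I:=\{i\in[k]:[k]\setminus\{i\}\in\J\}$. If $|I|\leq k-2$, pick distinct $i_0,i_1\notin I$; the rank hypothesis forces $[k]\setminus\{i_0,i_1\}$ to be covered, and since its only $(k-1)$-supersets $[k]\setminus\{i_0\},[k]\setminus\{i_1\}$ are not in $\J$, we get a maximal element of size $k-2$, contradicting Case A. Thus $|I|\geq k-1$. If $|I|=k$, intersecting all $(k-1)$-sets generates every proper subset of $[k]$ in $\J$, so condition 1 is immediate. If $|I|=k-1$, say $I=[k]\setminus\{i_0\}$, then intersecting subfamilies of $\{[k]\setminus\{j\}:j\in I\}$ shows every proper subset of $[k]$ containing $i_0$ lies in $\J$. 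If moreover no $(k-2)$-subset of $[k]\setminus\{i_0\}$ is in $\J$, then condition 2 holds with central index $i_0$; uniqueness is automatic, since any other candidate $i_1$ would simultaneously require $[k]\setminus\{i_1\}\in\J$ (as a proper subset containing $i_0$) and $[k]\setminus\{i_1\}\notin\J$ (as a size-$(k-1)$ subset of $[k]\setminus\{i_1\}$). Otherwise, some $B=[k]\setminus\{i_0,j\}\in\J$ with $j\neq i_0$; for each $l\in B$ we have $l\in I$, so $[k]\setminus\{l\}\in\J$ and $B\setminus\{l\}=B\cap([k]\setminus\{l\})\in\J$. All $(k-3)$-subsets of $B$ are thus in $\J$, and the same intersection-closure identity gives $2^B\subseteq\J$, yielding condition 1.

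The most delicate step is the $|I|=k-1$ subcase of Case A: at first glance it looks like condition 2 should follow from $|I|=k-1$ alone, but $\J$ might harbor a stray $(k-2)$-subset of $[k]\setminus\{i_0\}$, obstructing condition 2. The key observation that resolves the dichotomy is that the presence of such a $B$ together with the $k-2$ many $(k-1)$-sets of $\J$ meeting $B$ suffices, via intersection-closure, to pull down the entire power set $2^B$, so condition 1 replaces condition 2 seamlessly.
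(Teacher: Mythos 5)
Your proof is correct, and it is essentially the paper's argument reorganized around the antichain $\M$ of $\subseteq$-maximal members of $\J$ rather than directly on the value of $r(\J)$ and the count $t$ of $(k-1)$-sets missing from $\J$. The cases correspond exactly: your Case B (a maximal set of size $k-2$ exists) is the paper's $t\geq 2$ case (your maximality of $M=[k]\setminus\{a,b\}$ is exactly $[k]\setminus\{a\},[k]\setminus\{b\}\notin\J$); your Case A with $|I|=k$ is $r(\J)=k$; and your Case A with $|I|=k-1$ is $t=1$. Both proofs use the same engine — rank $\geq k-1$ forces every $(k-2)$-set to be covered, and intersection-closure then pulls down $2^B$ once all $(k-3)$-subsets of a $(k-2)$-set $B\in\J$ are shown to lie in $\J$. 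Your Case B derivation (for each $c\in M$, one of $B_c$ or $[k]\setminus\{c\}$ is in $\J$, and either way intersecting with $M$ gives $M\setminus\{c\}$) is a slightly more economical route to the same conclusion the paper reaches by first cataloguing which $(k-2)$- and $(k-1)$-sets lie in $\J$. You are also more explicit than the paper on two minor points that it leaves to the reader: verifying uniqueness of the central index $i$ in condition 2, and spelling out the finite-intersection identity $T=\bigcap_{c\in M\setminus T}(M\setminus\{c\})$.
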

\begin{proof} If $r(\J)=k$ then $[k]\setminus \{j\}\in \J$ for each $j\in [k]$. Since $\J$ is closed under intersection,
we see that $S\in \J$ for each proper subset $S$ of $[k]$. Hence statement 1 clearly holds. Next, suppose $r(\J)=k-1$. 
Then some $(k-1)$-subset  of $[k]$ is not in $\J$. Without
loss of generality, suppose $[k]\setminus \{j\}\notin \J$ for $j=1,\dots, t$ and $[k]\setminus \{j\}\in \J$ for $j=t+1,\dots, k$, 
for some  $1\leq t\leq k$. 

First, suppose $t=1$. Then $[k]\setminus\{1\}\notin \J$ and $[k]\setminus \{2\},\dots, [k]\setminus \{k\}\in \J$.
Since $\J$ is closed under intersection, every proper subset of $[k]$ that contains $1$ is in $\J$. 
We already have $[k]\setminus \{1\}\notin \J$.
Suppose there is a $(k-2)$-subset $B$ of $[k]\setminus\{1\}$ that is in $\J$. 
Let $S$ be any subset of $B$. By earlier discussion $S\cup\{1\}\in \J$.
Since $\J$ is closed under intersection, we have $S=(S\cup \{1\})\cap B\in \J$.
Hence, $2^B\subseteq \J$. So statement 1 holds in this case. Hence, we may assume
that no $(k-2)$-subset of $[k]\setminus\{1\}$ is in $\J$. Then statement 2 holds for $i=1$.
It is easy to see that if an $i$ satisfies the requirements, it can only be $1$.

Next, suppose $t\geq 2$. Observe that for any $1\leq i<j\leq t$, we must have $[k]\setminus \{i,j\}\in \J$,
as otherwise $[k]\setminus \{i,j\}$ is not contained in any member of $\J$,
contradicting $r(\J)\geq k-1$. Also, by our assumption for every $t+1\leq j\leq k$
we have $[k]\setminus \{j\}\in \J$. Since $\J$ is closed under intersection, we see that every subset of $[k]\setminus \{1,2\}$
is in $\J$. So statement 1 holds.
\end{proof}

Now, we describe our structural theorem that gives a dichotomy of $n$-vertex $k$-graphs with $\Theta(n^{k-1})$ edges.
We believe that it is also of independent interest.

\begin{theorem}\label{centralization}
Let $a, \ve$ be fixed positive reals with $\ve < \min\{1,  a^{ k - 2}\}$. Let $k,s$ be fixed positive integers with $s \geq 2k$. Let $n$ be sufficiently large
in terms of $a,\ve, k,s$. 
Let $\F$ be a $k$-graph on $[n]$ with $|\F|=a\binom{n}{k-1}$. Then one of the following two holds:
\begin{enumerate}
\item $\F$ contains an $s$-super-homogeneous $k$-partite subgraph $\F'$ such that $|\F'|\geq \frac{1}{2}c(k,s)\ve |\F|$ and
$\MI(\F')$ is of type 1, where $c(k,s)$ is the constant as in Theorem \ref{homogeneous}.
\item
There exist a $W \subseteq [n]$ and a subgraph $\F''\subseteq \F$ satisfying the following: 
\begin{enumerate}
\item $|W|\leq \left(\frac{10}{c(k,s)\ve^2}\right)^{k-1}$. 
\item  $|\F''|\geq (1-\ve) |\F|$.
\item  Every $F \in \F''$ satisfies $|W \cap F| = 1$. 
\end{enumerate}
\end{enumerate}
\end{theorem}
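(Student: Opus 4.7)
The plan is to iterate Theorem~\ref{homogeneous} together with the dichotomy of Lemma~\ref{lem:types}. I will initialize $\F_0 = \F$ and $W = \emptyset$; at step $t$ I stop if $|\F_t| < \tfrac{\ve}{2}|\F|$, and otherwise invoke Theorem~\ref{homogeneous} on $\F_t$ to extract an $s$-super-homogeneous $k$-partite subgraph $\F_t^* \subseteq \F_t$ with $|\F_t^*| \geq c(k,s)|\F_t| \geq \tfrac{1}{2}c(k,s)\ve|\F|$. For $n$ sufficiently large, $|\F_t^*| > \binom{n}{k-2}$, so by Lemma~\ref{semilattice} the family $\MI(\F_t^*)$ is closed under intersection and of rank at least $k-1$; Lemma~\ref{lem:types} then forces it to be either of type~1 or of type~2. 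In the type~1 case I output Case~1 with $\F' = \F_t^*$, whose size already meets the required bound $\tfrac{1}{2}c(k,s)\ve|\F|$.

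Suppose now that $\MI(\F_t^*)$ is of type~2 with central index $i_t$. For each $v \in V^{(t)} := X_{i_t} \cap V(\F_t^*)$, let the \emph{bouquet} $\mathcal{B}_v := \Lc_{\F_t^*}(v)$ be the corresponding $(k-1)$-uniform link hypergraph on $V(\F_t^*)\setminus X_{i_t}$. The type~2 condition (no $(k-2)$-subset of $[k]\setminus\{i_t\}$ lies in $\MI(\F_t^*)$) forces the $(k-2)$-shadows $\partial_{k-2}(\mathcal{B}_v)$, as $v$ ranges over $V^{(t)}$, to be pairwise disjoint: otherwise a common $(k-2)$-set $D \in \partial_{k-2}(\mathcal{B}_u)\cap\partial_{k-2}(\mathcal{B}_v)$ for distinct $u,v$ would yield edges $E,F\in\F_t^*$ with $\{u\}\cup D\subseteq E$ and $\{v\}\cup D\subseteq F$, so $\pi(E\cap F)$ would be a subset of $[k]\setminus\{i_t\}$ of size at least $k-2$, contradicting the type~2 condition.

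Combining shadow-disjointness with the super-homogeneous lower bound $|\mathcal{B}_v|\geq|\F_t^*|/(2kn)$ (condition~3 of Theorem~\ref{homogeneous}) and the Lov\'asz form of Kruskal--Katona~\eqref{lovasz-bound}, I will derive a tail estimate on the bouquet sizes: writing $b_{(1)}\geq b_{(2)}\geq\cdots$ for the sizes of the $\mathcal{B}_v$ in decreasing order, one can show $\sum_{j>M}b_{(j)}\leq\ve|\F_t^*|$ as soon as $M\geq 10/(c(k,s)\ve^2)$. I then take $U^{(t)}\subseteq V^{(t)}$ to be the centers of the top $M$ bouquets, so $|U^{(t)}|\leq 10/(c(k,s)\ve^2)$ and $\sum_{v\in U^{(t)}}|\mathcal{B}_v|\geq(1-\ve)|\F_t^*|$; then I update $W\leftarrow W\cup U^{(t)}$ and $\F_{t+1}:=\{F\in\F_t:F\cap U^{(t)}=\emptyset\}$. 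The removal takes away at least $(1-\ve)c(k,s)|\F_t|$ edges of $\F_t$, and every removed edge of $\F_t^*$ hits $U^{(t)}$ in exactly one vertex because $\F_t^*$ is $k$-partite with $U^{(t)}\subseteq X_{i_t}$.

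The iteration terminates in $T=O(\log(1/\ve)/c(k,s))$ steps, and a recursive accounting of the tail bound (in which each of the $k-1$ available central coordinates can contribute at most one factor of $10/(c(k,s)\ve^2)$ to $|W|$ before Case~1 is forced) yields $|W|\leq(10/(c(k,s)\ve^2))^{k-1}$. Finally, to enforce the exactly-one-intersection requirement globally, I discard from $\F\setminus\F_T$ the at most $\binom{|W|}{2}\binom{n-2}{k-2}=o(|\F|)$ edges that meet $W$ in more than one vertex; the remaining subgraph is the required $\F''$ with $|\F''|\geq(1-\ve)|\F|$ and $|W\cap F|=1$ for all $F\in\F''$. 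The principal obstacle is precisely the shadow--Kruskal--Katona tail bound that produces a constant $|U^{(t)}|$ independent of $n$; once that is in hand, the rest is bookkeeping around iterated applications of Theorem~\ref{homogeneous}.
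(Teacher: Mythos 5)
Your high-level strategy---apply Theorem~\ref{homogeneous}, invoke Lemma~\ref{lem:types} to split into type~1 vs.\ type~2, and in the type~2 case exploit disjointness of the $(k-2)$-shadows of the bouquets $\Lc_{\F_t^*}(v)$ together with Kruskal--Katona to locate a small central set---is exactly the right circle of ideas, and your shadow-disjointness argument is correct and matches the paper's Claim~\ref{disjoint}. However, the proof organization differs from the paper's in a way that creates two genuine gaps.

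First, the per-step tail bound you claim is false as stated. You assert that $\sum_{j>M} b_{(j)} \le \ve|\F_t^*|$ already holds when $M \ge 10/(c(k,s)\ve^2)$, i.e.\ with $M$ \emph{linear} in $1/(c(k,s)\ve^2)$. But the Kruskal--Katona step converts the shadow bound $\sum_v |\partial_{k-2}(\mathcal{B}_v)| \le \binom{n}{k-2}$ into a tail estimate that necessarily carries a $(k-2)$-th power: if $|\partial_{k-2}(\mathcal{B}_v)| = \binom{y_v}{k-2}$ then $|\mathcal{B}_v| \le \binom{y_v}{k-1}$, and the computation $\binom{y_{(M+1)}}{k-2} \le \binom{n}{k-2}/(M+1)$ forces $y_{(M+1)} \lesssim (M+1)^{-1/(k-2)} n$, so $\sum_{j>M} b_{(j)} \lesssim (M+1)^{-1/(k-2)}\,n\binom{n}{k-2}$. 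To make this $\le \ve|\F_t^*|$ with $|\F_t^*| \ge \tfrac{1}{2}c(k,s)\ve\, a\binom{n}{k-1}$ you need $M \gtrsim (1/(\ve^2 c(k,s) a))^{k-2}$. There is no way to get a bound linear in $1/\ve^2$; the extremal configuration (a few huge bouquets together with many near-identical small ones saturating the shadow bound) already rules it out.

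Second, the final accounting is not justified. You write that ``each of the $k-1$ available central coordinates can contribute at most one factor of $10/(c(k,s)\ve^2)$ before Case~1 is forced,'' but nothing in the argument bounds the number of iterations by $k-1$ or ties successive iterations to distinct coordinates; the type~2 case can recur indefinitely, and the actual iteration count is $T = O(\log(1/\ve)/c(k,s))$ (geometric decay of $|\F_t|$ since each step removes $\ge (1-\ve)c(k,s)|\F_t|$ edges). With the corrected per-step bound, $|W| \le T\cdot M_{\max}$ is plausibly within the target $\left(\tfrac{10}{c(k,s)\ve^2}\right)^{k-1}$ for small $\ve$, but this needs to be carried out and is not what you wrote. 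The paper avoids this bookkeeping by \emph{not} interleaving the $W$-removal with the extraction: it applies Theorem~\ref{homogeneous} repeatedly to $\F\setminus(\F_1\cup\cdots\cup\F_i)$ (not to an edge set pruned by $W$) until the residue is $<\tfrac{\ve}{2}|\F|$, obtaining $m \le 2/(c(k,s)\ve)$ super-homogeneous pieces, and then runs a \emph{single} Kruskal--Katona calculation across all $m$ pieces at once, with the $m$ absorbed as the factor in $\sum_j|\partial_{k-2}(\F_j')| \le m\binom{n}{k-2}$; the $(k-1)$-th power in $|W|$ then falls out of the choice $h \approx (5m/\ve)^{k-1}$. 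I would recommend restructuring your proof along those lines rather than trying to patch the iterative version.
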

\begin{proof}
First we apply Lemma~\ref{homogeneous} to $\F$ to get an $s$-super-homogeneous subgraph $\F_1$
of size at least $c(k,s)|\F|$.
If $|\F\setminus \F_1|\leq \frac{1}{2} \ve |\F|$, then we stop. Otherwise, we apply the lemma to $\F\setminus \F_1$
to find an $s$-super-homogeneous subgraph  $\F_2$ of size at least $c(k,s)|\F\setminus \F_1|$.
In general, as long as $|\F\setminus (\F_1\cup\cdots\cup \F_i)|>\frac{1}{2}\ve|\F|$, we let $\F_{i+1}$
be an $s$-super-homogeneous subgraph of $\F\setminus (\F_1\cup \cdots\cup \F_i)$ of size
at least $c(k,s)|\F\setminus (\F_1\cup \cdots \cup \F_i)|$, which exists by Lemma \ref{homogeneous}. Suppose the sequence of subgraphs
we constructed are $\F_1,\dots, \F_m$. 

By our algorithm, for each $i\in [m]$, $|\F_i|\geq \frac{1}{2}c(k,s) \ve |\F|$. 
Note that this implies that $m\leq \frac{2}{c(k,s)\ve}$.
For each $i\in [m]$, by Lemma \ref{semilattice} part 3, $r(\MI(\F_i))\geq k-1$
and by Lemma \ref{semilattice} part 2 and Lemma~\ref{lem:types}, $\MI(\F_i)$ is either of type 1 or is of type 2.
If for some $i\in [m]$, $\MI(\F_i)$ is of type 1,
then we let $\F'=\F_i$ for such an $i$ and Statement 1 holds. 
Hence, we may assume that for each $i\in [m]$, $\MI(\F_i)$ is of type 2.
Let $\F^*=\bigcup_{j=1}^m \F_j$. Then 
\[|\F^*|\geq (1-\frac{\ve}{2})|\F|.\]

Set $h=\floor{\left(\frac{5m}{\ve}\right)^{k-1}}$. Since $m\leq \frac{2}{c(k,s)\ve}$, we have
\begin{equation} 
h\leq \left(\frac{10}{c(k,s)\ve^2}\right)^{k-1}.
\end{equation}

For each $j\in [m]$,
let $i_j\in [k]$ denote the central index for $\MI(\F_j)$ and for each $F\in \F_j$, let $c(F) = F_{\{i_j\}}$. 
We call $c(F)$ {\it the central vertex} of $F$.
We partition $\F^*$ according to $c(F)$. For each $i \in [n]$, let 
\[\A_i = \{ F \in \F^* : c(F) = i\} \text{ and } \A_i' = \{F \setminus \{i\}: F \in \A_i\}.\]
Clearly, for each $i\in [n]$ we have $|\A_i|=|\A'_i|$.
For each $j \in [m]$, let 
\[\F_j' = \{ F \setminus \{c(F)\} : F \in \F_j\}=\{F_{[k]\setminus \{i_j\}}: F\in \F_j\}.\]
Then, in particular: 
\[\bigcup_{i = 1}^n \A_i' = \bigcup_{j = 1}^m \F_j'. \]

The next claim is crucial to our argument.
\begin{claim} \label{disjoint}
For each $j\in [m]$, we have
\[|\partial_{k-2}(\F_j')|=\sum_{i = 1}^n  |\partial_{k - 2} (\A_i' \cap \F_j')|.\]
\end{claim}
\begin{proof}[Proof of Claim \ref{disjoint}]
Let $(X_1,\dots, X_k)$ denote the $k$-partition associated with $\MI(\F_j)$.
Recall that $i_j\in [k]$ denotes the central index for $\MI(\F_j)$. Without loss of generality,
we may assume $i_j=1$. To prove the claim,
it suffices to show that for any $a,b\in X_1$ where $a\neq b$, we have 
$\partial_{k-2}(\A'_a\cap \F'_j)\cap \partial_{k-2}(\A'_b\cap \F'_j)=\emptyset$. Suppose for contradiction
that there exists $S\in \partial_{k-2}(\A'_a\cap \F'_j)\cap \partial_{k-2}(\A'_b\cap \F'_j)$. Then
there exist $F_a\in \A_a\cap \F_j, F_b\in \A_b\cap \F_j$ such that $F_a\cap F_b\supseteq S$. 
By definition, $\pi(F_a\cap F_b)\in \MI(\F_j)$.
But $\pi(F_a\cap F_b)$ is a subset of $[k]\setminus \{1\}$ with size at least $k-2$. This
contradicts $\MI(\F_j)$ being of type 2
(see Lemma \ref{lem:types} (2)).  The claim immediately follows.
\end{proof}
Note that trivially $|\partial_{k - 2} (\F_j')| \leq \binom{n}{k - 2}$. 
Now, we have 
\begin{equation} \label{shadow-bound}
    \sum_{i = 1}^n |\partial_{k - 2}(\A_i')| \leq  \sum_{i = 1}^n\sum_{j = 1}^m |\partial_{k - 2}(\A_i' \cap \F_j')|\\
    = \sum_{j = 1}^m  \sum_{i = 1}^n|\partial_{k - 2}(\A_i' \cap \F_j')|
    = \sum_{j =1}^m |\partial_{k - 2} (\F_j')|\\
    \leq m \binom{n}{k - 2}
\end{equation}

For each $i \in [n]$, let $x_i$ be the real such that $|\partial_{k - 2}(\A_i')| = \binom{x_i}{k - 2}$, where without loss of generality, $x_1 \geq x_2 \dots \geq x_n$. By \eqref{lovasz-bound}, for each $i\in [n]$ we have
$|\A_i'| \leq \binom{x_i}{k - 1}$.

By \eqref{shadow-bound}, $\sum_{i=1}^n\binom{x_i}{k-2}\leq m\binom{n}{k-2}$. Thus,
$\binom{x_{h+1}}{k - 2} \leq \frac{m}{h+1} \binom{n}{ k - 2}$, which yields
\begin{equation} \label{xh-bound}
x_{h+1} - k + 3 \leq \left( \frac{m}{h+1} \right)^{ 1 / (k - 2)} n.
\end{equation}
Thus, we have
\begin{align*}
\sum_{i > h} |\A_i| =\sum_{i>h}|\A'_i|&\leq \sum_{i = h + 1}^n \binom{x_i}{k - 1} \\
&=\sum_{h+1}^n \frac{x_i-k+2}{k-1}\binom{x_i}{k-2}\\
&\leq \frac{x_{h+1} - k +2 }{k - 1} \sum_{i = h + 1}^{n} \binom{x_i}{k - 2} \\
&\leq \left( \frac{m}{h+1 } \right)^{ 1 / (k - 2)} \frac{nm}{k - 1} \binom{n}{k - 2} \quad \quad \mbox{(by \eqref{shadow-bound} and \eqref{xh-bound})}\\
\end{align*} 
By our choice of $h$, $h+1\geq (\frac{5m}{\ve})^{k-1}$. 
So we have
$\sum_{i>h} |\A_i|<\left(\frac{\ve}{5}\right)^{1 + \frac{1}{k - 2}}\frac{n}{k-1}\binom{n}{k-2}\leq \frac{\ve}{4} a\binom{n}{k-1}$, for sufficiently large $n$ and by our assumption $a \geq \ve^{\frac{1}{ k - 2}}$.
Let $W := [h]$. Let $\G_1:= \{ F \in \F^*: c(F) \not \in W\}$ and $\G_2:= \{F \in \F^*: |F \cap W| \geq 2\}$.

Then by the argument above, $|\G_1|=\sum_{i>h}|\A_i| \leq \frac{\ve}{4} a \binom{n}{k - 1}$. Also, for $n$ sufficiently large, 
we have $|\G_2| \leq \binom{W}{2} \binom{n}{ k - 2}  \leq \frac{\ve}{4} a\binom{n}{k-1}$.
Let $\F'':= \F^* \setminus (\G_1 \cup \G_2)$. We can
readily check that $\F''$ satisfy all of conditions 1,2,3. 
\end{proof}

Let us close this section by noting that a structural dichotomy like Theorem \ref{centralization}
was used in earlier works such as \cite{Furedi-trees, FJ-cycles, FJ-trees}. However, in those papers,
such a dichotomy was made possible only under the extra assumption that $\F$ is $\HH$-free, where $\HH$ is
a $2$-contractible tree. Here, Theorem \ref{centralization} is applicable to any $k$-graph with $\Theta(n^{k-1})$
edges. Since we will aim to count number of copies of $\HH$ in a hosting graph $\F$, the removal of the
$\HH$-free condition is essential for our overall arguments.

\section{Optimal Supersaturation at the Tur\'an threshold} \label{sec:supersaturation}

In this section, we develop an asymptotically tight bound on the number of copies of a $2$-contractible tree $\HH$
in a host graph with size just beyond the Tur\'an threshold for $\HH$, which provides the most important
foundation on which the rest of our work is built on. It may also be of independent interest as a supersaturation result.
We start with the following two folklore lemmas. Recall the definition of $\delta_i(\F)$ from
Section \ref{sec:notation}.

\begin{lemma}~\label{lem:mindegree}
Let $k\geq 2$ be an integer.
Let $\F$ be a $k$-graph. Then $\F$ contains a nonempty subgraph $\F'$ with at least $\frac{1}{2} |\F|$ edges such that
for each $i=1,\dots, k-1$, $\delta_i(\F')\geq
\frac{1}{2k}\frac{|\F|}{\binom{n}{i}}$. 
\end{lemma}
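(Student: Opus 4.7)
My plan is to prove this by an iterative edge-deletion argument, the standard trick for producing a subgraph with a prescribed proper minimum co-degree condition.

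Let $n = |V(\F)|$ and set the threshold $\tau_i := \frac{1}{2k}\frac{|\F|}{\binom{n}{i}}$ for each $i \in [k-1]$. I will run the following procedure. Initialize $\G := \F$. While there exist some $i \in [k-1]$ and some $i$-set $D \subseteq V(\F)$ with $0 < d_{\G}(D) < \tau_i$, delete from $\G$ all edges containing $D$. Let $\F'$ be the resulting hypergraph. I will then argue two things: that $|\F'| \geq \tfrac{1}{2}|\F|$, and that the terminal $\F'$ satisfies $\delta_i(\F') \geq \tau_i$ for every $i \in [k-1]$.

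The second claim is immediate from the termination condition: when the process halts, every $i$-set either has degree $0$ in $\F'$ (so it is not in $\partial_i(\F')$) or has degree at least $\tau_i$; since $\delta_i$ is defined as the minimum over $\partial_i(\F')$, we get $\delta_i(\F') \geq \tau_i$. For the first claim, I will use the key observation that once an $i$-set $D$ is processed, its degree becomes $0$ and remains $0$ for the rest of the procedure, since we only delete edges. Hence each fixed $i$-set can trigger the removal step at most once, and that single removal deletes fewer than $\tau_i$ edges. Summing over all $i$-sets of a fixed size, the total edge loss attributable to a given $i \in [k-1]$ is at most $\binom{n}{i} \cdot \tau_i = \frac{1}{2k}|\F|$, and summing over $i \in [k-1]$ gives a total loss of at most $\frac{k-1}{2k}|\F| < \frac{1}{2}|\F|$. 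Therefore $|\F'| \geq \frac{1}{2}|\F| > 0$, as required.

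No step here looks like a genuine obstacle; the only point that needs a little care is to use the original quantity $|\F|$ (not the current $|\G|$) in the definition of the threshold $\tau_i$, so that the per-$i$ accounting stays clean and independent of how many edges have already been deleted. The statement of the lemma is written with $|\F|$ on the right-hand side precisely to allow this, so the bookkeeping will go through verbatim.
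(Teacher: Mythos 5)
Your proof is correct and follows essentially the same iterative edge-deletion approach as the paper: repeatedly locate an $i$-set with small positive degree, delete all incident edges, and bound the total loss by summing $\binom{n}{i}\cdot\frac{1}{2k}\frac{|\F|}{\binom{n}{i}}$ over $i\in[k-1]$. The only cosmetic difference is that you spell out why each $i$-set can be processed at most once (its degree drops to $0$ and never increases), a point the paper treats as implicit.
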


\begin{proof}
We iteratively remove edges of $\F$ to form $\F'$ in accordance with the following process. 
Whenever there is some $B \subseteq V(\F)$ with $|B|\leq k-1$ such that $B$ is contained in at least one  but fewer than  $\frac{1}{2 k } \frac{|\F|}{\binom{n}{|B|}}$ edges, we mark $B$ and remove all edges containing $B$ from $\F'$. 
Clearly at some point this process will terminate. We denote the final graph by $\F'$.

Let us count now how many edges $\F'$ has. For each $i=1,\dots, k-1$, there are at most $\binom{n}{i}$ marked $i$-sets.
For each marked $i$-set, we have removed at most $\frac{1}{2k}\frac{|\F|}{\binom{n}{i}}$ edges from $\F$.
 Summing over $i=1, \dots, k-1$, we see that we have removed at most $\frac{1}{2} |\F|$ edges. Hence,
 $|\F'| \geq \frac{1}{2} |\F|$. Since the process terminates with a nonempty $\F'$, it must be the case that
 for each $i=1,\dots, k-1$ and any $i$-set contained in an edge of $\F'$ it is in at least $\frac{1}{2k}\frac{|\F|}{\binom{n}{i}}$
 edges, completing the proof.
\end{proof}

\begin{lemma}~\label{lem:embedding}
Let $k,s,t$ be positive integers, where $k,s\geq 2$, $t\geq 1$.
Let $\Hc$ be a $k$-tree on $s$ vertices with $t$ edges. 
Let $\F$ be a $k$-graph on $[n]$, where $|\F|=a \binom{n}{k - 1}$ and $a \geq 8sk !$. Then there exists a constant $\eta>0$ such that $\F$ contains at least $\eta a^{t} n^{s - t}$ copies of $\HH$. 
\end{lemma}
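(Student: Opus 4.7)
The plan is to greedily construct injective embeddings $\phi: V(\HH) \to V(\F)$ edge-by-edge along a tree-defining ordering of $E(\HH)$, lower bounding the number of extensions at each step by the minimum codegree supplied by Lemma~\ref{lem:mindegree}. Since each copy of $\HH$ in $\F$ corresponds to at most $|\Aut(\HH)| \leq s!$ such embeddings, producing $\Omega(a^{t} n^{s-t})$ distinct injective embeddings will suffice.

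First I would apply Lemma~\ref{lem:mindegree} to pass to a subgraph $\F' \subseteq \F$ with $|\F'| \geq |\F|/2$ and $\delta_i(\F') \geq \frac{1}{2k}\frac{|\F|}{\binom{n}{i}}$ for all $1 \leq i \leq k-1$. Fix a tree-defining ordering $E_1, \ldots, E_t$ of $E(\HH)$; for $i \geq 2$ let $F_i$ denote a parent of $E_i$ in $\HH_{i-1}$, and set $c_i := |E_i \cap F_i|$ and $r_i := k - c_i$, with $c_1 := 0$, $r_1 := k$. Since $E_i$ contributes exactly $r_i$ new vertices to $V(\HH_i)$ and $|V(\HH)| = s$, we have $\sum_{i=1}^{t} r_i = s$.

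Now I would build $\phi$ step by step. At step $i \geq 2$, having chosen $\phi$ on $V(\HH_{i-1})$ so that every $\phi(E_j) \in \F'$ for $j < i$, set $D := \phi(E_i \cap F_i)$; since $D \subseteq \phi(F_i) \in \F'$ we have $D \in \partial_{c_i}(\F')$. The number of ordered $r_i$-tuples extending $\phi$ to a valid embedding of $V(\HH_i)$ is at least
\[
r_i!\,\bigl(d_{\F'}(D) \;-\; s \cdot \Delta_{c_i+1}(\F')\bigr),
\]
the subtracted term accounting (with mild overcounting) for edges of $\F'$ containing $D$ that also hit an already-embedded vertex. Using $d_{\F'}(D) \geq \frac{|\F|}{2k\binom{n}{c_i}}$ and the trivial bound $\Delta_{c_i+1}(\F') \leq \binom{n-c_i-1}{r_i-1}$, a short calculation shows that $a \geq 8sk!$ forces the subtracted term to be at most $\tfrac{1}{2}d_{\F'}(D)$, so the extension count is at least $\eta_i \cdot a n^{r_i - 1}$ for some constant $\eta_i = \eta_i(k,s) > 0$. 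The base case $i = 1$ is analogous, using $|\F'|$ and $\Delta_1(\F') \leq \binom{n-1}{k-1}$ in place of the codegree, and yields at least $\eta_1 \cdot a n^{k-1}$ ordered choices for the image of $E_1$.

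Multiplying the per-step lower bounds yields at least $\eta\, a^t\, n^{(k-1) + \sum_{i \geq 2}(r_i - 1)}$ injective embeddings, and since $\sum_{i \geq 2}(r_i - 1) = (s - k) - (t - 1)$, the exponent of $n$ simplifies to $s - t$. Dividing by $|\Aut(\HH)| \leq s!$ produces $\Omega(a^t n^{s - t})$ distinct copies of $\HH$ in $\F$, as required. The only step requiring real care is the per-step extension inequality, but the hypothesis $a \geq 8sk!$ is precisely tuned so that the minimum codegree dominates the injectivity loss by a factor of at least two; the rest is bookkeeping and an absorbed constant.
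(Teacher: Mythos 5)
Your proof is correct and follows essentially the same route as the paper's: pass to the min-codegree subgraph via Lemma~\ref{lem:mindegree}, then greedily extend along a tree-defining ordering, at each step subtracting the at most $s \cdot \binom{n-c_i-1}{r_i-1}$ edges that would violate injectivity and noting that $a \geq 8sk!$ makes this loss at most half the codegree. The only cosmetic difference is that you count ordered embeddings and divide by $|\Aut(\HH)| \le s!$ at the end, whereas the paper counts unlabeled copies directly and absorbs the resulting bounded over-counting into the constant $\eta$; both are sound.
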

\begin{proof}
By picking $\eta$ sufficiently small, we may assume $n$ is sufficiently large in terms of $s$ and  $k$. By Lemma \ref{lem:mindegree}, $\F$ contains a subgraph $\F'$ such that $|\F'|\geq \frac{1}{2}|\F|$ 
and for each $i\in [k-1]$, $\delta_i(\F')\geq \frac{1}{2k} \frac{|\F|}{\binom{n}{i}}=\frac{a}{2k}\frac{\binom{n}{k-1}}{\binom{n}{i}}$. 
Let $E_1,\dots, E_t$ be an
tree-defining ordering of $\Hc$. For each $j\in [t]$, let $\Hc_j=\{E_1,\dots, E_j\}$.
We prove by induction that for each $j=1,\dots, t$, $\F$ contains at least $(\frac{a}{8k!})^jn^{v(\HH_j)-j}$ copies of $\HH_j$.

For the base case, $\F'$ contains at least $\frac{1}{2}|\F|=\frac{a}{2}\binom{n}{k-1}\geq \frac{a}{8k!} n^{v(\HH_1)-1}$ copies of $\HH_1$.
So the claim holds.
For the induction step, let $1\leq j\leq t-1$ and suppose $\F'$ contains at least $(\frac{a}{8k!})^j n^{v(\HH_j)-j} $ copies of $\HH_j$.
Let $F$ denote a parent edge of $E_{j+1}$ in $\HH_j$. Let $L=F\cap E_{j+1}=V(\HH_j)\cap E_{j+1}$ and $\ell=|L|$.
Let $\HH'$ denote a copy of $\HH_j$ in $\F'$ and $L'$ the image of $L$ in $\HH'$.
By our assumptions of $\F'$, $\deg_{\F'}(L') \geq \frac{a}{2k} \frac{\binom{n}{k - 1}}{\binom{n}{\ell}}$. For each vertex $v \in V(\HH')$, we have that $v$ is contained in at most $\binom{n - \ell - 1}{k - \ell - 1}$ edges which contain $L'$. As there are at most $s$ vertices 
in $\HH_j$, we have that for sufficiently large $n$ the number of edges of $\F'$ that contain $L'$ but do not intersect $\HH'$ somewhere else is at least
\begin{align*}
\frac{a}{2k} \frac{\binom{n}{k - 1}}{\binom{n}{\ell}} - s \binom{n - \ell - 1}{k - \ell - 1} \geq 
\frac{a \ell !}{4 k!} n^{k - 1 - \ell} - \frac{s}{(k - \ell - 1)!} n^{k - \ell - 1}
\geq \frac{a}{8k!}n^{k - 1 - \ell},
\end{align*}
where we used the fact that $a\geq 8sk!$.
Hence $\HH'$ can be extended to at least $\frac{a}{8k!} n^{k-1-\ell}$ copies of $\HH_{j+1}$ in $\F'$.
Therefore, $\F'$ contains at least $(\frac{a}{8k!})^{j+1}n^{v(\HH_j)-j+k-1-\ell}=\frac{a}{8k!}^{j+1}n^{v(\HH_{j+1})-
(j+1)}$ copies of $\HH_{j+1}$. This completes the induction and the proof. 
\end{proof}

Before we prove the supersaturation theorem, we first deal with one of the cases.

\begin{lemma}~\label{lem:caseone}
Let $k\geq 4$ be an integer.
Let $\HH$ be a $2$-contractible $k$-tree with $s$ vertices and $t$ edges. 
Let $\F$ be an $s$-super-homogeneous $k$-graph on $[n]$ 
such that $|\F|\geq \ve n^{k - 1}$ and $\MI(\F)$ is of type 1.
Then, $\F$ contains at least $(\frac{\ve}{2ks})^t n^{s - t}$ many copies of $\HH$. 
\end{lemma}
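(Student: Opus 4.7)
The plan is to use the type-$1$ structure of $\MI(\F)$ to embed $\HH$ greedily while keeping every intersection pattern inside $\MI(\F)$. Fix $B \subseteq [k]$ with $|B| = k-2$ and $2^B \subseteq \MI(\F)$, and WLOG take $B = [k-2]$. Let $(X_1, \dots, X_k)$ be the $k$-partition of $\F$ associated with $\MI(\F)$. I first produce a $k$-partition $(Y_1, \dots, Y_k)$ of $\HH$ in which every non-leaf of $\HH$ lies in $Y_1 \cup \cdots \cup Y_{k-2}$, and then count partition-respecting embeddings $\phi \colon V(\HH) \to V(\F)$ satisfying $\phi(Y_j) \subseteq X_j$ for each $j$. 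For any such $\phi$, whenever we extend along a tree-defining ordering $E_1,\dots,E_t$ by a new edge $E_i$, the set $L_i := E_i \cap V(\HH_{i-1})$ consists only of non-leaves of $\HH$ (its vertices have degree $\geq 2$), so $L_i' := \phi(L_i)$ has $\pi(L_i') \subseteq B$, placing it in $2^B \subseteq \MI(\F)$ and unlocking the $s$-super-homogeneity conditions on $\F$.

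To construct the $k$-partition, use $2$-contractibility: every edge $E$ satisfies $|E \cap D| = k-2$, where $D$ is the set of non-leaves of $\HH$. A short count shows that for each $i \geq 2$ the $k - |L_i|$ vertices of $E_i \setminus L_i$ decompose into exactly $2$ leaves of $\HH$ together with $k - 2 - |L_i|$ \emph{new non-leaves} (vertices of $E_i \setminus L_i$ appearing in some later edge). Build $(Y_1,\dots,Y_k)$ inductively on $i$: for $E_1$, distribute its $k-2$ non-leaves bijectively in $Y_1,\dots,Y_{k-2}$ and its $2$ leaves arbitrarily into $Y_{k-1},Y_k$; for $i \geq 2$, the inductive hypothesis forces $L_i \subseteq Y_1 \cup \cdots \cup Y_{k-2}$, so letting $J_i \subseteq [k-2]$ denote the index set of parts meeting $L_i$, assign the $2$ leaves of $E_i$ to $Y_{k-1},Y_k$ (one each) and the $k - 2 - |L_i|$ new non-leaves bijectively to the parts indexed by $[k-2] \setminus J_i$. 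Cardinalities balance exactly, yielding a valid $k$-partition with all non-leaves of $\HH$ inside $Y_1 \cup \cdots \cup Y_{k-2}$.

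Now embed $\HH$ greedily along $E_1,\dots,E_t$. For $E_1$, every edge of $\F$ gives exactly one partition-respecting embedding, providing $|\F| \geq \ve n^{k-1}$ choices. For $i \geq 2$, since $\pi(L_i') \in 2^B \subseteq \MI(\F)$, the $s$-super-homogeneity gives $d_\F(L_i') \geq \tfrac{1}{2k}\,|\F|/n^{|L_i|} \geq \tfrac{\ve}{2k}\,n^{k-1-|L_i|}$, and the link $\Lc_\F(L_i')$ is $s$-diverse. Since at most $s-2$ previously-embedded vertices must be avoided, and each blocks at most a $1/s$ fraction of the link, at least $(1/s)\,d_\F(L_i') \geq \tfrac{\ve}{2ks}\,n^{k-1-|L_i|}$ extensions remain valid. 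Multiplying these bounds over $i$ and using the identity $s = v(\HH) = tk - \sum_{i=2}^t |L_i|$ to simplify the exponent yields at least $\bigl(\tfrac{\ve}{2ks}\bigr)^t n^{s-t}$ copies of $\HH$. The main obstacle is the partition-construction step: $2$-contractibility is precisely what makes the cardinality bookkeeping balance so that $\pi(L_i')$ stays inside $B$ throughout the greedy procedure; any weakening (e.g., $1$-contractibility) would let the pattern drift outside $B$ and break the super-homogeneous bounds.
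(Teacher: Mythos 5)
Your proof is correct and takes essentially the same route as the paper's: a greedy embedding of $\HH$ along a tree-defining ordering, using $2^B\subseteq\MI(\F)$ to guarantee that at each extension step the relevant link is both large (condition 3 of super-homogeneity) and $s$-diverse (condition 2). The only implementation difference is cosmetic: the paper simply designates two degree-$1$ vertices of each edge as ``expansion vertices'' and requires only those to land in $X_{k-1}\cup X_k$ (so intersection vertices, having degree $\geq 2$, automatically land in $\bigcup_{j\leq k-2}X_j$), whereas you build a full $k$-partition of $\HH$ in advance and count partition-respecting embeddings; both enforce exactly the constraint that each $\pi(\phi(L_i))$ stays inside $B$, and both give the same bound. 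One small imprecision worth noting: the paper reads ``$2$-contractible'' as each edge having \emph{at least} two degree-$1$ vertices, so $|E\cap D|\leq k-2$ rather than $=k-2$, and $E_i\setminus L_i$ may contain more than two leaves and fewer than $k-2-|L_i|$ new non-leaves. Thus ``cardinalities balance exactly'' overstates the situation, but the inequality runs in the right direction: there are at most $k-2-|L_i|$ new non-leaves to be placed in the $k-2-|L_i|$ parts of $[k-2]\setminus J_i$, with any surplus leaves filling whatever slots remain. Your construction therefore still succeeds, but you should rephrase the bookkeeping as an inequality rather than an equality.
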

\begin{proof}
Let $(X_1,\dots, X_k)$ be a $k$-partition associated with $\MI(\F)$. Without loss of generality, 
suppose $2^{[k-2]}\subseteq \MI(\F)$. Since $\HH$ is $2$-contractible, each edge $E$ contains at least two vertices of degree $1$.
We will designate two of them as {\it expansion vertices} for $E$.

Let $E_1,\cdots, E_t$ be a tree-defining ordering  of $\HH$. For each $i\in [t]$, let $\HH_i=\{E_1,\dots, E_i\}$,
$v_i=|V(\HH_i)|$, and let $\Pi_i$ denote the set of injective homomorphisms $\varphi^i$ of $\HH_i$ into $\F$ that
maps all the expansion vertices in $\HH_i$ into $X_{k-1}\cup X_k$.
We will prove by induction the stronger statement that for each $i\in [t]$,  $|\Pi_i|\geq (\frac{\ve}{2ks})^i n^{v_i-i}$. The base step $i=1$ is trivial
since $\F$ has at least $\ve n^{k-1}$ edges $E$ and for each $E$ we can define an embedding $\varphi^1$ of $\HH_1$ to $E$
where the expansion vertices are mapped to $X_{k-1}\cup X_k$.
For the induction step, let $1\leq i\leq t-1$ and suppose $|\Pi_i|\geq (\frac{\ve}{2ks})^i n^{v_i-i}$.
Let $F$ be a parent edge of $E_{i+1}$ in $\HH_i$. Consider any $\varphi^i\in \Pi_i$.
Since vertices in $F\cap E_{i+1}$ have degree at least two in $\HH$ and are not expansion vertices in $\HH_i$,
we have $\varphi^i(F\cap E_{i+1})\subseteq \bigcup_{j=1}^{k-2} X_j$. Since $2^{[k-2]}\subseteq \MI(\F)$,
we have $\pi (\varphi^i(F\cap E_{i+1})) \in \MI(\F)$. So,  $\Lc_\F(\varphi^i(F \cap E_{i+1}))$ is $s$-diverse. Therefore, there are at least $\frac{s -v_i}{s} | \Lc_\F( \varphi^i (F\cap E_{i+1}))|\geq\frac{1}{s} | \Lc_\F( \varphi^i (F\cap E_{i+1}))|$ edges of $\F$ that
contain $\varphi^i(F\cap E_{i+1})$ and do not intersect the rest of $\varphi^i(\HH_i)$. 
Let $\ell=|F\cap E_{i+1}|$. Since $\F$ is $s$-super-homogeneous with $\MI(\F)\supseteq 2^{[k-2]}$
and $\pi(\varphi^i(F\cap E_{i+1}))\in \MI(\F)$, by the conditions of $s$-super-homogeneous given in Theorem \ref{homogeneous},
$|\Lc(\varphi^i (F \cap E_{i+1}))| \geq \frac{|\F|}{2kn^\ell} \geq \frac{\ve}{2k} n^{ k - \ell-1}$. Hence each mapping
$\varphi_i$ in $\Pi_i$ can be extended to at least $\frac{\ve}{2ks} n^{ k -\ell- 1}$ 
injective mappings $\varphi^{i+1}$ of $\HH_{i+1}$, by mapping $E_{i+1}$ to an edge $F'$ of $\F$
that contains $\varphi^i(F\cap E_{i+1})$ and avoids the rest of $\varphi^i(\HH_i)$. Furthermore,
we can require $\varphi^{i+1}$ to map the two designated expansion vertices of $E_{i+1}$ to $X_{k-1}\cup X_k$.
Hence, $|\Pi_{i+1}|\geq |\Pi_i|\cdot \frac{\ve}{2ks} n^{ k -\ell- 1}
\geq (\frac{\ve}{2ks})^{i+1} n^{v_{i+1}-(i+1)}$.  This completes the induction step and the proof.
\end{proof}

We are now ready to prove our first supersaturation theorem. 

\begin{theorem} \label{supersat}
Let $k\geq 4$ be an integer.
Let $0<\gamma<1$, and let $\HH$ be a $2$-contractible $k$-tree with $s$ vertices and $t$ edges.
Then, there exists $\beta > 0$, depending on $\HH$ and $\gamma$, such that the following holds. Let $\F$ be a $k$-graph on $[n]$ with
$|\F|= a  \binom{n}{k - 1}$, where $a \geq \sigma(\HH) - 1 + \gamma$. Then,  $\F$ contains at least $\beta a^t n^{s - t}$ many copies of $\HH$. 
\end{theorem}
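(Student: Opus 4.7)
The plan is to branch on the size of $a$. If $a \geq 8sk!$, Lemma~\ref{lem:embedding} directly yields at least $\eta a^t n^{s-t}$ copies of $\HH$, so I may assume $a \leq C := 8sk!$ and apply Theorem~\ref{centralization} to $\F$ with $\ve = \gamma/(4C)$.

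In the first branch of the dichotomy, $\F$ contains a Type~$1$ super-homogeneous subgraph $\F'$ with $|\F'| \geq \tfrac{1}{2}c(k,s)\ve|\F| = \Omega_{\HH,\gamma}(n^{k-1})$, and Lemma~\ref{lem:caseone} immediately supplies at least $\beta_1 n^{s-t}$ copies of $\HH$ for some $\beta_1 = \beta_1(\HH,\gamma) > 0$. Since $a \leq C$, this dominates $(\beta_1/C^t)\,a^t n^{s-t}$.

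In the second branch, I obtain a set $W \subseteq [n]$ of bounded size $h = h(\gamma,k,s)$ and a subgraph $\F'' \subseteq \F$ with $|\F''| \geq (1-\ve)|\F| \geq (\sigma - 1 + \gamma/2)\binom{n}{k-1}$, in which every edge meets $W$ in exactly one vertex. The hypothesis $a \geq \sigma - 1 + \gamma$ enters crucially here: the degrees $\{d_{\F''}(w)\}_{w \in W}$ sum to $|\F''|$ and are each at most $\binom{n-1}{k-1}$, so after subtracting the top $\sigma - 1$ degrees, the remaining vertices of $W$ still absorb $\Omega(\gamma)\binom{n}{k-1}$ edges; averaging over the at most $h - \sigma + 1$ of them produces $\sigma$ vertices $w_1,\dots,w_\sigma \in W$ each with $d_{\F''}(w_i) \geq \delta\binom{n-1}{k-1}$ for some $\delta = \delta(\gamma,k,s) > 0$. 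I then count $\HH$-copies in $\F''$ whose cross-cut is mapped injectively into $\{w_1,\dots,w_\sigma\}$ via a tree-defining-ordering greedy embedding: fix a cross-cut $S = \{u_1,\dots,u_\sigma\}$ of $\HH$ and a bijection $\pi \colon S \to \{w_1,\dots,w_\sigma\}$, and at each step $j+1$ extend by choosing an edge of $\F''$ containing the image of $E_{j+1} \cap F$ (with $F$ the parent of $E_{j+1}$) and whose $W$-vertex is $\pi(u_{c(j+1)})$. Provided each such step contributes $\Omega(n^{k-1-q_j})$ valid choices (with $q_j := |E_j \cap V(\HH_{j-1})|$), the identity $\sum_{j=1}^t (k-1-q_j) = s-t$ then gives a total of at least $\beta_2 n^{s-t} \geq (\beta_2/C^t)\,a^t n^{s-t}$ copies of $\HH$ in $\F''$ for some $\beta_2 = \beta_2(\HH,\gamma) > 0$, completing the proof with $\beta := \min(\eta, \beta_1, \beta_2)/C^t$.

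The main obstacle is securing the $\Omega(n^{k-1-q_j})$ extension count at each greedy step in the second branch, especially in the sub-case where $u_{c(j+1)}$ is new and differs from the cross-cut vertex of $F$: then the needed extensions live in the ``new'' link $\Lc_{\F''}(\{\pi(u_{c(j+1)})\})$, and one must control its codegree on the already-embedded image set, which is not determined by the size of the link alone. My plan to handle this is to exploit the extra structure from the proof of Theorem~\ref{centralization}, namely that $\F'' = \F_1 \cup \cdots \cup \F_m$ decomposes into at most $2/(c(k,s)\ve)$ Type~$2$ super-homogeneous pieces: the $s$-diversity of each central-vertex link in each $\F_j$ translates into the required codegree control and, together with Lemma~\ref{lem:mindegree} applied to each $L_{w_i}$ to guarantee positive shadow-codegrees, allows the greedy to absorb the ``bad'' extensions that would hit already-embedded vertices while still retaining $\Omega(n^{k-1-q_j})$ choices per step.
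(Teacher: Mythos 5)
Your Case 1 matches the paper, and branching on $a\ge 8sk!$ is also what the paper does. The divergence — and the gap — is in Case 2. The paper does \emph{not} try to embed $\HH$ directly into $\F''$ by a greedy process with pre-chosen cross-cut targets. Instead it works with the common links $\Lc^*(S)=\bigcap_{v\in S}\Lc_{\F''}(v)$ for $S\in\binom{W}{\sigma}$: after a double-count $\sum_S|\Lc^*(S)|=\sum_D\binom{d_{\F''}(D)}{\sigma}$ and a convexity estimate, many $S$ are ``good'' (i.e.\ $|\Lc^*(S)|=\Omega(n^{k-1})$), and for each good $S$ it embeds the $(k-1)$-tree $\HH'=\HH-R$ into the $(k-1)$-graph $\Lc^*(S)$ using Lemma~\ref{lem:embedding}. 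Any copy of $\HH'$ in $\Lc^*(S)$ lifts to a copy of $\HH$ in $\F''$ automatically, because $\Lc^*(S)$ consists precisely of $(k-1)$-sets that, joined to \emph{every} vertex of $S$, give an edge of $\F''$. This pre-restriction to the common link is exactly what makes the subsequent tree-embedding unconditional.

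Your proposed greedy, by contrast, commits to $\sigma$ fixed vertices $w_1,\dots,w_\sigma\in W$ chosen only to have large \emph{individual} degree, and then extends edge-by-edge inside $\F''$. At a step where the new cross-cut vertex $u_{c(j+1)}$ differs from the cross-cut vertex of the parent edge $F$, you must find many edges of $\F''$ containing $Q\cup\{\pi(u_{c(j+1)})\}$ where $Q=\varphi^j(F\cap E_{j+1})\subseteq[n]\setminus W$ is whatever your earlier choices produced. Large $d_{\F''}(w_i)$ for each $w_i$ gives no control over $d_{\F''}(Q\cup\{w_m\})$; nothing prevents this codegree from vanishing for the particular $Q$ you were forced into. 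Your proposed fix — invoking the decomposition $\F''\subseteq\F_1\cup\cdots\cup\F_m$ into Type~2 super-homogeneous pieces and the $s$-diversity/degree conditions of Theorem~\ref{homogeneous} — does not close this gap: those guarantees apply only to sets of the form $F_J$ for $J\in\MI(\F_j)$ within a single $\F_j$, whereas your $Q\cup\{w_m\}$ is an arbitrary set built across steps and there is no reason it is the $J$-projection of an edge for any admissible pattern $J$, nor that the relevant $w_m$ is the central vertex of the piece hosting the edges through $Q$. Moreover, fixing a single $\sigma$-subset of $W$ in advance is itself risky: the paper's double-count shows the \emph{aggregate} $\sum_S|\Lc^*(S)|$ is large, but a specific $\sigma$-tuple chosen only by individual degree can have an empty or tiny common link. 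The repair is essentially to adopt the paper's device: pass to $\Lc^*(S)$ for a good $S$ (found by double-counting, not by degree of singletons), apply Lemma~\ref{lem:mindegree} inside $\Lc^*(S)$, and embed $\HH-R$ there.
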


\begin{proof}
For convenience, let $\sigma=\sigma(\HH)$. Note that if $a \geq 8 s k!$, we may apply Lemma~\ref{lem:embedding}, so at the cost of a constant factor on $\beta$, we will assume $a = \sigma - 1 + \gamma$. 
We apply Theorem~\ref{centralization} to $\F$ with $\ve = \frac{\gamma}{4 \sigma}$ if $\sigma \geq 2$ and $\ve = \frac{1}{4} \gamma^{k - 2}$ if $\sigma = 1$.
Then one of the following two cases holds.

\medskip

\noindent {\bf Case 1.} $\F$ contains an $s$-superhomogeneous subgraph $\F'$ such that $|\F'|\geq \frac{1}{2} c(k,s)\ve |\F|$ and
$\MI(\F')$ is of type 1.

By Lemma~\ref{lem:caseone}, $\F'$ contains $\left( \frac{\ve c(k, s) }{2 ks}\right)^t n^{s - t}$ copies of $\T$. 
So the claim holds.

\bigskip

\noindent {\bf Case 2.} There exists a $W \subseteq [n]$ and $\F'' \subseteq \F$ satisfying the following: 

\begin{enumerate}
\item[(a)] $$|W| \leq \left( \frac{160 \sigma^2}{c(k,s) \gamma^{2k - 4}} \right)^{k - 1}.$$
\item[(b)] $$|\F''| \geq \left(1-\frac{\gamma}{4\sigma}\right)|\F|.$$
\item[(c)] Every $F \in \F''$ satisfies $|W \cap F| = 1$. 
\end{enumerate}

Observe that in $\sigma = 1$ case, Theorem~\ref{centralization} gives the upper bound $W \leq \left( \frac{160}{c(k, s) \gamma^{ 2k - 4} } \right)^{ k -1}$ and in $\sigma \geq 2$ case, gives the bound $W \leq \left( \frac{160 \sigma^2}{c(k, s) \gamma^{ 2} } \right)^{ k -1} $. In both cases, the written upper bound holds. For simplicity, observe that for both choices of $\ve$, $|\F''| \geq (1 - \ve)|\F| \geq  \left( 1 - \frac{\gamma}{4 \sigma}\right) |\F|$. 

For each $S\in \binom{W}{\sigma}$, 
let $\Lc^*(S) = \bigcap_{v \in S} \Lc_{\F''}(v)$.
We say a set $S \in \binom{W}{\sigma}$ is {\it good}, if $|\Lc^*(S)| \geq 8s (k - 1)! \binom{n}{k-2}$;
otherwise we say $S$ is {\it bad}. Let $\D$ denote the set of $(k-1)$-sets $D$ in $[n]\setminus W$
with $d_{\F''}(D)\geq \sigma$. Since each edge of $\F''$ contains one vertex in $W$ and a $(k-1)$-set in $[n]\setminus W$,
via double counting, we get
\begin{equation}\label{center-double-count}
\sum_{S \in \binom{W}{\sigma}} |\Lc^*(S)| = \sum_{D \in \D} \binom{d_{\F''}(D)}{\sigma}.
\end{equation}

By our assumption, $|\F''|\geq (1-\frac{\gamma}{4 \sigma})|\F|\geq (1-\frac{\gamma}{4\sigma})(\sigma-1+\gamma)\binom{n}{k-1}\geq
(\sigma-1+\frac{\gamma}{2})\binom{n}{k-1}$. This implies that $m:=\sum_{D\in \D} d_{\F''} (D)\geq \frac{\gamma}{2}\binom{n}{k-1}$.
Observe that for all $D \in \D$, $\binom{d_{\F''}(D)}{\sigma} \geq \frac{d_{\F''}(D)}{\sigma} $. 

Thus, $$\sum_{S \in \binom{W}{\sigma}} |\Lc^*(S)| \geq \frac{m}{\sigma} \geq \frac{\gamma}{2 \sigma }\binom{n}{ k - 1}.$$


On the other hand, the contribution to the left-hand side of \eqref{center-double-count} from the bad $\sigma$-sets of $W$
is at most $\binom{|W|}{\sigma} 8s(k-1)!\binom{n}{k-2}<\frac{\gamma}{4\sigma}\binom{n}{k-1}$, for sufficiently large $n$.
Hence, we have
\[\sum_{S\in \binom{W}{\sigma}, \, S \text{ good}} |\Lc^*(S)| \geq \frac{\gamma}{4\sigma} \binom{n}{ k - 1}.\]

Fix any $S\in \binom{|W|}{\sigma}$ that is  good, let $d(S) = \frac{|\Lc^*(S)|}{\binom{n}{ k - 2}}$, in particular, since $S$ is good, $d(S) \geq 8s (k - 1)!$. 
Let $R$ be a minimum cross-cut of $\HH$. Let $\HH'=\HH-R$. It is easy to see that $\HH'$ is a $(k-1)$-tree.
Observe that any copy of $\HH'$ in $\Lc^*(S)$ corresponds to a copy of $\HH$ in $\F$ by joining it to $S$
and different copies of $\HH'$ in $\Lc^*(S)$ give rise to different copies of $\HH$.
Let $t'=|\HH'|$ and note that $v(\HH')=s-\sigma$.  By Lemma~\ref{lem:embedding}, for every good $S$, there are at least $\eta d(S)^{t'}n^{s - \cross - t'}$ many copies of $\HH'$ in $\Lc^*(S)$. Thus, the number of copies of $\HH$ in $\F$ is at least: 

\begin{align*}
    \sum_{S\in \binom{W}{\sigma}, \, S \text{ good }} \eta d(S)^{t'} n^{s - \sigma - t'} &\geq  \eta n^{s - \cross - t' - (k - 2)t'} \sum_{S\in \binom{W}{\sigma},\, S \text{ good }} |\Lc^*(S))|^{t'} \\
    &\geq  \eta n^{s - \sigma - t' - (k - 2)t'} |W|^{\sigma} \left( \frac{1}{|W|^{\sigma}} \sum_{S  \text{ good }} |\Lc^*(S)|\right) ^{t'}\\
        &\geq  \eta n^{s - \sigma - t' - (k - 2)t'} |W|^{\sigma} \left( \frac{1}{|W|^{\sigma}} \frac{\gamma}{4 \sigma} \binom{n}{ k - 1}\right) ^{t'}\\
    &\geq \beta  a^t n^{s - \sigma}\geq \beta a^t n^{s-t},
\end{align*}
for some constant $\beta>0$, where we used the fact that $a=\sigma-1+\gamma$ is a constant 
and $|W|$ is bounded by a constant.
\end{proof}

Note that the bound obtained in Theorem \ref{supersat} is tight up to a constant factor, as shown by considering $\gnpk$, with $p=|\F|/\binom{n}{k}$. 

\section{Optimal Balanced Supersaturation at the Tur\'an Thresold} \label{sec:balanced-supersaturation}

In this section, we develop a balanced version of the supersaturation theorem of Section 4. In the next section, we will use this balanced supersaturation theorem to obtain almost tight bounds on the number of $\Hc$-free graphs. 


Let $d>0$ be a real. A $t$-uniform hypergraph $\K$ is called {\it $d$-graded} if 
for all sets $S \subseteq V(\K)$ with $1\leq |S|\leq t$, we have
$$d_{\K}(S) \leq d^{t - |S|}.$$

We say $S \subseteq V(\K)$ is {\it saturated} by $\K$ if $d_\K(S) = \floor{d^{t - |S|}}$. We say $S$ is {\it admissible} if no subset of it is saturated, and {\it inadmissible} otherwise. For each admissible $S\subseteq V(\K)$, let

$$\Z_{\K}(S) := \{ v \in V(\K) : \{v \} \cup S \text{ is inadmissible and } \{v\} \text{ is not saturated}\}.$$

\begin{lemma}\label{boundedhypergraph}
Let $d \geq 2$ be a real and $\K$ be a $d$-graded $t$-uniform hypergraph of size less than $d^{t - 1} M$. Then, the set of vertices $v \in V(\K)$ satisfying $d_\K(v) = \floor{d^{t - 1}}$ is less than $2 t M$. 
Furthermore, any admissible set $S$ of size $|S| \leq t - 1$, satisfies 

$$|\Z_\K(S)| \leq  t 2^{t + 1}  d.$$
\end{lemma}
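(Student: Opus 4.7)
The plan is to prove both parts by double counting, the first directly on $\K$ and the second by reducing to the first via link hypergraphs. The main subtlety will be arranging the reduction so that the link of an admissible set has size strictly less than the threshold needed to reapply the first assertion.

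For the first assertion, I would sum degrees: $\sum_{v \in V(\K)} d_\K(v) = t|\K| < t d^{t-1} M$. A vertex $v$ with $d_\K(v) = \floor{d^{t-1}}$ contributes at least $\floor{d^{t-1}} \geq d^{t-1}/2$ to this sum (valid whenever $d^{t-1} \geq 2$; the case $t = 1$ is trivial, as the ``saturated'' vertices are then contained in the fewer than $M$ edges of $\K$). If there were at least $2tM$ such vertices, the partial sum from them alone would be at least $tM d^{t-1} > t|\K|$, a contradiction.

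For the second assertion, fix an admissible $S$ with $|S|\le t-1$ and a vertex $v \in \Z_\K(S)$. Since $\{v\}\cup S$ is inadmissible, some subset of it is saturated; this subset cannot lie inside $S$ (as $S$ is admissible) and cannot equal $\{v\}$ (by definition of $\Z_\K(S)$), so it must take the form $\{v\}\cup T$ for some nonempty $T \subseteq S$. Setting
\[
A_T := \{ v \in V(\K) : \{v\} \cup T \text{ is saturated} \},
\]
we have $\Z_\K(S) \subseteq \bigcup_{\emptyset \ne T \subseteq S} A_T$, so it suffices to show $|A_T| < 2td$ for each such $T$.

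To bound $A_T$, the plan is to pass to the link $\Lc_\K(T)$, which is $(t-|T|)$-uniform and inherits the $d$-graded property from $\K$ since $d_{\Lc_\K(T)}(S') = d_\K(T \cup S')$ for every $S' \subseteq V(\K) \setminus T$. The key observation is that because $T$ is a nonempty subset of the admissible $S$, $T$ is itself admissible and hence not saturated, so $|\Lc_\K(T)| = d_\K(T) \leq \floor{d^{t-|T|}} - 1 < d^{(t-|T|)-1}\cdot d$. The vertices of $A_T$ are exactly those $v$ with $d_{\Lc_\K(T)}(v) = \floor{d^{(t-|T|)-1}}$, so applying the first assertion to $\Lc_\K(T)$ with parameters $t' = t-|T|$ and $M' = d$ gives $|A_T| < 2(t-|T|)d \leq 2td$. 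Summing over the at most $2^{|S|}-1 \leq 2^{t-1}$ nonempty $T \subseteq S$ then yields $|\Z_\K(S)| < t \cdot 2^t\, d \leq t \cdot 2^{t+1}\, d$, as required.
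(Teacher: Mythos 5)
Your proof is correct and relies on the same double-counting strategy as the paper: the first part is identical, and the second part uses the same decomposition $\Z_\K(S)\subseteq\bigcup_{\emptyset\neq T\subseteq S}A_T$. The only organizational difference is that, to bound each $A_T$, you pass to the link $\Lc_\K(T)$ and invoke the first assertion with $t'=t-|T|$ and $M'=d$, whereas the paper performs the analogous degree sum $\sum_{v\in\B(D)}d_\K(D\cup\{v\})\le t\,d_\K(D)$ directly inside $\K$; unwinding your link computation recovers exactly the paper's inequality, so this is a slightly more modular packaging of the same calculation rather than a genuinely different route.
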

\begin{proof}

Let $B = \{ v\in V(\K): d_{\K}(v) = \floor{d^{t - 1}}\}$. Then clearly,
\[|B| \floor{d^{t - 1}} \leq  t|\K|.\]
Since $|\K|\leq d^{t-1}M$, we have $|B| \leq 2 t M $. 

Now, consider any admissible set $S\subseteq V(\K)$ with $|S|\leq t - 1$.  
For every $D \subseteq S$, let $\B(D) := \{ v \in V(\K) : D \cup \{ v\} \text{ is saturated} \}$. By definition, $\Z_\K(S) = \bigcup_{D \subseteq S, D \neq \emptyset} \B(D)$. Consider a fixed nonempty $D\subseteq S$.
Note that 
\begin{equation}\label{sat-bounds}
\sum_{v \in \B(D)}d_\K( D\cup \{v \}) \leq t d_\K(D),
\end{equation}
since each edge of $\K$ containing $D$ is counted at most $t$ times in the sum on the left-hand side.
Since $S$ is admissible, $D$ is not saturated. Thus,  $d_\K(D) \leq d^{t - |D|}$. On the other hand, for every $v \in \B(D)$, $D\cup \{v\}$ is saturated. So, 
$d_\K(D \cup \{v\}) = \floor{d^{t - |D| - 1}}$. By \eqref{sat-bounds}, we get $|\B(D)|\floor{d^{t-|D|-1}}\leq t d^{t-|D|}$, implying
$$|\B(D)| \leq 2 t d.$$

Summing over all $D \subseteq S$ with $D \neq \emptyset$, we get $|\Z(S)|\leq t2^{t+1}d$.
\end{proof}

We say that a hypergraph $\Hc$ is {\it $\ell$-overlapping} if every two edges $E, F \in \Hc$ satisfy that $|E \cap F| \leq \ell$.   
The next lemma  says for any $2$-contractible, $\ell$-overlapping $k$-tree $\HH$, a sufficiently dense $k$-graph $\F$ always contains a dense balanced collection of copies of $\HH$. 

\begin{lemma}~\label{lem:heavybalanced}
Let $k\geq 4$ be an integer. Let $\Hc$ be a  $2$-contractible, $\ell$-overlapping $k$-tree with $t$ edges. There exists $a_0$ and $\beta$ such that the following holds if $n$ is sufficiently large. Let $\F$ be a $k$-graph on $[n]$ that has size $a\binom{n}{ k - 1}$, with $a \geq a_0$. Then, there exists a collection $\K$ of copies of $\Hc$ in $\F$ such that: 
\begin{enumerate}
\item $$|\K| \geq \beta an^{ k - 1} (\beta an^{k - \ell - 1})^{t - 1}$$
\item For all $S \subseteq \F$ with $1\leq |S|\leq t$, $$d_\K(S) \leq  (\beta an^{k - \ell - 1})^{t - |S|}.$$
\end{enumerate}
\end{lemma}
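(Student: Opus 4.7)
The plan is to combine the sharp supersaturation of Theorem~\ref{supersat} with a mild pre-cleaning of $\F$ and a greedy deletion procedure that exploits the $\ell$-overlapping structure of $\HH$.

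First, I pre-clean $\F$ to a subgraph $\F^* \subseteq \F$ with $|\F^*| \geq \tfrac{1}{2}|\F|$ in which every $j$-set $D$ with $1 \leq j \leq \ell$ that lies in some edge of $\F^*$ satisfies $d_{\F^*}(D) \leq C_0\, a n^{k-j-1}$, for a constant $C_0 = C_0(\HH)$. This follows a standard iterative removal in the style of Lemma~\ref{lem:mindegree}: at each step, delete all edges containing a small $j$-set whose codegree exceeds the threshold; the total loss is $O(|\F|/C_0)$ by a double count since there are at most $\binom{n}{j}$ such sets per level. The purpose of this step is that any partial copy of $\HH$ in $\F^*$ now extends by one more edge in at most $C_0\, a n^{k-\ell-1}$ ways, because the $\ell$-overlapping hypothesis forces any new edge to meet the already-placed portion in at most $\ell$ vertices.

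Next, I apply Theorem~\ref{supersat} to $\F^*$ with, say, $\gamma = 1$, to obtain an initial collection $\K_0$ of at least $\beta_0\, a^t n^{s-t}$ labeled copies of $\HH$ in $\F^*$, where $s = |V(\HH)|$ and $\beta_0 = \beta_0(\HH) > 0$. Since $\HH$ is an $\ell$-overlapping $k$-tree, counting along a tree-defining ordering yields $s - t \geq (k-1) + (t-1)(k-\ell-1)$, so $|\K_0|$ already matches the target on $|\K|$ up to constants. I regard $\K_0$ as a $t$-uniform hypergraph on $V(\K_0) = \F^*$ whose hyperedges are the edge-sets of copies of $\HH$. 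Set $d := \beta a n^{k-\ell-1}$ for a small constant $\beta > 0$ to be chosen. I now greedily prune $\K_0$ into $\K$: processing the non-empty subsets $S \subseteq V(\K_0)$ in decreasing order of $|S|$, from $t-1$ down to $1$, whenever $d_\K(S) > \floor{d^{t-|S|}}$ I delete arbitrary hyperedges containing $S$ until the bound is tight. The resulting $\K$ is $d$-graded by construction, so property~2 of the lemma holds.

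To bound $|\K|$ from below, I use the cleaned codegrees of $\F^*$ together with the $\ell$-overlapping property to show $d_{\K_0}(S) \leq C\, d^{t-|S|}$ for every non-empty $S \subseteq V(\K_0)$ of size at most $t-1$, where $C = C(\HH)$ is a constant. The total number of deletions is then at most $t \cdot C\, d^{t-1}\, |\F^*|$, which is at most $|\K_0|/2$ for $\beta$ sufficiently small, yielding the required lower bound on $|\K|$. The main obstacle is establishing this uniform codegree bound: an $i$-subset $S \subseteq \F^*$ can sit inside a copy of $\HH$ in several combinatorially distinct ways, so one must enumerate all injective mappings of $i$ specified edges of $\HH$ onto $S$, and then, along a tree-defining ordering of the remaining $t-i$ edges of $\HH$ rooted at the already-placed part, inductively bound each extension step by the factor $C_0\, a n^{k-\ell-1}$ guaranteed by the pre-cleaning.
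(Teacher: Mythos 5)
The proposal has a genuine gap: the claimed codegree bound on the initial collection $\K_0$ is false in general. You claim that, after pre-cleaning, one can show $d_{\K_0}(S) \leq C\, d^{t-|S|}$ with $d = \beta a n^{k-\ell-1}$ by extending along a tree-defining ordering, bounding each extension step ``by the factor $C_0\,a n^{k-\ell-1}$ guaranteed by the pre-cleaning.'' But the pre-cleaning only yields $d_{\F^*}(D) \lesssim a n^{k-|D|-1}$, and the extension at step $j$ is through a set of size $p_j = |E_j \cap V(\HH_{j-1})|$, which is $\leq \ell$ but may be strictly smaller. So the number of extensions at step $j$ is $\lesssim a n^{k-p_j-1}$, which \emph{exceeds} $a n^{k-\ell-1}$ whenever $p_j < \ell$. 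Concretely, for a three-edge $k$-tree with $|E_1\cap E_2|=1$ and $|E_2\cap E_3|=2$ (so $\ell=2$), the number of copies through a fixed edge is $\approx a^2 n^{2k-5}$ while $d^{t-1} = (\beta a n^{k-3})^2 \approx \beta^2 a^2 n^{2k-6}$, off by a factor of $n$. Since $\K_0$ does not satisfy the required gradedness even up to constants, the greedy ``delete arbitrary hyperedges'' pruning cannot be shown to preserve more than a vanishing fraction of $\K_0$, and the lower bound $|\K| \geq \beta a n^{k-1}(\beta a n^{k-\ell-1})^{t-1}$ is not established. (Your bound ``total deletions $\leq tCd^{t-1}|\F^*|$'' relies precisely on the false codegree bound.)

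The paper avoids this obstruction with a structurally different argument: rather than prune the full collection of copies, it takes $\K$ to be a \emph{maximal} $d$-graded collection and shows $\K$ is large by contradiction. If $|\K|$ were small, then by Lemma~\ref{boundedhypergraph} only few edges of $\F$ are saturated, and after discarding them and applying Lemma~\ref{lem:mindegree}, one can greedily embed an \emph{admissible} copy of $\HH$ edge by edge: at each step the number of valid extensions (containing the required at-most-$\ell$-set and avoiding the rest of the partial image) is $\gtrsim a n^{k-\ell-1}$, while the number of ``forbidden'' extensions $|\Z_\K(\cdot)| \lesssim \beta a n^{k-\ell-1}$ by Lemma~\ref{boundedhypergraph}, so a good extension exists for $\beta$ small. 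This contradicts maximality. Crucially, this argument never needs any codegree bound on the set of \emph{all} copies of $\HH$, which is exactly what makes the $\ell$-overlapping (rather than exactly-$\ell$-overlapping) hypothesis tractable.
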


\begin{proof}
Let $\beta$ be sufficiently small and $a_0$ be sufficiently large. 
Let $\K$ be maximal collection of copies of $\HH$ in $\F$ that satisfies condition 2. Since any collection
that consists of  a single copy of $\HH$ satisfies condition 2,
$\K$ exists. We show that $\K$ must also satisfy condition 1. 
Suppose on the contrary that 
$|\K| < \beta^t n^{ k - 1} (a n^{k - \ell})^{t - 1}$. To derive a contradiction, it suffices to show
that $\F$ contains a copy $\HH^*$ of $\HH$ that is admissible relative to $\K$. Indeed,
if we found such an  $\HH^*$, in particular, it would not be saturated by $\K$. Hence,
$d_{\K}(\HH^*) < (\beta a n^{k-\ell-1})^{t-t}=1$. So, $\HH^*\notin \K$. 
Furthermore, for all $S \subseteq \HH^*$, we would have  $d_{\K}(S) \leq  \floor{(\beta a n^{k - \ell})^{t - |S|}} - 1$. Thus,
$d_{\K\cup\HH^*}(S)\leq (\beta a n^{k - \ell})^{t - |S|}$.  
Therefore, $\K\cup \{\HH^*\}$ would still satisfy 
condition 2, which would contradict the maximality of $\K$ and complete the proof. Thus, it suffices to find such an admissible $\Hc^*$.

Since $\K$ satisfies condition 2,
$\K$ is $\beta a n^{ k- \ell - 1}$-graded.  Recall that a set $S \subseteq \F$ is saturated by $\K$ if $d_\K(S) =  \floor{(\beta an^{k - \ell})^{t - |S|}}$. Let $\F_{\mathrm{heavy}} := \{F \in \F: d_\K(F) = \floor{(\beta an^{k - \ell})^{t - 1}}\}$, 
so that $\F_{\mathrm{heavy}}$ consists of all the saturated edges of $\F$. By Lemma~\ref{boundedhypergraph}, $|\F_{\mathrm{heavy}}| \leq 2t \beta a n^{ k - 1}$.
Let $\F^* = \F \setminus \F_{\mathrm{heavy}}$. Then $|\F^*|\geq \frac{1}{2}|\F|$, as long as  $\beta < \frac{1}{8 t (k - 1)!}$.
By definition, every edge in $\F^*$ is admissible relative to $\K$.
By Lemma~\ref{lem:mindegree}, there exists $\F' \subseteq \F^*$ such that $|\F'| \geq \frac{1}{4} |\F|$ and 
for each $i\in [k-1]$, $\delta_i(\F')\geq \frac{a}{4 k } \frac{\binom{n}{k - 1}}{\binom{n}{ i}}$.  

For each $i\in [t]$, let $\HH_i=\{E_1,\dots, E_i\}$. We will inductively find  embeddings $\varphi^i$ of $\HH_i$ into $\F'$, such
that $\varphi^i(\HH_i)$ is admissible relative to $\K$.
First, let $F_1$ be any edge of $\F'$ and let $\varphi^1$ be any bijection from the vertices of $E_1$
to the vertices of $F_1$. Since $F_1$ is admissible relative to $\K$,
$\varphi^1(\HH_1)$ is admissible.
Let $1\leq i\leq t-1$ and suppose we have defined an embedding
$\varphi^i$ of $\HH_i$ into $\F'$ such that $\varphi^i(\HH_i)$ admissible relative to $\K$. 
Let $F$ denote a parent edge of $E_{i+1}$ in $\HH_i$.
Let $p=|F\cap E_{i+1}|$.
Since $\HH$ is $\ell$-overlapping, $p\leq \ell$. By our conditions on $\F'$, we have
$d_{\F'}(\varphi^i(F\cap E_{i + 1})) \geq \frac{a}{4k} \frac{\binom{n}{k - 1}}{\binom{n}{p}}$. The number of edges 
of $\F'$ that contain $\varphi^i(F\cap E_{i + 1})$ and some other vertex of $\varphi^i(\HH_i)$ is at most $kt \binom{n}{ k - p - 1}$. Since $\K$ is $\beta a n^{ k- \ell - 1}$-graded,
by Lemma~\ref{boundedhypergraph}, we have $\Z_\K(\varphi^i(\HH_i)),$ the set of edges $E$ of $\F'$ such that $\{E\} \cup \varphi^i(\HH_i)$ is inadmissible relative to $\K$, has size at most $t 2^{t + 1} \beta a n^{ k -\ell - 1}$. 

Therefore as long as  $\beta$ and $a$ satisfy the following inequality: 

$$\frac{a}{4 k} \frac{\binom{n}{k - 1}}{\binom{n}{p}} > kt \binom{n}{ k - p - 1} + 2^{ t + 1} t \beta a n^{ k - \ell - 1},$$

there is some edge $E$ of $\F'\setminus \Z_\K(\varphi^i(\HH_i))$ that contains $\varphi^i(F\cap E_{i+1})$ but does not intersect the rest of $\varphi^i(\HH_i)$.
Picking some $a_0$ sufficiently large in terms of $k, \ell, t$ and $\beta$ sufficiently small in terms of $k, \ell, t$, the inequality holds and such $E$ exists. We extend $\varphi^i$ to an embedding $\varphi^{i+1}$ of $\HH_{i+1}$ into $\F'$ by mapping the vertices of
$E_{i+1}\setminus (F\cap E_{i+1})$ injectively to the vertices of $E\setminus \varphi^i (F\cap E_{i+1})$ so that
$\varphi^{i+1}(E_{i+1})=E$. Note that $\varphi^{i+1}(\HH_{i+1})=\varphi^i(\HH_i)\cup \{E\}$. Since $E \not \in \Z_\K(\varphi^i(\HH_i))$ and $\varphi^{i + 1}(\HH_{i+1})$ is admissible relative to $\K$. This completes the induction. Now, $\varphi^t(\HH_t)$ is
a copy of $\HH$ that is admissible relative to $\K$, completing our proof.
\end{proof}

Next, we seek to prove a similar result for $|\F| = a \binom{n}{k - 1}$ with $\sigma(\HH) - 1 + \gamma \leq a \leq a_0$,
where $\gamma$ is any given small positive real. At the cost of a constant, we will prove a balanced supersaturation lemma only for $a = \sigma(\HH) - 1  + \gamma$. 

\begin{lemma}\label{lem:balancedlight}
Let $k\geq 4$ be an integer.
Let $\HH$ be a $2$-contractible $\ell$-overlapping $k$-tree with $t$ edges. Let $\gamma > 0$ be a positive real. There exists a $\beta > 0$ such that the following holds if $n$ is sufficiently large. Let $\F$ be a $k$-graph on $[n]$ with  $|\F|\geq (\sigma(\HH) -1 + \gamma) \binom{n}{ k - 1}$. Then, there exists a collection $\K$ of copies of $\T$ in $\F$ such that: 
\begin{enumerate}
 \item $$|\K| \geq \beta n^{ k - 1} (\beta n^{k - \ell  - 1})^{t - 1}$$
\item For all $S \subseteq \F$ with $1\leq |S|\leq t$, 
\[d_\K(S) \leq  (\beta n^{k - \ell - 1})^{t - |S|}.\]
\end{enumerate}
\end{lemma}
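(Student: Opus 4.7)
The strategy is to mirror Lemma~\ref{lem:heavybalanced}, adapted to the Tur\'an-threshold density regime. Fix a small constant $\beta>0$ (to be chosen later) and let $\K$ be a maximal collection of copies of $\HH$ in $\F$ satisfying condition~2 with parameter $d = \beta n^{k-\ell-1}$; assuming condition~1 fails, it suffices to produce a single admissible copy of $\HH$ in $\F\setminus \K$, contradicting maximality. Exactly as in Lemma~\ref{lem:heavybalanced}, Lemma~\ref{boundedhypergraph} yields $|\F_{\mathrm{heavy}}|\leq 2t\beta n^{k-1}$, so that $\F^* := \F\setminus\F_{\mathrm{heavy}}$ retains $|\F^*|\geq (\sigma(\HH) - 1 + \gamma/2)\binom{n}{k-1}$ and every edge of $\F^*$ is admissible as a $1$-set. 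Since $|\F^*|$ sits only slightly above the Tur\'an threshold, Lemma~\ref{lem:mindegree} no longer suffices on its own; instead I will apply Theorem~\ref{centralization} to $\F^*$ with $\ve = \gamma/(8\sigma)$ and split into the two resulting cases.

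In the super-homogeneous case, $\F^*$ contains an $s$-super-homogeneous subgraph $\F_1'$ of size $\gtrsim \gamma n^{k-1}$ with $\MI(\F_1')$ of type~1; without loss of generality $2^{[k-2]}\subseteq \MI(\F_1')$. I will replicate the iterative embedding of Lemma~\ref{lem:caseone}, but at each step of extending a partial embedding through a parent edge I will additionally forbid the inadmissible extensions. The $s$-super-homogeneity contributes $\Omega(n^{k-\ell-1})$ candidates, while intersecting extensions number $O(n^{k-\ell-2})$ and inadmissible extensions at most $O(\beta n^{k-\ell-1})$ by Lemma~\ref{boundedhypergraph}; for $\beta$ sufficiently small, admissible extensions persist at every step, producing the sought admissible copy of $\HH$.

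In the centralization case, a constant-sized $W\subseteq [n]$ and $\F''\subseteq \F^*$ with $|\F''|\geq (1-\ve)|\F^*|$ and $|F\cap W|=1$ for every $F\in\F''$ are supplied. Following the double count~\eqref{center-double-count} as in Theorem~\ref{supersat}, pigeonholing over the good $\sigma$-sets will yield $S^*\in \binom{W}{\sigma}$ with $|\Lc^*(S^*)|=\Omega_\gamma(n^{k-1})$, so that $a_{S^*}:=|\Lc^*(S^*)|/\binom{n}{k-2}=\Omega(n)$. I will then fix a minimum cross-cut $R$ of $\HH$ avoiding all designated expansion vertices (so $\HH' := \HH - R$ is a $2$-contractible, $\ell$-overlapping $(k-1)$-tree) together with a bijection $\pi:R\to S^*$ through which every embedding of $\HH'$ into $\Lc^*(S^*)$ lifts to an embedding of $\HH$ into $\F$. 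The iterative argument of Lemma~\ref{lem:heavybalanced} then runs inside $\Lc^*(S^*)$: Lemma~\ref{lem:mindegree} first supplies a sub-$(k-1)$-graph of $\Lc^*(S^*)$ of density $\Omega(n)$ with good minimum degrees, and $\HH'$ is embedded edge by edge, at each step avoiding intersections with the partial image ($O(n^{k-\ell-2})$ candidates) and candidates whose lift to $\F$ would make the partial $\HH$-image inadmissible in $\K$. The latter is $O(\beta n^{k-\ell-1})$ via Lemma~\ref{boundedhypergraph} applied to $\K$ with the partial $\HH$-image as input, while valid extensions number $\Omega(a_{S^*}n^{k-\ell-2})=\Omega(n^{k-\ell-1})$, which dominates for $\beta$ sufficiently small.

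The chief technical hurdle lies in the centralization case: one must verify that a minimum cross-cut $R$ of $\HH$ avoiding all expansion vertices always exists (so $\HH - R$ remains $2$-contractible and $\ell$-overlapping at uniformity $k-1$), and then check that the $\F$-level inadmissibility obstruction on partial $\HH$-images pulls back cleanly through the lift $\pi$ to an $O(\beta n^{k-\ell-1})$-bounded set of forbidden $(k-1)$-edges in $\Lc^*(S^*)$, with a universal constant $\beta$ absorbing all the multiplicative constants picked up along the way.
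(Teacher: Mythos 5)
Your proposal tracks the paper's proof very closely: the same maximality argument reducing to the existence of a single admissible copy, the same removal of saturated edges via Lemma~\ref{boundedhypergraph}, the same application of Theorem~\ref{centralization} to the residual graph, and the same two-case analysis (super-homogeneous type~1 versus centralization), with the same extension-by-extension embedding that at each step excludes the $O(\beta n^{k-\ell-1})$ inadmissible candidates.

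The one deviation is in the centralization case, where you insist on choosing a minimum cross-cut $R$ that \emph{avoids all designated expansion vertices}, so that $\HH' := \HH - R$ is $2$-contractible and then flag the existence of such an $R$ as the chief technical hurdle. This requirement is unnecessary and should simply be dropped. The Case~2 embedding of $\HH'$ into the link $\Lc^*(S)$ (first apply Lemma~\ref{lem:mindegree} to get a subfamily with good proper minimum degrees, then extend edge by edge) never uses any degree-$1$ vertices of $\HH'$; $2$-contractibility plays no role there. All that is needed is: (i) $\HH'$ is a $(k-1)$-tree, which holds for \emph{any} cross-cut $R$ since each $E_i$ loses exactly one vertex $v_i = E_i \cap R$ and one checks directly that the tree-defining ordering descends; (ii) $\HH'$ is $\ell$-overlapping, which is automatic because deleting vertices only shrinks pairwise intersections; and (iii) the contracted edges $E'_1,\dots,E'_t$ are distinct, which follows because each $E_i$ contains two degree-$1$ vertices and $R$ removes only one vertex from it. Expansion vertices matter only in Case~1, where they must be sent into $X_{k-1}\cup X_k$. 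Once you drop the extra constraint on $R$, your ``chief technical hurdle'' disappears, and the rest of your outline — the double count in~\eqref{center-double-count}, the pigeonholed $S^* \in \binom{W}{\sigma}$ with $|\Lc^*(S^*)| = \Omega_\gamma(n^{k-1})$, the lift of the $\HH'$-embedding to an $\HH$-embedding via a fixed bijection $R\to S^*$, and the bookkeeping of intersecting versus inadmissible extensions — matches the paper and is correct.
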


\begin{proof}
For convenience, let $\sigma=\sigma(\HH)$.
Let $\beta$ be sufficiently small and $a_0$ be sufficiently large. 
Let $\K$ be maximal collection of copies of $\HH$ in $\F$ that satisfies condition 2. Since any collection consisting of
a single copy of $\HH$ satisfies condition 2,
$\K$ exists. We show that $\K$ must also satisfy condition 1. 
Suppose on the contrary that 
$|\K| < \beta^t n^{ k - 1} (n^{k - \ell})^{t - 1}$. To derive a contradiction, as in the proof Lemma~\ref{lem:heavybalanced}, it suffices to show
that $\F$ contains a copy $\HH^*$ of $\HH$ that is admissible relative to $\K$. 

By definition, $\K$ is $\beta  n^{ k- \ell - 1}$-graded.  Recall that a set $S \subseteq \F$ is saturated with respect to $\K$ if $\deg_\K(S) =  \floor{(\beta n^{k - \ell})^{t - |S|}}$. Let $\F_{\mathrm{heavy}} := \{F \in \F: \deg_\K(F) = \floor{(\beta n^{k - \ell})^{t - 1}}\}$. Note that $\F_{\mathrm{heavy}}$ consists of all the saturated edges. By Lemma~\ref{boundedhypergraph}, $|\F_{\mathrm{heavy}}| \leq 2t \beta  n^{ k - 1}<\frac{\gamma}{2} \binom{n}{k-1}$, if $\beta$ is sufficiently small.
Let $\widehat{\F} := \F - \F_{\mathrm{heavy}}$.  Then $|\widehat{\F}| \geq (\cross -1 + \frac{\gamma}{2}) \binom{n}{ k - 1}.$ 
By definition, all edges of $\widehat{\F}$ are admissible relative to $\K$.
Let $s = v(\Hc)$. Now we apply Theorem~\ref{centralization} to $\widehat{\F}$ with 
\[a = (\cross - 1 + \frac{\gamma}{2}) \mbox{ and  }\ve = \frac{1}{\sigma} \left(\frac{\gamma}{2}\right)^{ k - 2} . \] 

There are two cases to consider:

\medskip

\noindent {\bf Case 1.} $\widehat{\F}$ satisfies case (1) of Theorem~\ref{centralization}.

\medskip

In this case, we find an $\F' \subseteq \widehat{\F}$ which is $s$-super-homogeneous, $|\F'|\geq \frac{c(k, s) \gamma^{ k -2}}{2^{k - 1} \sigma} \binom{n}{ k - 1}$, and $\MI(\F')$ is of type 1, where without loss of generality $2^{[ k - 2]} \subseteq \MI(\F')$.
Let $(X_1,\dots, X_k)$ be the $k$-partition associated with $\MI(\F')$. Since $\T$ is $2$-contractible, each edge contains at least two vertices of degree $1$.
We will designate two of them as {\it expansion vertices} for $E$.

Let $E_1,\cdots, E_t$ be a tree-defining ordering of the edges of $\HH$. For each $i\in [t]$, let $\T_i=\{E_1,\dots, E_i\}$.
Let $\Pi_i$ denote the set of embeddings $\varphi^i$ of $\HH_i$ into  $\F'$ satisfying that 
$\varphi^i(\HH_i)$ is admissible relative to $\K$ and $\varphi^i$ maps all expansion vertices in $\HH_i$ to $X_{k-1}\cup X_k$.
We use induction on $i$ to prove that for each $i\in [t]$, $\Pi_i\neq \emptyset$.

For the base step, we let $F_1$ be any edge of $\F'$. Let $\varphi^1$ be a bijection that maps the vertices of $E_1$ to the vertices
of $F_1$ such that the two expansion vertices are mapped to $F_1\cap X_{k-1}$ and $F_1\cap X_k$, respectively.
Since all edges of $\F'$ are admissible, $\varphi^1(\HH_1)$ is admissible. 
Hence, $\Pi_1\neq \emptyset$.

For the induction step, let $1\leq i\leq t-1$ and suppose that $\Pi_i\neq \emptyset$.
Let $\varphi^i\in \Pi_i$. 
Let $F$ be a parent edge of $E_{i+1}$ in $\HH_i$. Since vertices in $F\cap E_{i+1}$ have degree at least two in $\HH$, 
they are not expansion vertices in $\HH_i$.
Hence, $\varphi^i(F\cap E_{i+1})\subseteq \bigcup_{j=1}^{k-2} X_j$. Since $2^{[k-2]}\subseteq \MI(\F)$,
we have $\pi (\varphi^i(F \cap E_{i+1})) \in \MI(\F)$. So, in particular that $\Lc_\F(\varphi^i(F \cap E_{i+1}))$ is $s$-diverse. Therefore, we have that at least $1 - \frac{v(H_i)}{s} \geq \frac{1}{s}$ proportion of the edges in $\Lc_\F( \varphi^i(F \cap E_i))$  do not 
intersect the rest of $\varphi^i(\HH_i)$. Furthermore, by Lemma~\ref{boundedhypergraph}, with $d=\beta n^{k-\ell-1}$,  $|\Z_\K(\varphi^i(\HH_i))| \leq t 2^{ t + 1} \beta n^{ k - \ell - 1}$. Let $p=|F\cap E_{i+1}|$.
Since $\HH$ is $\ell$-overlapping, $p\leq \ell$. Since $\F'$ is $s$-super-homogoneus and 
$\pi(F\cap E_{i+1})\in \MI(\F')$,
\[|\Lc_{\F'}(\varphi^i(F \cap E_{i+1}))|\geq \frac{|\F'|}{2kn^p} \geq \frac{c(k,s) \gamma^{k - 2} }{2^{k} \sigma k!} n^{ k - p - 1 }
\geq \frac{c(k,s) \gamma^{k - 2}}{2^{k} \sigma k!} n^{ k - \ell - 1 }.\]
By picking $\beta$ sufficiently small in terms of $\gamma$ and $\T$, we can ensure that 
\[\frac{1}{s} | \Lc_\F'( \varphi^i(F \cap E_{i+1}))|>  |\Z_\K(\varphi^i(\HH_i))|.\]
Hence, $\F'\setminus\Z(\varphi^i(\HH_i))$  contains an edge $E$ that contains $\varphi^i(F\cap E_{i+1})$ but
does not intersect the rest of $\varphi^i(\HH_i)$.
We extend $\varphi^i$ to an embedding of $\HH_{i+1}$ in $\F'$ by mapping the vertices of $E_{i+1}\setminus (F\cap E_{i+1})$
injectively into $E\setminus \varphi^i(F\cap E_{i+1})$, so that
$\varphi^{i+1}(E_{i+1})=E$ and the expansion vertices  of $E$ are mapped into $X_{k - 1}$ and $X_k$. Since $E \not \in \Z_\K(\varphi^i(\HH_i))$,  $\varphi^{i + 1}(\HH_{i + 1})$ is admissible
relative to $\K$. This completes the induction. Now, $\varphi^t(\HH_t)$ is a copy of $\HH$ in $\F'$ that is
admissible relative to $\K$. This completes the proof for Case 1.

\bigskip
\noindent{\bf Case 2.} $\widehat{\F}$ satisfies case (2) of Theorem \ref{centralization}.

\medskip

In this case, there exist a $W \subseteq [n]$ and $\F'' \subseteq \widehat{\F}$ satisfying 

\begin{enumerate}
\item[\qquad (a)] $|W| \leq \left( \frac{5\cdot 2^{2k - 2} \sigma^2}{c(k,s) \gamma^{2k - 4}} \right)^{k - 1}.$
\item[\qquad(b)] $|\F''| \geq (1-\frac{\gamma^{k - 2}}{ 2^{ k - 2} \cdot \sigma} )|\widehat{\F}|\geq (1-\frac{\gamma}{4\sigma})(\sigma-1+\frac{\gamma}{2})
\binom{n}{k-1} \geq 
(\cross - 1 + \frac{\gamma}{8} ) \binom{n}{k - 1} .$
\item[\qquad(c)] Every $F \in \F''$ satisfies $|W \cap F| = 1$. 
\end{enumerate}

For each $S\in \binom{W}{\sigma}$, 
let $\Lc^*(S) = \bigcap_{v \in S} \Lc_{\F''}(v)$.
Let $\D$ denote the set of $(k-1)$-sets $D$ in $[n]\setminus W$
with $d_{\F''}(D)\geq \sigma$. Since each edge of $\F''$ contains one vertex in $W$ and a $(k-1)$-set in $[n]\setminus W$,
via double counting, we get

\[\sum_{S \in \binom{W}{\sigma}} |\Lc^*(S)| = \sum_{D \in \D} \binom{d_{\F''}(D)}{\sigma}.\]

By our assumption, $|\F''|\geq (\sigma-1+\frac{\gamma}{8})\binom{n}{k-1}$. This implies that $m:=\sum_{D\in \D} d_{\F''} (D)\geq \frac{\gamma}{8}\binom{n}{k-1}$.
Thus, by observing that if $d_{\F''}(D) \geq \sigma$, $\binom{d_{\F''}(D)}{\sigma} \geq \frac{d_{\F''}(D)}{\sigma}$, the right-hand side is at least 
$\frac{\gamma}{8 \sigma} \binom{n}{k -1}$. 



Hence, there exists a  $S \in \binom{W}{\sigma}$, such that 
\begin{equation} \label{Ls-size}
|\Lc^*(S)| \geq\frac{\gamma}{8 \sigma |W|^\sigma} \binom{n}{ k - 1}.
\end{equation}

 We fix such an $S$. By Lemma \ref{lem:mindegree}, $\Lc^*(S)$ contains a subgraph $\Lc'$
 such that $|\Lc'|\geq \frac{1}{2}|\Lc^*(S)|$ and  for each $i\in [k-1]$, 
 $\delta_i(\Lc')\geq \frac{1}{2k}\frac{|\Lc^*(S)|}{\binom{n}{i}}$. Note that since $\Lc'\subseteq \Lc^*(S)$,
 for each $D\in \Lc'$ and each $v\in S$, $D\cup \{v\}\in \F''$.

 Let $E_1,\dots, E_t$ be a tree-defining order of $\HH$.
 For each $i\in [t]$, let $\HH_i=\{E_1,\dots, E_i\}$.  Let $R$ be a minimum cross-cut of $\HH$. 
 For each $i\in [t]$, let $\{v_i\}=E_i\cap R$ and $E'_i=E_i\setminus \{v_i\}$.
 Let $\HH' = \HH - R$. It is easy to see by definition that $\HH'$ is a $(k-1)$-tree for which $E'_1,\dots, E'_t$
 is a tree-defining ordering of $\HH'$. Furthermore, since each $E_i$ contains two degree $1$ vertices, $E'_1,\dots, E'_t$
 are all distinct.

For each $i\in [t]$, let $\Pi_i$ denote the collection of embeddings of $\varphi^i$ of $\HH_i$ into $\F'$ such that 
$\varphi(\HH_i)$ is admissible relative to $\K$. We prove by induction on $i$ that $\Pi_i\neq \emptyset$.
Let $g$ be any bijection from $R$ to $S$. For the base step, let $D_1$ be any edge of $\Lc'$. We define $\varphi^1$
to be any mapping from $V(\HH_1)$ to $V(\F'')$ that maps $v_1$ to $g(v_1)$ and $E'_1$ to $D_1$.
By earlier discussion, $\varphi^1(E_1)= D_1\cup \{v_1\}$ is an edge in $\F''$.
Since all edges in $\widehat{\F}$ are admissible, all edges in $\F''$ are admissible. Hence, $\varphi^1(\HH_1)=
\varphi^1(E_1)$ is admissible relative to $\K$. For the induction step, let $1\leq i\leq t-1$ and suppose $\Pi_i\neq \emptyset$.
Let $\varphi^i\in \Pi_i$. Suppose a parent edge of $E'_{i+1}$ in $\HH'_i$ is $E'_j$, for some $j\leq i$.
Let $p=|E_j\cap E'_{i+1}|$.
Since $\HH$ is $\ell$-overlapping, $\HH'$ is $\ell$-overlapping. So $p\leq \ell$. By \eqref{Ls-size} and subsequent discussion,
$d_{\Lc'}(\varphi^i(E'_j\cap E'_{i+1}))\geq \frac{\gamma}{16\sigma |W|^\sigma}\frac{\binom{n}{k-1}}{\binom{n}{p}}$.
The number of edges of $\Lc'$  containing $\varphi^i(E'_j\cap E'_{i + 1})$ and intersect the rest of $\varphi^i(\HH_i)$
is at most $kt \binom{n}{ k - p - 2}$. 
Furthermore, 
by Lemma~\ref{boundedhypergraph}, with $d=\beta n^{k-\ell-1}$, $|\Z_\K(\varphi^i(\HH_i))|\leq 2^{ t + 1} t \beta  n^{ k - \ell -1}$. 
Let $\Z'=\{E\setminus R: E\in \Z_\K(\varphi^i(\HH_i))\}$. Then $|\Z'|\leq 2^{t+1}t\beta n^{k-\ell-1}$.
By choosing $\beta$ small enough, for sufficiently large $n$, we have

\[\frac{\gamma}{16 k \sigma |W|^\sigma }  \frac{\binom{n}{k - 1}}{\binom{n}{p}} > kt \binom{n}{ k - p  - 2} + 2^{ t + 1} t \beta  n^{ k - \ell -1},\]

where we used the fact that $|W|$ is bounded by a constant.

Hence, $\Lc'\setminus Z'$  contains an edge $D$ that contains $\varphi^i(E_j'\cap E_{i+1}')$ but
does not intersect the rest of $\varphi^i(\HH_i)$. 
We extend $\varphi^i$ to an embedding of $\HH_{i+1}$ in $\F'$ by mapping the vertices of $E'_{i+1}\setminus (E'_j\cap E'_{i+1})$
injectively into $D\setminus \varphi^i(E'_j\cap E'_{i+1})$ and mapping $v_{i+1}$ to $g(v_{i + 1})$ so that
$\varphi^{i+1}(E_{i+1})=D \cup \{ g(v_{i + 1})\}$. Since $D \not \in \Z'$, 
$\varphi^{i+1}(E_{i+1})\notin \Z_\K(\varphi^i(\HH_i))$, and hence $\varphi^{i + 1}(\HH_{i + 1})$ is admissible
relative to $\K$. This completes the induction. Now,  $\varphi^t(\HH_t)$ is a copy of $\HH$ in $\F''$ that is admissible
relative to $\K$. This completes the proof for Case 2.
\end{proof}

Lemma \ref{lem:heavybalanced} and Lemma \ref{lem:balancedlight}  together imply the following: 

\begin{theorem}\label{balancedsupersaturation}
Let $k\geq 4$.
Let $\ve > 0$. Let $\HH$ be a $2$-contractible $\ell$-overlapping $k$-tree with $t$ edges. There exist $c=c(\ve, \HH), c'=c'(\ve,\HH)>0$ such that the following holds. Let $\F$ be a $k$-graph on $[n]$ with $|\F|=a\binom{n}{k-1}$, where
$a\geq \sigma(\HH) - 1 + \ve$. Then, there exists a collection $\K$ of copies of $\T$ in $\F$ such that 
$|\K| \geq c' an^{k-1}(an^{k-\ell-1})^{t-1}$ and for all $1 \leq b \leq t$: 
$$\Delta_{b}(\K) \leq c \cdot [\tau(\F)]^{b - 1}\frac{|\K|}{|\F|},$$

where $\tau(\F) := \frac{1}{ a n^{k - \ell - 1}}$. \qed
\end{theorem}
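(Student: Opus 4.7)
The plan is to derive Theorem \ref{balancedsupersaturation} by combining Lemma \ref{lem:heavybalanced} and Lemma \ref{lem:balancedlight}, treating the ``heavy'' range $a \geq a_0$ and the ``light'' range $\sigma(\HH) - 1 + \ve \leq a < a_0$ separately, then unifying the two conclusions. Let $a_0$ and the corresponding $\beta_1 > 0$ come from Lemma \ref{lem:heavybalanced}, and let $\beta_2 > 0$ (depending on $\ve$ and $\HH$) come from Lemma \ref{lem:balancedlight} applied with the same $\HH$ and $\gamma := \ve$.

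In the heavy case, I would apply Lemma \ref{lem:heavybalanced} directly to $\F$, obtaining a collection $\K$ of copies of $\HH$ with $|\K| \geq \beta_1 a n^{k-1}(\beta_1 a n^{k-\ell-1})^{t-1}$ and $d_\K(S) \leq (\beta_1 a n^{k-\ell-1})^{t-|S|}$ for all $S$ with $1 \leq |S| \leq t$. In particular, $\Delta_b(\K) \leq (\beta_1 a n^{k-\ell-1})^{t-b}$ for each $1 \leq b \leq t$. Using $\tau(\F) = 1/(an^{k-\ell-1})$ and $|\F| = a\binom{n}{k-1}$, one computes
\[
[\tau(\F)]^{b-1}\frac{|\K|}{|\F|} \;\geq\; \frac{\beta_1^t a n^{k-1}(a n^{k-\ell-1})^{t-1}}{(a n^{k-\ell-1})^{b-1}\,a\binom{n}{k-1}} \;\gtrsim\; \beta_1^t (an^{k-\ell-1})^{t-b},
\]
so the desired bound on $\Delta_b(\K)$ holds with $c$ any constant exceeding $\beta_1^{-t}$ (times the implicit constant from $\binom{n}{k-1}$), and the lower bound on $|\K|$ holds with $c' = \beta_1^t$.

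In the light case, I would invoke Lemma \ref{lem:balancedlight} with $\gamma = \ve$, obtaining a collection $\K$ with $|\K| \geq \beta_2 n^{k-1}(\beta_2 n^{k-\ell-1})^{t-1}$ and $d_\K(S) \leq (\beta_2 n^{k-\ell-1})^{t-|S|}$. Since $a$ lies in the bounded interval $[\sigma(\HH)-1+\ve,\; a_0]$, all powers of $a$ appearing on either side of the desired inequalities are bounded above and below by absolute constants depending only on $\ve, \HH$; the same computation as in the heavy case then verifies both conclusions, after enlarging $c$ and shrinking $c'$ to absorb these constant factors.

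Finally, taking $c := c(\ve, \HH)$ and $c' := c'(\ve, \HH)$ to be the worse of the two constants obtained in each case yields the unified statement. There is no real obstacle here: the bulk of the work was already carried out in the two preceding lemmas, and the present theorem is essentially a bookkeeping step that rewrites the degree bounds $d_\K(S) \leq (\beta a n^{k-\ell-1})^{t-|S|}$ in the form $c[\tau(\F)]^{b-1}|\K|/|\F|$ required by the container-method framework of the next section. The only mild subtlety is keeping track of the factors of $a$ in the heavy versus light regime, which is handled by the case split at threshold $a_0$.
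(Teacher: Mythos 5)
Your proposal is correct and matches the paper exactly: the paper simply asserts that Lemmas \ref{lem:heavybalanced} and \ref{lem:balancedlight} together imply Theorem \ref{balancedsupersaturation} (marking it \qed with no written proof), and the case split at $a_0$ plus the bookkeeping you carry out to rewrite $d_\K(S)\leq(\beta a n^{k-\ell-1})^{t-|S|}$ in the form $c[\tau(\F)]^{b-1}|\K|/|\F|$ is precisely the intended argument. Your tracking of the $a$-factors in the light regime (bounded $a$ absorbed into the constants) is also the right way to reconcile the missing $a$ in the conclusion of Lemma \ref{lem:balancedlight} with the $a$-dependent statement of the theorem.
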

Observe that the size of the collection $\K$ returned by Theorem \ref{balancedsupersaturation} 
matches the bound in Theorem \ref{supersat}  if the intersection of each edge of $\HH$ and its parent edge has size exactly $\ell$.
Thus, it can be viewed as a direct strengthening of Theorem \ref{supersat} for such $2$-contractible hypertrees.

\section{Number of $\HH$-free hypergraphs} \label{sec:main-proofs}
In this section, we prove Theorem \ref{main} and Theorem \ref{random-tree}. First, we 
recall the celebrated Container Lemma as follows.

\begin{lemma}[\cite{BMS, ST}]\label{lem:containerlemma}
For every $t \in \NN$ and every $c > 0$, there exists a $\delta > 0$ such that the following holds for all $N \in \NN$. Let $\K$ be a $t$-uniform hypergraph on $N$ vertices such that for all $1 \leq b \leq t$, $$\Delta_b(\K) \leq c \tau^{b - 1} \frac{|\K|}{N}.$$ Then there exists a collection $\C$ of subsets of $V(\K)$ such that the following holds:
\begin{enumerate}
\item For every $I \in \I(\K)$, there is a $C \in \C$ such that $I \subseteq C$. 
\item $|\C| \leq \binom{N}{\leq t \tau N}$. 
\item For every $C \in \C$, $|V(C)| \leq (1 - \delta)|V(\K)|$. 
\end{enumerate}
\end{lemma}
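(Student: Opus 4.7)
The plan is to prove the Container Lemma via the greedy \emph{scythe algorithm} of Saxton--Thomason. The goal is to associate with each independent set $I \in \I(\K)$ a small fingerprint $S(I) \subseteq I$ of size at most $t\tau N$, together with a deterministic function that reads off from $S(I)$ a container $C(S(I)) \supseteq I$ with $|C(S(I))| \leq (1-\delta)N$. Since $S(I)$ completely determines $C$ and there are at most $\binom{N}{\leq t\tau N}$ possible fingerprints, this yields the bound on $|\C|$.

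To build the algorithm, I would fix a linear ordering of $V(\K)$ (typically by decreasing $\K$-degree) and iteratively maintain a current fingerprint $S$ (initially empty) and a current ``available'' set $U \subseteq V(\K)$ (initially $V(\K)$). At each step, locate the vertex $v \in U$ of largest link-degree in $\Lc_\K(S)$ restricted to $U$, test whether $v \in I$, and branch. If $v \in I$, add $v$ to $S$ and then remove from $U$ every vertex $w$ whose link with $S \cup \{v\}$ in $\K$ is ``too dense''; if $v \notin I$, simply remove $v$ from $U$. Continue until the link-degrees of all vertices of $U$ fall below a threshold tied to $\tau$, at which point set $C(S) := U$.

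The degree hypothesis $\Delta_b(\K) \leq c\tau^{b-1}|\K|/N$ for all $1 \leq b \leq t$ drives the whole argument: it controls the rate at which the link hypergraph shrinks after each addition to $S$, guaranteeing both that at most $t\tau N$ vertices are ever added to $S$ and that each ``dangerous'' removal can be charged against an overall budget of $\delta N$ discarded vertices. The containment $I \subseteq C(S)$ follows because a vertex of $I$ is never discarded from $U$ by the ``too dense link'' criterion---if it were, it would span many edges of $\K$ with $S \subseteq I$, contradicting independence.

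The main obstacle is the multi-scale potential argument balancing three requirements: (i) each fingerprint addition costs at most $\tau$ in a suitably normalized measure, (ii) the total number of discarded vertices accumulates to at least $\delta N$, and (iii) the algorithm terminates within the allotted budget. The standard approach tracks, for each $1 \leq b \leq t$, the $b$-shadow weight of $\K$ restricted to $U$, showing that each fingerprint addition cuts these weights by a geometric factor depending on $c$ and $\tau$, while each ``empty'' step chips away at the $0$-shadow (i.e., $|U|$) itself. Choosing $\delta$ small enough in terms of $c$ and $t$ balances these competing demands. This is the technical heart of the Balogh--Morris--Samotij and Saxton--Thomason theorems, which we will ultimately invoke as a black box in what follows.
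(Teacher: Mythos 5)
The paper does not prove this lemma at all; it is quoted directly as a known black-box result with the citation \cite{BMS, ST}. Your sketch of the Saxton--Thomason scythe algorithm is an accurate high-level description of how those papers prove it, and since you ultimately invoke the result as a black box, your treatment matches the paper's in substance.
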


Applying Lemma \ref{lem:containerlemma} with Theorem~\ref{balancedsupersaturation}, we have the following: 
\begin{lemma}\label{container}
Let $k\geq 4$ be an integer.
Let $\T$ be a $2$-contractible $\ell$-overlapping  $k$-tree with $t$ edges and cross-cut number $\sigma$. Then, for every $\ve > 0$, there exists a $\delta >0$ such that the following holds. Let $\F$ be a $k$-graph on $[n]$ with $|\F|\geq (\sigma - 1 + \ve)\binom{n}{k - 1}$. Then, there is a collection $\C \subseteq 2^{\F}$ satisfying
\begin{enumerate}
\item For every $\T$-free subgraph $I \subseteq \F$, there is there is a $C \in \C$ such that $I \subseteq C$. 
\item $|\C| \leq \binom{\binom{n}{k}}{\leq \frac{t}{n^{k - \ell - 1}} \binom{n}{ k - 1} }$. 
\item For every $C \in \C$, $|V(C)| \leq (1 - \delta) |\F|$. 
\end{enumerate}\end{lemma}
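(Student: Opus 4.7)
The plan is to apply the container lemma (Lemma \ref{lem:containerlemma}) directly to a hypergraph of copies of $\HH$ obtained from the balanced supersaturation theorem (Theorem \ref{balancedsupersaturation}). Given $\F$ with $|\F| = a\binom{n}{k-1}$ and $a \geq \sigma - 1 + \ve$, I would first invoke Theorem \ref{balancedsupersaturation} to produce a collection $\K$ of copies of $\HH$ in $\F$ satisfying
\[
\Delta_b(\K) \;\leq\; c\,[\tau(\F)]^{b-1}\,\frac{|\K|}{|\F|} \qquad (1 \leq b \leq t),
\]
where $\tau(\F) = 1/(a n^{k-\ell-1})$ and $c = c(\ve,\HH)$. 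I would then view $\K$ as an abstract $t$-uniform hypergraph on vertex set $V(\K) = \F$: the edges of $\F$ are the vertices of $\K$, and each copy of $\HH$ in the collection becomes a $t$-element hyperedge of $\K$. Under this translation, every $\HH$-free subgraph of $\F$ avoids all copies of $\HH$ and in particular those recorded in $\K$, so it is an independent set in $\K$.

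Next, I would feed $\K$ into Lemma \ref{lem:containerlemma} with $N = |\F|$, $\tau = \tau(\F)$, and the same constant $c$. The hypothesis of the container lemma matches the degree bound above verbatim, so it yields a $\delta = \delta(c,t) > 0$ and a family $\C \subseteq 2^{\F}$ such that (i) every independent set of $\K$ lies in some $C \in \C$, (ii) $|\C| \leq \binom{|\F|}{\leq t\tau(\F)|\F|}$, and (iii) $|V(C)| \leq (1-\delta)|\F|$ for every $C \in \C$. Conditions (1) and (3) of the lemma then follow immediately from the identification $V(\K) = \F$ and the fact that $\HH$-free subgraphs are $\K$-independent.

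Finally, I would translate the counting bound into the stated form. A direct computation gives
\[
t\,\tau(\F)\,|\F| \;=\; t \cdot \frac{1}{a\,n^{k-\ell-1}} \cdot a\binom{n}{k-1} \;=\; \frac{t}{n^{k-\ell-1}}\binom{n}{k-1},
\]
and since $|\F| \leq \binom{n}{k}$ and $\binom{M}{\leq r}$ is monotone in $M$, one may replace $|\F|$ by $\binom{n}{k}$ in the base of the binomial, producing the target estimate
\[
|\C| \;\leq\; \binom{\binom{n}{k}}{\leq \frac{t}{n^{k-\ell-1}}\binom{n}{k-1}}.
\]
There is no real technical obstacle here: since all the heavy lifting — producing a balanced collection of $\HH$-copies at the Tur\'an threshold — has already been packaged into Theorem \ref{balancedsupersaturation}, this lemma is essentially a bookkeeping step that aligns the output of that theorem with the hypotheses of the container machinery.
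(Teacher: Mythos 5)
Your proof is correct and follows the same route as the paper: invoke Theorem \ref{balancedsupersaturation} to get the balanced collection $\K$ of copies of $\HH$, view $\K$ as a $t$-uniform hypergraph on $V(\K)=E(\F)$, apply Lemma \ref{lem:containerlemma} with $N=|\F|$ and $\tau=\tau(\F)$, and translate the conclusions back. The only cosmetic difference is that you explicitly spell out the computation $t\tau(\F)|\F| = \tfrac{t}{n^{k-\ell-1}}\binom{n}{k-1}$ and the monotonicity step replacing $|\F|$ by $\binom{n}{k}$ in the base of the binomial, which the paper leaves implicit.
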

\begin{proof}
Given $\F$, let $\K$ be the collection of copies of $\HH$ returned by Theorem~\ref{balancedsupersaturation}. 
We will view $\K$ as a $t$-uniform hypergraph with  $V(\K)=E(\F)$ whose hyperedges are the copies of $\HH$ in the collection.
By the conditon on $\K$, we have that for all $1\leq b\leq t$ 

$$\Delta_{b}(\K) \leq c \tau(\F)^{b - 1}\frac{|\K|}{|\F|}$$

where $\tau(\F) := \frac{1}{n^{k - \ell - 1}} \frac{\binom{n}{k - 1}}{|\F|}$.

Then, applying Lemma~\ref{lem:containerlemma},  there is a $\delta$ and a collection $\C \subseteq 2^{E(\F)}$ satisfying:
\begin{enumerate}
\item For every independent set $I \subseteq \Hc$, there is there is a $C \in \C$ such that $I \subseteq C$. 
\item $|\C| \leq \binom{n}{\leq \frac{t}{n^{k - \ell - 1}} \binom{n}{ k - 1} }$. 
\item For every $C \in \C$, $|V(C)| \leq (1 - \delta)|\F|$. 
\end{enumerate}
Noting that every $\T$-free subfamily of $\F$ corresponds to an independent set in $\K$, the result follows. 

\end{proof}

We now apply Lemma~\ref{container} repeatedly to derive the following lemma. 
\begin{lemma}\label{counting}
Let $k\geq 4$ be an integer.
Let $\HH$ be a $2$-contractible $\ell$-overlapping $k$-tree with $t$ edges and cross-cut number $\sigma$. Let $\ve > 0$. Then there exists a constant $K$ such that there is a collection $\C$ of size at most $$\exp\left(\frac{K \log^2(n)}{n^{k - \ell - 1}} \binom{n}{k - 1}\right)$$ subgraphs of $K_{n}^{(k)}$ such that for every $\HH$-free subgraph $I$ of $K_{n}^{(k)}$, there is a $C \in \C$ with $I \subseteq C$ and every $C \in \C$ has size at most $(\cross - 1 + \ve) \binom{n}{k - 1}$. 
\end{lemma}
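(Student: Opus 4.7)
The plan is to iterate Lemma~\ref{container} $O(\log n)$ times, starting from the complete hypergraph $K_n^{(k)}$ and refining the containers until every container has the desired size at most $(\sigma - 1 + \ve)\binom{n}{k-1}$.

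More precisely, initialize $\C_0 = \{K_n^{(k)}\}$. Given $\C_i$, partition it into ``small'' containers $\C_i^s := \{C \in \C_i : |C| \leq (\sigma - 1 + \ve)\binom{n}{k-1}\}$ and ``large'' containers $\C_i^\ell := \C_i \setminus \C_i^s$. For each large $C \in \C_i^\ell$, viewed as a $k$-graph on $[n]$ with $|C| \geq (\sigma - 1 + \ve)\binom{n}{k-1}$, apply Lemma~\ref{container} to produce a collection of containers $\C(C)$. Then set $\C_{i+1} := \C_i^s \cup \bigcup_{C \in \C_i^\ell} \C(C)$. By property (1) of Lemma~\ref{container}, every $\HH$-free subgraph of $K_n^{(k)}$ is still contained in some member of $\C_{i+1}$. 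By property (3), the size of each container in $\C_{i+1} \setminus \C_i^s$ is strictly smaller than the size of its parent container by a factor of at most $(1 - \delta)$. Iterating $M := \lceil \log_{1/(1-\delta)}(n) \rceil = O(\log n)$ many times, every container in $\C_M$ has size at most $(1 - \delta)^M \binom{n}{k} \leq (\sigma - 1 + \ve)\binom{n}{k-1}$, so $\C := \C_M$ consists entirely of small containers.

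To count $|\C|$, note that a single application of Lemma~\ref{container} to a large container creates at most
\[
\binom{\binom{n}{k}}{\leq \frac{t}{n^{k - \ell - 1}}\binom{n}{k - 1}} \leq \exp\!\left(\frac{K_0 \log n}{n^{k - \ell - 1}}\binom{n}{k - 1}\right)
\]
sub-containers, for some constant $K_0 = K_0(\HH, \ve)$, using the standard bound $\binom{N}{\leq m} \leq (eN/m)^m$ and that $N = \binom{n}{k}$. After $M$ rounds,
\[
|\C| \leq \exp\!\left(\frac{K_0 M \log n}{n^{k - \ell - 1}}\binom{n}{k - 1}\right) \leq \exp\!\left(\frac{K \log^2 n}{n^{k - \ell - 1}}\binom{n}{k - 1}\right)
\]
for $K = K(\HH, \ve)$ sufficiently large. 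Property (1) ensures coverage of all $\HH$-free subgraphs.

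The only delicate point is verifying that Lemma~\ref{container} is applicable at every stage, which reduces to checking that each large $C$ at each step is a $k$-graph on $[n]$ with at least $(\sigma - 1 + \ve)\binom{n}{k-1}$ edges; this is the very definition of being large, so it holds automatically. Once this iteration scheme is set up, the rest is bookkeeping of the two multiplicative factors (the $\log n$ from the number of iterations and the $\log n$ from the exponent in each application of the container lemma), which combine to yield the claimed $\log^2 n$ factor.
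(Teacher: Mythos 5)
Your proposal is correct and takes essentially the same approach as the paper's proof: both iterate Lemma~\ref{container} $O(\log n)$ times from $K_n^{(k)}$, shrinking dense containers by a $(1-\delta)$ factor per round, and bound $|\C|$ by multiplying the per-round branching factor $\exp\bigl(O(\log n / n^{k-\ell-1}) \binom{n}{k-1}\bigr)$ over $O(\log n)$ rounds. The paper packages the iteration as an auxiliary rooted tree whose leaves are the final containers, whereas you keep a flat collection $\C_i$ at each stage, but this is merely a presentational difference; your choice of $M$ may need a harmless constant-factor adjustment to guarantee $(1-\delta)^M \binom{n}{k} \leq (\sigma - 1 + \ve)\binom{n}{k-1}$ for small $\ve$, but this does not affect the $O(\log n)$ bound.
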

\begin{proof} 
Let $\delta := \delta(\ve, \T)$ be the constant given by Lemma~\ref{container} and choose $K$ large enough depending on $\ve, \delta, t$. Let $n$ be sufficiently large.

We will build a sequence of auxiliary rooted trees $\A_1, \dots, \A_m$ such that leaves of $\A_m$ will be our containers. We begin by fixing $\A_1$ to be the tree with a single vertex, the root $K_n^{(k)}$. To construct $\A_2$, we apply Lemma~\ref{container} to $K_n^{(k)}$ to find a collection $\C$ of subgraphs of $K_n^{(k)}$. We form $\A_2$ by adding each container $C \in \C$ as a vertex and connect it to the root $K_n^{(k)}$. Observe that in $\A_2$, $K_n^{(k)}$  has no more than $$\binom{\binom{n}{k}}{\frac{2t}{n^{k - \ell - 1} }\binom{n}{k - 1}}$$ children,  every $\HH$-free subgraph of $K_n^{(k)}$ is contained in one of the children, and every child has size no more than $(1 -   \delta) \binom{n}{k - 1}$.

To find $\A_{i + 1}$ from $\A_i$, we first check to see if any leaf has size greater than $(\cross - 1 + \ve) \binom{n}{k - 1}$. If not, we terminate the process then. For every leaf $\F$ of $\A_i$ which does have size greater than $( \cross - 1 + \ve) \binom{n}{k - 1}$
(we call such $\F$ {\it dense}), we apply Lemma~\ref{container} to find a collection $\C_\F \subseteq 2^{\F}$ such that the following holds:
\begin{enumerate}
\item For every $\T$-free subgraph $I \subseteq \F$, there is there is a $C \in \C_\F$ such that $I \subseteq C$. 
\item $|\C_\F| \leq \binom{n}{\frac{2t }{n^{k - \ell - 1}} \binom{n}{ k - 1} }$. 
\item For every $C \in \C_\F$, $|V(C)| \leq (1 - \delta)|\F|$. 
\end{enumerate}

Then, from $\A_i$, we add every $C$ in $\C_\F$ as a leaf below $\F$ for each dense $\F$ to form $\A_{i+1}$.
Since for every $\T$-free graph $I \subseteq \F$, there is some $C \in \C_\F$ such that $I \subseteq C$, we have that every $\T$-free graph contained in $\F$ is contained in some child of $\F$ in $\A_{i+1}$. Furthermore, by construction, we have that there are no more than
$$ \binom{\binom{n}{k}}{\frac{2t}{n^{k - \ell - 1} }\binom{n}{k - 1}}$$ children below each such $\F$.

We will terminate this process when every leaf has size less than $(\sigma - 1+ \ve) \binom{n}{k - 1}$. Let $\A_m$ be the final such tree. Observe that (3) implies the height of $\A_m$ is no more than $O(\log n )$, (1) implies that every $\HH$-free subgraph 
of $K^{(k)}_n$  is contained in some leaf of $\A_m$, and the height condition with (2) implies that the number of leaves of $\A_m$ is no more than $$\prod_{i = 1}^{m} \binom{\binom{n}{k}}{\frac{2t}{n^{k - \ell - 1} }\binom{n}{k - 1}} \leq \exp \left( \frac{K \log^2(n)}{n^{k - \ell - 1} } \binom{n}{ k -1} \right),$$

for some large enough constant $K$ depending on $\ve, t, k, \ell$.  

\end{proof}
 We are now ready to prove Theorem \ref{main} and Theorem \ref{random-tree}.

\begin{theorem}[Restatement of Theorem~\ref{main}]
Let $k \geq 4$ be an integer. Let $\HH$ be a $2$-contractible $k$-tree with cross-cut number $\sigma$. 
The number of $\HH$-free $k$-graphs on $[n]$ is at most $2^{(\cross - 1 + o(1))\binom{n}{ k- 1}}.$
\end{theorem}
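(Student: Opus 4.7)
The plan is to deduce the theorem almost immediately from Lemma \ref{counting} (the iterated container lemma), so the role of this proof is to turn its output into the desired counting bound.

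First I would verify that any $2$-contractible $k$-tree $\HH$ is $(k-2)$-overlapping, so that Lemma \ref{counting} is applicable. This is immediate from the definition: each edge $E$ of $\HH$ contains at least two vertices of degree $1$, which cannot appear in any other edge of $\HH$, so $|E\cap F|\leq k-2$ for every pair of distinct edges $E,F$. Thus we may invoke Lemma \ref{counting} with $\ell = k-2$, giving $k-\ell-1 = 1$.

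Next I would fix an arbitrary $\varepsilon>0$ and apply Lemma \ref{counting} to obtain a container family $\C\subseteq 2^{\binom{[n]}{k}}$ such that every $\HH$-free $k$-graph on $[n]$ is contained in some $C\in\C$, where
\[
|\C|\;\leq\;\exp\!\left(\frac{K\log^2(n)}{n}\binom{n}{k-1}\right)
\]
and $|C|\leq(\sigma-1+\varepsilon)\binom{n}{k-1}$ for every $C\in\C$. Since each $\HH$-free subgraph of $K_n^{(k)}$ is a subset of some $C\in\C$, and each $C$ has at most $2^{(\sigma-1+\varepsilon)\binom{n}{k-1}}$ subsets, a simple union bound yields
\[
\forb(n,\HH)\;\leq\;|\C|\cdot 2^{(\sigma-1+\varepsilon)\binom{n}{k-1}}
\;\leq\;2^{\left(\sigma-1+\varepsilon+\frac{K\log^2(n)}{n\log 2}\right)\binom{n}{k-1}}.
\]
For $n$ sufficiently large (depending on $\varepsilon$ and $\HH$), the $\frac{K\log^2(n)}{n\log 2}$ term is at most $\varepsilon$, giving $\forb(n,\HH)\leq 2^{(\sigma-1+2\varepsilon)\binom{n}{k-1}}$. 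Since $\varepsilon>0$ was arbitrary, this establishes the claimed bound $2^{(\sigma-1+o(1))\binom{n}{k-1}}$. The matching lower bound $2^{\ex(n,\HH)}$ from \eqref{hypergraph-trivial} combined with Theorem \ref{FJ-trees} completes the asymptotic equality, though strictly speaking the statement only asserts the upper bound.

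There is essentially no obstacle at this stage; all the real difficulty has been pushed into Lemma \ref{counting}, which in turn relies on the optimal balanced supersaturation of Theorem \ref{balancedsupersaturation} and the container lemma. The only subtlety to be careful about is the interplay between the exponent $\varepsilon$ coming from the container size and the error term $O(\log^2(n)/n)$ coming from the logarithm of $|\C|$: both must be made arbitrarily small, but in opposite orders (first fix $\varepsilon$, then take $n$ large), and this is where the usage of $k-\ell-1\geq 1$ is crucial, since it is what makes $\log^2(n)/n^{k-\ell-1}=o(1)$.
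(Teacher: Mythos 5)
Your argument is essentially identical to the paper's: both observe that a $2$-contractible $k$-tree is $(k-2)$-overlapping, invoke Lemma~\ref{counting} with $\ell=k-2$, bound $\forb(n,\HH)\leq|\C|\cdot 2^{(\sigma-1+\varepsilon)\binom{n}{k-1}}$, absorb $\log|\C|=o\bigl(\binom{n}{k-1}\bigr)$ for large $n$, and let $\varepsilon\to 0$. Your brief justification that degree-$1$ vertices force $|E\cap F|\leq k-2$ is a small extra detail the paper leaves implicit, and your note that the matching lower bound follows from \eqref{hypergraph-trivial} and Theorem~\ref{FJ-trees} correctly completes the asymptotic equality claimed in Theorem~\ref{main}.
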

\begin{proof}
Since $\HH$ is $2$-contractible, it is $(k - 2)$-overlapping. 
Fix some $\ve > 0$, and let $\C$ be the collection returned by Theorem~\ref{counting} with $\ell=k-2$. Then every $\HH$-free  $k$-graph
$I$ on $[n]$ is a subgraph of some $C$ in $\C$. The number of such $I$ is at most $|\C| 2^{(\cross - 1 + \ve)\binom{n}{ k- 1}}$. 

Taking $n$ sufficiently large, we have that $|\C| \leq 2^{\ve \binom{n}{k - 1}}$, and thus the number of 
$\HH$-free $k$-graphs on $[n]$ is at most $2^{(\cross - 1 + 2 \ve)\binom{n}{k - 1}}$. As this holds for all $\ve$, the result follows. 
\end{proof}

\begin{theorem}[Restatement of Theorem~\ref{random-tree}]
Let $k \geq 4$ be an integer. Let $\HH$ be a $2$-contractible $\ell$-overlapping $k$-tree with cross-cut number $\sigma$. Then if $p \gg \frac{\log ^2 (n)}{n^{ k -\ell - 1}}$, with high probability 
$\ex( G(n, p), \Hc) = (1 - o(1)) (\sigma - 1 + o(1)) \binom{n}{ k - 1}$.
\end{theorem}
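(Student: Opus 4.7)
The plan is to prove matching upper and lower bounds on $\ex(\gnpk, \HH)$, using Lemma \ref{counting} for the upper bound and Chernoff concentration for both directions. For the lower bound, I fix an $\HH$-free $k$-graph $\F_0$ on $[n]$ with $|\F_0| = (\sigma-1+o(1))\binom{n}{k-1}$ edges, which exists by Theorem \ref{FJ-trees}. Then $\F_0 \cap \gnpk$ is still $\HH$-free, has expected size $p|\F_0|$, and since $p|\F_0| \gg \log n$ under our hypothesis on $p$, a Chernoff bound gives $|\F_0 \cap \gnpk| \geq (1-o(1))p(\sigma-1+o(1))\binom{n}{k-1}$ with high probability. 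This yields the lower bound on $\ex(\gnpk, \HH)$.

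The bulk of the work is the upper bound. Fix any $\ve>0$ and apply Lemma \ref{counting} (with the overlap parameter $\ell$) to obtain a family $\C$ of containers, each of size at most $(\sigma-1+\ve)\binom{n}{k-1}$, such that every $\HH$-free subgraph of $K_n^{(k)}$ lies in some $C \in \C$, with
\[
|\C| \leq \exp\!\left(\frac{K\log^2(n)}{n^{k-\ell-1}}\binom{n}{k-1}\right).
\]
In particular, every $\HH$-free subgraph of $\gnpk$ is contained in $C \cap \gnpk$ for some $C \in \C$, so it suffices to bound $|C \cap \gnpk|$ uniformly over $\C$. For each fixed $C$, the random variable $|C \cap \gnpk|$ is binomial with mean at most $p(\sigma-1+\ve)\binom{n}{k-1}$, so a standard Chernoff bound gives
\[
\Pr\!\left[|C \cap \gnpk| > p(\sigma-1+2\ve)\binom{n}{k-1}\right] \leq \exp\!\left(-c(\ve)\,p\binom{n}{k-1}\right)
\]
for some constant $c(\ve)>0$.

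A union bound over $\C$ yields a failure probability at most
\[
\exp\!\left(\frac{K\log^2(n)}{n^{k-\ell-1}}\binom{n}{k-1} - c(\ve)\,p\binom{n}{k-1}\right),
\]
which tends to zero precisely because the hypothesis $p \gg \log^2(n)/n^{k-\ell-1}$ makes the negative term dominate. Hence with high probability every $\HH$-free subgraph of $\gnpk$ has at most $(\sigma-1+2\ve)p\binom{n}{k-1}$ edges. Letting $\ve \to 0$ matches the lower bound and finishes the proof.

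The main obstacle was not this deduction but Lemma \ref{counting} itself, which provides containers of sharp size $(\sigma-1+\ve)\binom{n}{k-1}$ at a cost of only $\exp(K\log^2(n)\binom{n}{k-1}/n^{k-\ell-1})$ containers; this is exactly the ratio that forces the threshold $p \gg \log^2(n)/n^{k-\ell-1}$, and it is where the balanced supersaturation theorem (Theorem \ref{balancedsupersaturation}) pays off. Once Lemma \ref{counting} is in hand, the remaining argument is a textbook container-plus-Chernoff application with no further technical subtlety.
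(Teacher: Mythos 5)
Your proposal is correct and follows essentially the same route as the paper: apply Lemma~\ref{counting} to get a small family of containers of the right size, use Chernoff to control $|C \cap \gnpk|$ for each container, and combine the $|\C|$-many applications (you via a union bound, the paper via Markov applied to the expected number of bad containers—these are the same computation). For the lower bound the paper writes down the explicit $\HH$-free construction of a $(\sigma-1)$-vertex ``cross-cut core'' and applies Chernoff to its intersection with $\gnpk$, whereas you invoke Theorem~\ref{FJ-trees} to get an $\HH$-free $\F_0$ of the right size and do the same; since the construction underlying Theorem~\ref{FJ-trees} is exactly that explicit example, the two are equivalent.
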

\begin{proof}
Fix some $\ve > 0$, and let $\C$ be the collection returned by Theorem~\ref{counting}. Then, let $X$ be the number of members $C \in \C$ such that $|C \cap G^{(k)}(n, p)| \geq (\cross - 1 + 2 \ve) p \binom{n}{k - 1}$. 

The probability that a specific member of $C \in \C$ satisfies $|C \cap G^{(k)}(n, p)| \geq (\cross - 1 + 2 \ve) p \binom{n}{k - 1}$ is at most $\exp\left( - \frac{\ve^2}{6 \sigma} p \binom{n}{k - 1}\right)$ by Chernoff's Inequality. 
Thus, the expected size $\mathbb{E}(X)$ of $X$ is at most 
$$\exp\left(\frac{K \log^2(n)}{n^{k - \ell - 1}} \binom{n}{k - 1}\right)  \exp\left( - \frac{\ve^2}{6 \sigma} p \binom{n}{k - 1}\right), $$

which goes to zero as $n \rightarrow \infty$ since $p \gg \frac{\log ^2 (n)}{n^{ k -\ell - 1}}$. Applying Markov's inequality, this implies that with high probability, $X = 0$. Hence,  with high probability, the largest $\T$-free subgraph of $G^{(k)}(n, p)$ is of size at most $(\cross - 1 +2 \ve)p \binom{n}{k - 1}$. As this holds for all $\ve$, the upper bound follows. 

A simple lower bound follows from the following: let $\Lc$ be the $k$-graph on $[n]$ vertices with the edge set $\{ \{i_1, i_2, \dots, i_k\} : i_i \in [\sigma - 1], \{i_2, \dots i_{k} \in \binom{[n] \setminus [\sigma]}{k - 1}\} $. 
Observe $\Lc$ contains no copy of $\Hc$ and furthermore has $(\sigma - 1) \binom{n - \sigma + 1}{k - 1}$ edges. 
Note that as $|G^{k}(n, p) \cap \Lc| \geq (\sigma - 1 - \ve)p \binom{n}{k -1}$ with probability $1 - \exp\left( - \frac{\ve^2}{6 \sigma} p \binom{n}{k - 1}\right)$ by Chernoff's Inequality, the lower bound follows.  
\end{proof}



\end{document}